\theoremstyle{plain}
\newtheorem{theorem}{Theorem}[section]
\newtheorem{lemma}[theorem]{Lemma}
\newtheorem{corollary}[theorem]{Corollary}
\newtheorem{proposition}[theorem]{Proposition}
\newtheorem{claim}[theorem]{Claim}
\newtheorem{problem}[theorem]{Problem}
\theoremstyle{definition}
\newtheorem{definition}[theorem]{Definition}
\newtheorem{exam}[theorem]{Example}
\theoremstyle{remark}
\newtheorem{remark}[theorem]{Remark}
\newtheorem*{remark*}{Remark}
\newcommand{\RR}{\mathbb{R}}
\newcommand{\CC}{{\mathbb C}}
\newcommand{\ical}{\mathcal{I}}
\newcommand{\lcal}{\mathcal{L}}
\newcommand{\scal}{\mathcal{S}}
\newcommand{\vcal}{\mathcal{V}}\newcommand{\ucal}{\mathcal{U}}
\newcommand{\zcal}{\mathcal{Z}}
\newcommand{\vol}{{\operatorname{Vol}}}
\def\Im{{\operatorname{Im}}}
\def\Re{{\operatorname{Re}}}
\def\Diff{{\operatorname{Diff}}}
\def\id{{\operatorname{id}}}
\def\path{\operatorname{Path}}
\def\Imm{\hbox{\rm Imm}}
\title{Moduli of special Lagrangians with boundary, I: Unobstructed Deformations}
\author{Vasanth Pidaparthy\thanks{The author would like to thank Y. A. Rubinstein for suggesting this problem and for his constant encouragement and support. Research supported in part by NSF grants DMS-1906370, 2204347, and BSF grants 2016173, 2020329.}}
\date{\today}
\begin{document}

\maketitle

\begin{abstract}
    This article studies the deformation problem for compact special Lagrangians with boundary in a Calabi--Yau manifold, with each boundary component constrained along a given Lagrangian submanifold. The tangent vectors generating such deformations are identified with harmonic 1-forms vanishing on the boundary of the special Lagrangian, and the deformation generated by any such tangent vector is unobstructed. Consequently, the moduli space of special Lagrangians with boundary is a smooth manifold whose dimension equals the dimension of the first relative cohomology of the special Lagrangian.
\end{abstract}

\section{Introduction}

This is the first of two articles that studies the moduli space of special Lagrangians with boundary (SLb) in a Calabi--Yau manifold $(X, \omega, J, \Omega)$, whose boundary is constrained along a given union of Lagrangian submanifolds (Definition~\ref{def:HWB-boundary-conditions-on-immersions-and-lagrangians}) \[\Lambda_1,\dots, \Lambda_d \subset X.\]

The goal of the present article is to characterize the tangent vectors generating paths of special Lagrangians with boundary. We show that all such tangent vectors generate paths of special Lagrangians with boundary, making the deformation problem unobstructed. Consequently, special Lagrangians with boundary form smooth finite-dimensional moduli spaces.

Special Lagrangian submanifolds were introduced by Harvey--Lawson as an example of minimal submanifold in a calibrated manifold \cite[{\S}III]{HarLaw-calibratedgeom}. Any smooth Lagrangian minimal submanifold is special Lagrangian \cite[Proposition III.2.17]{HarLaw-calibratedgeom}, and a closed special Lagrangians minimizes volume in its homology class \cite[Lemma II.3.5, Theorem III.1.10]{HarLaw-calibratedgeom}. Special Lagrangians have attracted considerable interest both as minimal submanifolds and canonical representatives of homology classes. Existence and moduli spaces of closed special Lagrangian submanifolds and special Lagrangian torii in a Calabi--Yau manifold have been widely studied in mirror symmetry and are the subject of major conjectures such as the Thomas--Yau conjectures on existence of special Lagrangians \cites{thomas-MR1882337, Thomas-Yau-MR1957663, Joyce-TY-conj-MR3354954, YangLi2022thomasyau, Solomon2013TheCH}, and the Strominger--Yau--Zaslow conjecture on the structure of the moduli space of Lagrangian torii \cite{SYZ-mirror-Tduality, YangLi_2022-syz-conjecture}.

A significant challenge in studying special Lagrangians is the scarcity of examples. Many known examples assume the Calabi--Yau has a large group of symmetries under which the special Lagrangian is invariant \cite{joyce-slag-m-folds-MR1932324, Solomon-yuval-MR3658152}. On the other hand, special Lagrangians with boundary are more likely to exist in Calabi--Yau manifolds. For example, $\CC^n$ admits no closed special Lagrangians since it admits no closed minimal submanifolds, but $\CC^n$ admits plenty of special Lagrangian gradient graphs with boundary \cite[Theorem 5]{caffarelli-nirenberg-spruck-dirichlet-problem-III-MR0806416}. With this motivation in mind, the goal of this article and its sequel is to study the moduli space of special Lagrangians with boundary. Two basic questions about special Lagrangians with boundary are:

\begin{problem}
\label{problem:HWB-1}
    What are natural boundary conditions to apply on special Lagrangians?
\end{problem}

\begin{problem}
\label{problem:HWB-2}
    Under what conditions does a Calabi--Yau manifold admit a special Lagrangian submanifold with boundary?
\end{problem}

Problem~\ref{problem:HWB-1} is open ended and has multiple proposed solutions. Problem~\ref{problem:HWB-1} goes back to the work of Butscher who studied special Lagrangians whose boundary is contained in a codimension two symplectic submanifold \cite{Butscher2002}. Solomon--Yuval revisited Problem~\ref{problem:HWB-1} and introduced a different family of Lagrangian boundary conditions \cite[Notation~2.7]{SolomonYuval2020-geodesics}, and this is the notion we will study in this article (Definition~\ref{def:HWB-boundary-conditions-on-immersions-and-lagrangians}).

Their main focus was on compact special Lagrangians \textit{cylinders}---manifolds of the form $N\times [0,1]$ for $N$ compact and closed---whose two boundary components are constrained to lie on a pair of given Lagrangian submanifold of $X$ (Definition \ref{def:HWB-boundary-conditions-on-immersions-and-lagrangians}). Their motivation came from a correspondence that they discovered between the moduli space of immersed special Lagrangian cylinders with their Lagrangian boundary conditions, and geodesics of closed embedded positive Lagrangians in $X$ \cite[Theorem 1.5]{SolomonYuval2020-geodesics}. They called this correspondence the \textit{cylindrical transform} and used it to prove openness of $C^{1,\alpha}$ geodesics \cite[Theorem 1.6]{SolomonYuval2020-geodesics}. The cylindrical transform also produced examples of moduli spaces of special Lagrangian cylinders of topological type $[0,1] \times S^{n-1}$ in Milnor fibers \cite{Solomon-yuval-MR3658152, SolomonYuval2020-geodesics}.

The solution to Problem~\ref{problem:HWB-2} is expected to depend on some form of algebro-geometric stability such as Bridgeland stability and its variants \cite{Thomas-Yau-MR1957663, bridgeland-stab-MR2373143, Joyce-TY-conj-MR3354954, YangLi2022thomasyau}. This article does not discuss Problem~\ref{problem:HWB-2}, but instead focuses on the properties of the moduli space of special Lagrangians with boundary, assuming they exist.

\begin{problem}
\label{problem:HWB-3.1}
    Is the deformation problem for special Lagrangians with boundary unobstructed? What are the generators of deformations of special Lagrangians with boundary? Is the moduli space finite-dimensional?
\end{problem}

Motivated by Harvey--Lawson's work on calibrated geometries \cite{HarLaw-calibratedgeom}, McLean studied deformations of calibrated submanifolds in general and special Lagrangians in particular. Problem~\ref{problem:HWB-3.1} was settled for \textit{closed} special Lagrangians \cite[{\S}3]{Mclean1998}.

\begin{theorem}
    {\rm\cite[Theorem~3.6]{Mclean1998}} Let $(X, \omega, J, \Omega)$ be a Calabi-Yau manifold and $L \subset X$ a compact special Lagrangian submanifold without boundary. Then the set of special Lagrangian submanifolds sufficiently close to $L$ is a finite-dimensional manifold parametrized by the set of harmonic 1-forms on $L$.
\end{theorem}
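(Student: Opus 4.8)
The plan is to realize nearby special Lagrangians as graphs over $L$ and reduce the special Lagrangian system to a single scalar elliptic equation, then invoke the implicit function theorem. First I would apply the Weinstein Lagrangian neighborhood theorem to fix a symplectomorphism of a tubular neighborhood of $L$ in $X$ onto a neighborhood of the zero section in $(T^*L, \omega_{\mathrm{can}})$, sending $L$ to the zero section, so that every submanifold $C^1$-close to $L$ becomes the graph $\Graph(\alpha)$ of a $1$-form $\alpha$ on $L$. Since the tautological form pulls back under $\alpha$ to $\alpha$ itself, $\Graph(\alpha)$ is Lagrangian if and only if $d\alpha = 0$; I therefore restrict attention to the Banach space $Z$ of closed $1$-forms of class $C^{k,\gamma}$ with small norm, on which the Lagrangian condition holds automatically. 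It then remains only to impose the special condition $\Im\Omega|_{\Graph(\alpha)} = 0$.

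Next I would define the special Lagrangian operator $F(\alpha) = \Im\Omega|_{\Graph(\alpha)}$, the pullback of $\Im\Omega$ to $L$ via the graph embedding, regarded as an $n$-form on $L$. Since $L$ is special Lagrangian, $F(0) = 0$; and because $\Graph(\alpha)$ is homologous to $L$ while $\Im\Omega$ is closed, $\int_L F(\alpha) = \int_{\Graph(\alpha)} \Im\Omega = 0$, so $F$ in fact takes values in the subspace $B$ of exact $n$-forms. The crux of the argument is the linearization $dF_0$. Writing a normal variation as the vector field $V_\beta$ determined by $\beta = \iota_{V_\beta}\omega|_L$, Cartan's formula together with $d\,\Im\Omega = 0$ gives $dF_0(\beta) = d\,\iota_{V_\beta}\Im\Omega|_L$, and the pointwise special Lagrangian linear algebra along $L$ yields the identity $\iota_{V_\beta}\Im\Omega|_L = *\beta$. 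Hence
\[
  dF_0(\beta) = d*\beta .
\]

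With this in hand the remainder is Hodge theory. On closed $1$-forms the Hodge decomposition gives $Z = \hcal^1 \oplus \{\, d\phi : \phi \in C^{k+1,\gamma}(L)\,\}$, where $\hcal^1$ is the finite-dimensional space of harmonic $1$-forms. The operator $d*$ annihilates $\hcal^1$ (as $*h$ is closed when $h$ is harmonic), while on the exact part $d*d\phi$ equals $(\Delta\phi)\,\vol$ up to sign. Thus $dF_0$ has kernel exactly $\hcal^1$ and restricts to a Banach space isomorphism from the exact $1$-forms onto $B$, since the scalar Laplacian is an isomorphism between mean-zero Hölder spaces. A finite-dimensional kernel is automatically complemented, so the implicit function theorem applies: the zero set $F^{-1}(0)$ near $0$ is the graph of a smooth map from a neighborhood $U \subset \hcal^1$ into the exact $1$-forms, hence a manifold of dimension $\dim\hcal^1 = b_1(L)$ parametrized by harmonic $1$-forms. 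Finally, elliptic regularity applied to the equation $F(\alpha)=0$ upgrades each solution to a smooth closed $1$-form, so the corresponding graphs are genuine smooth special Lagrangian submanifolds.

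I expect the main obstacle to be the linearization computation, namely establishing $\iota_{V_\beta}\Im\Omega|_L = *\beta$ and hence $dF_0 = d*$, since this is precisely the step where the special Lagrangian condition enters essentially and converts the geometric deformation problem into an elliptic one; once the linearized operator is identified as $d*$, its Fredholm and surjectivity properties follow from standard Hodge theory. A secondary technical point is the choice of function spaces, which must be arranged so that $F$ is a smooth map of Banach manifolds, $dF_0$ is surjective onto $B$ with complemented kernel, and elliptic regularity delivers smoothness of the resulting submanifolds.
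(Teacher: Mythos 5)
Your proof is correct, and it is essentially McLean's original argument; the paper itself only quotes this theorem without proof, so the fair comparison is with the paper's proof of its boundary generalization (Proposition~\ref{prop:HWB-ellipticity-of-SLAG-operator} and Theorem~\ref{thm:HWB-tangentspacetospeciallagrangianswithboundary-2}), which follows a parallel but not identical route. You use the Weinstein neighborhood theorem to identify nearby submanifolds with graphs of $1$-forms, so the Lagrangian condition becomes exactly $d\alpha=0$ and you may restrict the domain to closed forms from the outset; the special Lagrangian operator is then the scalar-valued $F(\alpha)=\Im\Omega|_{\Graph(\alpha)}$ with linearization $d\star\beta$, and surjectivity onto exact $n$-forms reduces to invertibility of the scalar Laplacian on mean-zero functions. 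The paper instead parametrizes deformations by the exponential map of an auxiliary connection applied to sections of a rotated bundle $E=(a+J)df(TL)$ (a device needed to respect the Lagrangian boundary constraints, and which, unlike the Weinstein chart, does not automatically produce Lagrangian graphs); consequently it must carry the Lagrangian condition inside the operator, taking $F(\theta)=f_\theta^*\omega\oplus f_\theta^*\Im(\Omega)$ with values in $dA^1_D(L)\oplus dA^{n-1}(L)$ and linearization $d\theta\oplus d\star_0\theta$, whose kernel is the space of harmonic fields. In the closed case the two setups coincide in substance: your key identity $i_{V_\beta}\Im\Omega|_L=\star\beta$ is exactly the paper's Lemma~\ref{lemma:HWB-phitishodgestarofthetat}, and your Hodge-theoretic step is the closed-manifold specialization of Corollary~\ref{cor:harmonicfieldsvanishingontheboundaryandcohomology}. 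Your approach buys a simpler target space and a purely scalar elliptic problem; the paper's buys uniform applicability to the boundary-constrained setting where the Weinstein normal form is not adapted to the constraints. One minor point worth making explicit in your write-up is the sign/orientation convention in the correspondence $\beta\mapsto V_\beta$ under the Weinstein identification, which affects only the sign in $dF_0=d\star$ and nothing else.
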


Butscher studied Problem~\ref{problem:HWB-3.1} for compact embedded special Lagrangians with boundary $L\subset X$ whose boundary is constrained along a given codimension two symplectic submanifold. He proved the moduli space is a finite-dimensional manifold parametrized by harmonic 1-forms on $L$ satisfying a Neumann boundary condition. Solomon--Yuval studied Problem~\ref{problem:HWB-3.1} for immersed special Lagrangian cylinders satisfying their Lagrangian boundary conditions, i.e., the case $d=2$ with $L=[0,1]\times N$ \cite[Proposition 4.7]{SolomonYuval2020-geodesics}. In this case, the moduli space is 1-dimensional and is parametrized by harmonic 1-forms satisfying Dirichlet boundary conditions.

\begin{theorem}
\label{thm:HWB-solomon-yuval-theorem}
    {\rm\cite[Proposition 4.7]{SolomonYuval2020-geodesics}} Let $\Lambda_0, \Lambda_1$ be Lagrangian submanifolds of a Calabi--Yau manifold $(X,\omega, J, \Omega)$. Suppose $f: N\times[0,1] \rightarrow X$ is a special Lagrangian immersion for a closed connected smooth $(n-1)$-manifold $N$ satisfying the boundary conditions in Definition~\ref{def:HWB-boundary-conditions-on-immersions-and-lagrangians}.1. Then the set of special Lagrangian immersions sufficiently close to $f$ (up to reparametrization) form a 1-dimensional manifold locally parametrized by harmonic 1-forms on $N\times [0,1]$ vanishing on the boundary.
\end{theorem}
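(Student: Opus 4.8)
The plan is to adapt McLean's deformation argument to the boundary setting in three steps: identify the infinitesimal deformations with a space of harmonic $1$-forms, show that this space is one-dimensional, and prove unobstructedness by an implicit function theorem argument resting on Hodge--Morrey--Friedrichs theory for manifolds with boundary. Throughout I write $L = N\times[0,1]$ and work with immersions up to reparametrization, which lets me restrict to \emph{normal} vector fields $V$ along $f$, fixing the gauge and discarding the tangential (diffeomorphism) directions. Since $f(L)$ is Lagrangian its normal bundle is identified with $T^*L$ through $V\mapsto\alpha:=\iota_V\omega$, and the Calabi--Yau structure yields McLean's identity $\iota_V\Im\Omega|_L = *\alpha$ (up to sign), with $*$ the Hodge star of the induced metric. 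Linearizing the defining conditions $\omega|_L=0$ and $\Im\Omega|_L=0$ along the flow of $V$ shows that $V$ generates a path of Lagrangian immersions to first order iff $d\alpha=0$, and a path of special Lagrangian immersions to first order iff $d*\alpha=0$; so the infinitesimal deformations are precisely the harmonic $1$-forms $\alpha$.

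Next I pin down the boundary condition. The requirement that each boundary component stay on its $\Lambda_j$ forces $V|_{\partial L}\in T\Lambda_j$. Because $\Lambda_j$ is Lagrangian, $\iota_{\partial L}^*\alpha(u)=\omega(V,u)=0$ for all $u\in T(\partial L)\subset T\Lambda_j$, so $\iota_{\partial L}^*\alpha=0$; conversely, a pointwise linear-algebra check at a boundary point---using that $T\Lambda_j\cap J(T_pL)$ is the line spanned by $J$ applied to the outward unit conormal---shows $\iota_{\partial L}^*\alpha=0$ is equivalent to $V|_{\partial L}\in T\Lambda_j$. Hence the infinitesimal deformations are exactly the Dirichlet harmonic fields $\hcal^1_D(L)=\{\alpha: d\alpha=0,\ d^*\alpha=0,\ \iota_{\partial L}^*\alpha=0\}$, i.e. the harmonic $1$-forms vanishing on $\partial L$ in the stated sense. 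By the Friedrichs isomorphism $\hcal^1_D(L)\cong H^1(L,\partial L)$, and the relative Künneth formula gives $H^1\!\left(N\times[0,1],\,N\times\{0,1\}\right)\cong H^0(N)\otimes H^1([0,1],\{0,1\})\cong\RR$ because $N$ is connected. Thus the deformation space is one-dimensional.

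To promote this to a genuine moduli manifold I would parametrize the Lagrangian immersions near $f$ respecting the boundary conditions by closed $1$-forms $\alpha$ with $\iota_{\partial L}^*\alpha=0$---a Banach submanifold, in suitable Hölder spaces, of the closed forms, cut out by the nonlinear condition that the graph's boundary lie on the $\Lambda_j$, with tangent space $\hcal^1_D\oplus d\{f:f|_{\partial L}=0\}$ at $0$ by Friedrichs---and study $P(\alpha)=f_\alpha^*\,\Im\Omega\in\Omega^n(L)$, whose zero set is the special Lagrangians. Its linearization at $0$ is $\alpha\mapsto d*\alpha$; writing $\alpha=df+h$ with $h\in\hcal^1_D$ and $f|_{\partial L}=0$, this equals $(\Delta f)\,\mathrm{vol}$, so the linearization reduces to the \emph{Dirichlet} Laplacian on functions. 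Unlike the closed case, this Laplacian is invertible, so the linearization is surjective onto $\Omega^n(L)$ with kernel exactly $\hcal^1_D(L)$ and no cokernel---there is no period or flux obstruction, since $\int_{L_\alpha}\Im\Omega$ is genuinely free to vary as the boundary sweeps inside the $\Lambda_j$, where $\Im\Omega$ need not vanish. The implicit function theorem then exhibits the special Lagrangians near $f$ as a smooth manifold modeled on $\hcal^1_D(L)$, of dimension $\dim H^1(L,\partial L)=1$.

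The hardest part is the analytic-geometric setup underlying the last step: constructing a Weinstein-type neighborhood adapted to the moving boundary so that boundary-respecting Lagrangian immersions are faithfully parametrized by closed forms with $\iota_{\partial L}^*\alpha=0$; verifying that $P$ is a smooth map of Hölder spaces whose boundary conditions satisfy the Lopatinski--Shapiro ellipticity criterion; and establishing elliptic regularity up to $\partial L$, so that the implicit-function solutions are genuine smooth special Lagrangian immersions rather than merely Hölder sections.
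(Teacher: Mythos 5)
Your overall strategy is the same as the paper's (and Solomon--Yuval's): identify infinitesimal deformations with Dirichlet harmonic fields, compute $\dim H^1(L,\partial L;\RR)=1$ for the cylinder, and get unobstructedness from the implicit function theorem by splitting a closed Dirichlet $1$-form as $\alpha=df+h$ and reducing the linearization to the invertible Dirichlet Laplacian on functions; this is exactly the reduction the paper attributes to the cylinder case and reproduces in Claim~\ref{claim:HWB-elliptic-regularity}. The Hodge-theoretic and cohomological parts of your argument are correct.

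There is, however, one concrete error, and it sits precisely at the point the paper spends the most effort on. You gauge-fix to \emph{normal} vector fields and assert that $T_{f(p)}\Lambda_j\cap J\,df(T_pL)$ is the line spanned by $J$ applied to the unit conormal. This is false in general: writing $T_{f(p)}\Lambda_j=\mathrm{span}(w_1,\dots,w_{n-1},(c_n+d_nJ)w_n)$ as in Claim~\ref{claim:HWB-boundary-choice-of-vector-bundle-a}, the intersection with $J\,df(T_pL)$ is $\{0\}$ whenever $c_n\neq 0$, i.e.\ whenever $\Lambda_j$ does not meet $f(L)$ orthogonally. Consequently a normal field tangent to $\Lambda_j$ along $\partial L$ is forced to vanish there (not merely to have vanishing pullback), so the normal gauge undercounts the admissible deformations, and the Riemannian exponential of a normal field pushes the boundary off $\Lambda_j$ (this is the failure depicted in Figure~\ref{fig:HWB-normal-vector-field-vs-boundary-cond}). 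The paper's fix is to replace $J\,df(TL)$ by the rotated Lagrangian complement $E=(a+J)df(TL)$, with $a|_{\partial L}$ uniquely determined so that $\dim E_p\cap T_{f(p)}\Lambda_j=1$, and to replace the Riemannian exponential by that of a connection $\nabla^\Lambda$ for which the $\Lambda_j$ are totally geodesic (Propositions~\ref{prop:HWB-boundary-Lagrangian-vector-bundle-connection-2} and~\ref{prop:HWB-boundary-Lagrangian-vector-bundle-connection}, Corollary~\ref{cor:HWB-boundary-lagrangian-deformation-space}). You correctly flag the adapted Weinstein neighborhood as the hardest step, but as written your pointwise linear algebra would derail its construction; the equivalence ``$\iota_{\partial L}^*\alpha=0$ iff the generating field is tangent to $\Lambda_j$'' holds for sections of $E$, not of the normal bundle. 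A secondary, more cosmetic difference: the paper avoids your nonlinear ``closed Dirichlet forms parametrize boundary-respecting Lagrangians'' step by parametrizing \emph{all} nearby immersions by $A^1_D(L)$ and imposing both $f_\theta^*\omega=0$ and $f_\theta^*\Im(\Omega)=0$ in a single operator whose image lands in $dA^1_D(L)\oplus dA^{n-1}(L)$ by Cartan's formula.
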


\section{Results}
\label{section:HWB-results}

This article resolves Problem~\ref{problem:HWB-3.1} in full generality for SLb satisfying the Lagrangian boundary conditions introduced by Solomon--Yuval (Definition~\ref{def:HWB-boundary-conditions-on-immersions-and-lagrangians}). Inspired by Solomon--Yuval \cite{SolomonYuval2020-geodesics}, Problem~\ref{problem:HWB-3.1} is studied for \textit{immersed} SLb. Problem~\ref{problem:HWB-3.1} is answered by Proposition~\ref{prop:HWB-tangentspacetospeciallagrangianswithboundary-1} and Theorem~\ref{thm:HWB-tangentspacetospeciallagrangianswithboundary-2}.

Let $(X,\omega, J, \Omega)$ be a Calabi--Yau manifold (Definition~\ref{def:HWB-calabi-Yau-manifold}) of complex dimension $n$ and let $L$ be an $n$-dimensional real compact manifold with smooth boundary and $d$ boundary components $C_1,\dots, C_d$. Fix disjoint embedded Lagrangian submanifolds $\Lambda_1,\dots, \Lambda_d \subset X$. The moduli space of immersed special Lagrangian submanifolds of $X$ diffeomorphic to $L$ with the boundary component $C_i$ lying on $\Lambda_i$ is denoted by (Definition~\ref{def:HWB-boundary-conditions-on-immersions-and-lagrangians})
\[\scal\lcal := \scal\lcal(X,L; \Lambda_1,\dots, \Lambda_d).\]

First we observe that the tangent 1-form associated to a path of special Lagrangians in $\scal\lcal$ (Lemma~\ref{lemma:HWB-formula-for-derivative-of-path-of-Lagrangians}) is harmonic and vanishes on the boundary.

\begin{proposition}
\label{prop:HWB-tangentspacetospeciallagrangianswithboundary-1}
Fix a compact manifold $L$ with boundary components $C_1,\dots, C_d$ and disjoint Lagrangian submanifold $\Lambda_1,\dots, \Lambda_d \subset X$. Consider a smooth path of special Lagrangian immersions
\[f: [0,1] \times L \rightarrow X, \quad f\in C^\infty,\quad f_t:=f(t,\cdot),\]
satisfying boundary conditions $\Lambda_1,\dots, \Lambda_d$ (Definition~\ref{def:HWB-boundary-conditions-on-immersions-and-lagrangians}). Let $\star_t $ denote the Hodge star for the pullback Riemannian metric $g_t = f^*g$ on $L$, and let
\begin{equation}
\label{eq:HWB-thm-1-theta-phi-notation}
    \begin{aligned}
    \theta_t:=&\ f_t^* \left( i_{\frac{df_t}{dt}}\omega \right) \\ 
    \phi_t :=&\ f_t^* \left( i_{\frac{df_t}{dt}} \Im(\Omega) \right) . 
\end{aligned} \qquad t\in [0,1],
\end{equation}
where $i_V\omega := \omega(V,\cdot)$ is the interior derivative. Then the following hold for each $t\in[0,1]$.
\begin{enumerate}[label={\arabic*.}]
    \item $\theta_t$ and $\phi_t$ are closed differential forms on $L$, and $\theta_t|_{\partial L} \equiv 0$.
    \item $ \star_t \theta_t = \phi_t$.
\end{enumerate}
In particular $\theta_t$ and $\phi_t$ are closed and co-closed with respect to the pullback metric $f_t^* g$, and are harmonic. Furthermore $\phi_t$ need not vanish on $\partial L$ and in fact satisfies Neumann boundary conditions by Lemma~\ref{def:boundarydecompositionofformsontheboundary}.
\end{proposition}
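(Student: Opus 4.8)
The plan is to split the statement into the two assertions, proving closedness and the boundary vanishing of $\theta_t$ by a first-variation argument, and proving the Hodge duality $\star_t\theta_t=\phi_t$ by pointwise linear algebra on the special Lagrangian tangent plane.

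For the first part, I would start from the Cartan-type first-variation formula (presumably the content of Lemma~\ref{lemma:HWB-formula-for-derivative-of-path-of-Lagrangians}): writing $V=\frac{df_t}{dt}$ for the variation field and $\alpha$ for a closed form on $X$, one has $\frac{\partial}{\partial t}f_t^*\alpha = f_t^*(i_V d\alpha)+d\bigl(f_t^*(i_V\alpha)\bigr)=d\bigl(f_t^*(i_V\alpha)\bigr)$ since $d\alpha=0$. Applying this with $\alpha=\omega$ and with $\alpha=\Im(\Omega)$ (both closed, the latter because $\Omega$ is the holomorphic volume form) and using that $f_t$ is special Lagrangian for every $t$, so that $f_t^*\omega\equiv 0$ and $f_t^*\Im(\Omega)\equiv 0$, the left-hand sides vanish identically in $t$. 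Hence $d\theta_t=0$ and $d\phi_t=0$. For the boundary vanishing I would invoke the boundary conditions (Definition~\ref{def:HWB-boundary-conditions-on-immersions-and-lagrangians}): since $f_t(C_i)\subset\Lambda_i$ for all $t$, the variation $V$ is tangent to $\Lambda_i$ along $C_i$, and $f_{t*}$ carries $TC_i$ into $T\Lambda_i$; thus for $W$ tangent to $\partial L$ we get $\theta_t(W)=\omega(V,f_{t*}W)=0$ because $\Lambda_i$ is Lagrangian, giving $\theta_t|_{\partial L}\equiv 0$.

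For the second part, I would first observe that only the normal component of $V$ matters: decomposing $V=V^\top+V^\perp$ with $V^\top=f_{t*}U$ tangent to $f_t(L)$, the Lagrangian condition $f_t^*\omega=0$ kills the $V^\top$-contribution to $\theta_t$ and the special condition $f_t^*\Im(\Omega)=0$ kills it in $\phi_t$, so both $\theta_t$ and $\phi_t$ depend only on $V^\perp$. Identifying the normal bundle with $TL$ via $J$, so that $V^\perp=Jf_{t*}U$ for a tangent field $U$, I would then choose at each point a unitary frame adapted to the special Lagrangian plane: an oriented $g_t$-orthonormal frame $e_1,\dots,e_n$ of $T_pL$ with $Je_1,\dots,Je_n$ spanning the normal space, modeling the tangent plane on the standard $\RR^n\subset\CC^n$ so that $\Re(\Omega)|_L$ is the volume form. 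A direct computation in this model yields $i_{V^\perp}\omega|_L=\pm\,U^\flat$ and $\Im(i_{V^\perp}\Omega)|_L=\pm\,i_U\mathrm{vol}_{g_t}=\pm\,\star_t U^\flat$, whence $\star_t\theta_t=\phi_t$. Harmonicity of $\theta_t$ (and of $\phi_t=\star_t\theta_t$) is then immediate from being closed and co-closed, and the Neumann condition for $\phi_t$ follows from $\phi_t=\star_t\theta_t$ together with the Dirichlet condition on $\theta_t$ via the boundary decomposition of Lemma~\ref{def:boundarydecompositionofformsontheboundary}.

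The main obstacle is the pointwise linear-algebra step: it is exactly here that the \emph{special} Lagrangian (calibration) hypothesis $\Re(\Omega)|_L=\mathrm{vol}_{g_t}$, and not merely the Lagrangian condition, is needed to pin down the adapted unitary frame and to force $i_V\omega|_L$ and $\Im(i_V\Omega)|_L$ to be Hodge-dual rather than merely both nonzero. Matching signs and orientations against the Calabi--Yau conventions of Definition~\ref{def:HWB-calabi-Yau-manifold} is the one delicate bookkeeping point; once the identity is verified at a single point in the model frame it propagates over $L$ by naturality of pullback and the Hodge star.
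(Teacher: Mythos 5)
Your proposal is correct and follows essentially the same route as the paper: part 1 is exactly the Cartan-formula first-variation argument of Lemma~\ref{lemma:HWB-formula-for-derivative-of-path-of-Lagrangians} and Lemma~\ref{lemma:HWB-phit-is-closed} (closedness from $d\alpha=0$ and $f_t^*\alpha\equiv 0$, boundary vanishing from $V$ and $f_{t*}(TC_i)$ both lying in the Lagrangian $T\Lambda_i$), and part 2 rests on the same two pillars as Lemma~\ref{lemma:HWB-phitishodgestarofthetat}, namely that only the normal component $Jf_{t*}U$ of the variation contributes to $\theta_t$ and $\phi_t$, and that the Harvey--Lawson calibration $f_t^*\Re(\Omega)=\vol_{g_t}$ converts $i_U\vol_{g_t}$ into $\star_t\theta_t$. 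The only cosmetic difference is that the paper carries out the Hodge-duality step invariantly, via the metric dual $\xi_0$ of $\theta_0$ and the identity $i_{JY}\Im(\Omega)=i_Y\Re(\Omega)$, where you verify the same identities pointwise in an adapted unitary frame modeled on $\RR^n\subset\CC^n$.
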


Problem~\ref{problem:HWB-3.1} is resolved by a careful application of the implicit function theorem and Proposition~\ref{prop:HWB-tangentspacetospeciallagrangianswithboundary-1}:

\begin{theorem}
\label{thm:HWB-tangentspacetospeciallagrangianswithboundary-2}
    Under the assumptions of Proposition~\ref{prop:HWB-tangentspacetospeciallagrangianswithboundary-1}, $\scal \lcal(X,L; \Lambda_1,\dots, \Lambda_d)$ is a finite-dimensional manifold whose tangent space at an immersed special Lagrangian is isomorphic to the space of closed and co-closed 1-forms on $L$ vanishing at the boundary. In particular $\scal \lcal$ is a manifold satisfying $\dim \scal\lcal = \dim H^1(L,\partial L; \RR) = \dim H^{n-1}(L; \RR)$.
\end{theorem}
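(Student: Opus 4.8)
The plan is to present $\scal\lcal$ locally as the zero set of a smooth map between Banach spaces of sections and to invoke the implicit function theorem, adapting McLean's argument so that the Lagrangian boundary constraint is carried through the analysis. First I would fix an immersed special Lagrangian $f$ representing a point of $\scal\lcal$ and set up a normal parametrization of nearby immersions. Using a Weinstein-type tubular neighborhood of $f(L)$ chosen compatibly with the boundary Lagrangians $\Lambda_i$, I identify immersions sufficiently $C^1$-close to $f$, modulo reparametrization, with $1$-forms $\alpha$ on $L$ (the normal field being $\omega$-dual to $\alpha$); passing to normal deformations is exactly the slice that removes the reparametrization gauge. The constraint that the $i$-th boundary component stays on $\Lambda_i$ should translate, once the neighborhood is chosen so that each $\Lambda_i$ is straightened, into the relative boundary condition $\iota_{\partial L}^*\alpha = 0$, i.e.\ the vanishing of the tangential part of $\alpha$ on $\partial L$ --- precisely the behaviour forced on tangent vectors by Proposition~\ref{prop:HWB-tangentspacetospeciallagrangianswithboundary-1}.

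Second, I would define the deformation map. The graph $L_\alpha$ is special Lagrangian exactly when $f_\alpha^*\omega = 0$ and $f_\alpha^*\Im(\Omega) = 0$. In the Weinstein model $f_\alpha^*\omega = d\alpha$, so the Lagrangian condition restricts attention to the Banach subspace of closed forms; on this locus I consider
\[ \Phi(\alpha) = f_\alpha^*\Im(\Omega) \in \Omega^n(L), \]
whose zero set is the local moduli space. Since $d\,\Im(\Omega) = 0$ the form $\Phi(\alpha)$ is always closed, hence exact because $\partial L \neq \emptyset$ forces $H^n(L;\RR) = 0$, so $\Phi$ may be regarded as mapping into the space of exact $n$-forms.

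Third, I compute the linearization at $\alpha = 0$. The computation underlying Proposition~\ref{prop:HWB-tangentspacetospeciallagrangianswithboundary-1}, in particular the identity $\star\theta = \phi$ together with $\tfrac{d}{dt} f_t^*\Im(\Omega) = d\phi$, gives $D\Phi|_0(\alpha) = d\star\alpha$. Hence the kernel, among closed forms satisfying $\iota_{\partial L}^*\alpha = 0$, is the space of $1$-forms with $d\alpha = 0$ and $\delta\alpha = 0$, i.e.\ harmonic fields with the relative (Dirichlet) boundary condition. The first-order system $\alpha \mapsto (d\alpha, \delta\alpha)$ with this boundary condition is an elliptic boundary value problem in the sense of Lopatinski--Shapiro, so the kernel is finite-dimensional; by the Hodge--Morrey--Friedrichs decomposition the relative harmonic fields are isomorphic to $H^1(L,\partial L;\RR)$, and Poincar\'e--Lefschetz duality gives $H^1(L,\partial L;\RR) \cong H^{n-1}(L;\RR)$, which is the asserted dimension.

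The main obstacle is unobstructedness, namely surjectivity of $D\Phi|_0$ onto the exact $n$-forms, which is what permits the implicit function theorem to produce a smooth manifold of dimension $\dim\ker D\Phi|_0$. I would establish this through the elliptic theory of the self-adjoint boundary value problem together with a careful integration by parts: any $n$-form $L^2$-orthogonal to the image corresponds to a harmonic field satisfying the complementary (Neumann/absolute) boundary condition, and one must verify that the boundary terms generated by the constraint $\iota_{\partial L}^*\alpha = 0$ force such a field to vanish, so that no cohomological obstruction survives. Controlling these boundary contributions --- and checking that $\Phi$ is genuinely smooth between the chosen H\"older spaces and that the normal slice has removed the reparametrization degeneracy --- is the delicate part of the argument; once surjectivity is in hand, the implicit function theorem yields that $\scal\lcal$ is a finite-dimensional manifold whose tangent space is the stated space of harmonic $1$-forms, completing the proof.
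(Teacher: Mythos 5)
Your proposal is correct and follows essentially the same route as the paper: parametrize nearby immersions satisfying the boundary conditions by $1$-forms in $A^1_D(L)$, apply the implicit function theorem to the special Lagrangian operator whose linearization at the origin is $\theta \mapsto d\theta \oplus d\star\theta$, identify the kernel with Dirichlet harmonic fields, and conclude via the Hodge decomposition and Poincar\'e--Lefschetz duality. The only notable cosmetic differences are that the paper uses an exponential-map parametrization (via the bundle $E=(a+J)df(TL)$ and a connection making the $\Lambda_i$ totally geodesic) rather than a boundary-adapted Weinstein model, so it keeps $f_\theta^*\omega$ as a component of the operator instead of restricting to closed forms, and it proves surjectivity of the linearization concretely by solving the scalar Dirichlet problem $\Delta f = \star d\phi - \star d\star\theta$, $f|_{\partial L}=0$, rather than by your $L^2$-orthogonality argument.
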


The proofs of Proposition~\ref{prop:HWB-tangentspacetospeciallagrangianswithboundary-1} and Theorem~\ref{thm:HWB-tangentspacetospeciallagrangianswithboundary-2} combine the ideas of Mclean \cite[{\S}3]{Mclean1998} and Solomon--Yuval \cite[{\S}4.1]{SolomonYuval2020-geodesics}. Solomon--Yuval considered the special case when $L = [0,1]\times N$ is a cylinder and $d=2$. In this case, $H^1(L,\partial L;\RR) \cong \RR$ and closed 1-forms vanishing on the boundary are exact \cite[Lemma~4.3]{SolomonYuval2020-geodesics}, and Theorem~\ref{thm:HWB-tangentspacetospeciallagrangianswithboundary-2} results from an implicit function theorem argument for the Laplace operator on functions. When $L$ is not a cylinder, the argument proceeds similarly by applying the implicit function theorem for the Laplace--Beltrami operator on 1-forms. 

McLean's argument dealt only with embedded manifolds. Most of the argument generalizes easily from embeddings to immersions. The only subtle point is that a parametrization of an open subset of immersions with Lagrangian boundary conditions needs more care than for embeddings. The argument is a generalization of Cervera--Mascar\'o--Michor \cite[Theorem 1.5]{Cervera-Mascaro-MichorMR1244452} to manifolds with boundary. The generalized version is briefly discussed in Solomon--Yuval \cite[Corollary~2.10]{SolomonYuval2020-geodesics}.  The complete argument is provided in Corollary~\ref{cor:HWB-boundary-lagrangian-deformation-space} for the reader's reference.

\textbf{Organization.} Section~\ref{sec:HWB-preliminaries} sets up the notation and definitions related the moduli of immersed Lagrangians and special Lagrangians following Akveld--Salamon \cite{Akveld2001} and Solomon--Yuval \cite{SolomonYuval2020-geodesics}. Hodge theory on manifolds with boundary is also recalled in this section. Section~\ref{sec:HWB-moduli-space-of-special-lagrangians} studies the elliptic PDE governing SLb, proving Proposition~\ref{prop:HWB-tangentspacetospeciallagrangianswithboundary-1} and Theorem~\ref{thm:HWB-tangentspacetospeciallagrangianswithboundary-2}. Section~\ref{sec:HWB-generalizations} briefly discusses generalizations to almost Calabi--Yau manifolds.

\section{Preliminaries}
\label{sec:HWB-preliminaries}

Section~\ref{sec:modulispaceoflagrangianswithboundary} recalls the symplectic geometry of immersed submanifolds with boundary in a Calabi--Yau manifold, following the conventions in Solomon--Yuval \cite[{\S}2]{SolomonYuval2020-geodesics}. The Lagrangian boundary conditions defining the moduli space of immersed Lagrangians and special Lagrangians are recalled in Definitions \ref{def:HWB-boundary-conditions-on-immersions-and-lagrangians} and \ref{def:HWB-moduli-space-of-special-lagrangians}. Section~\ref{subsec:HWB-weinstein-neighbrohood-with-boundary} recalls a parametrization of immersions in a small neighborhood of a given free Lagrangian immersion in terms of 1-forms on the Lagrangian \cite[Lemma 2.8]{SolomonYuval2020-geodesics}. The parametrization is constructed in Corollary~\ref{cor:HWB-boundary-lagrangian-deformation-space} and is the domain of the elliptic special Lagrangian operator. Section~\ref{subsec:HWB-smooth-paths-and-tangents} recalls the notion of a smooth path of immersed SLb, and the associated tangent 1-form introduced by Akveld--Salamon \cite[{\S}2]{Akveld2001}. Section~\ref{subsec:hodgedecomposition} recalls Hodge theory for manifolds with boundary.

\subsection{Special Lagrangians and Lagrangian boundary conditions}
\label{sec:modulispaceoflagrangianswithboundary}

\begin{definition}
    \label{def:HWB-calabi-Yau-manifold}
    A quadruple $(X,\omega,J,\Omega)$ is called a \textit{Calabi--Yau manifold} if $(X,J)$ is a K\"ahler manifold equipped with a Ricci flat K\"ahler metric $\omega$ and a holomorphic top form $\Omega$ of constant length. Since $\Omega$ has constant length it satisfies
\begin{equation}
    (-1)^{\frac{n(n-1)}{2}} \left( \frac{\sqrt{-1}}{2}\right)^n \Omega \wedge \overline{\Omega} = \frac{1}{n!} \omega^n.
    \label{eq:HWB-calabi-Yau-equation}
\end{equation}
The Riemannian metric on $X$ is denoted by
\begin{equation}
    \label{eq:metriconakahlermanifold}
    g := \omega(\cdot, J \cdot).
\end{equation}
\end{definition}

In this article a Calabi--Yau manifold is assumed to be Ricci flat. A K\"ahler manifold with trivial canonical bundle which is not Ricci flat is called \textit{almost Calabi--Yau}. Special Lagrangians are not minimal submanifolds in an almost Calabi--Yau manifold, but instead are mimal submanifolds for a Riemannian metric conformal to \eqref{eq:metriconakahlermanifold}. The results of this article also generalize to almost Calabi--Yau manifolds (see Section~\ref{sec:HWB-generalizations}).

\begin{definition}~
\label{def:immersionsandimmersedforms}
\begin{enumerate}[label={(\arabic*)}]
    \item Let $L$ be a smooth compact oriented manifold of dimension $n$ with smooth boundary. $\Diff(L)$ denotes the group of diffeomorphisms of $L$ that preserve boundary components of $L$, i.e., $\psi \in \Diff(L)$ if
    \begin{enumerate}[label={(\roman*)}]
        \item $\psi:L \rightarrow L$ is a diffeomorphism and
        \item $\psi(C) \subset C$ for every connected component of the boundary $C\subset \partial L$.
    \end{enumerate}

    \item An \textit{immersion} $f:L \rightarrow X$ is a smooth map for which the derivative $df_p: T_pL \rightarrow T_{f(p)}X$ is injective for every $p\in L$. An immersion $f$ is called \textit{free} if $f\circ \psi = f$ and $\psi \in \Diff(L)$ implies $\psi = \id_L$, i.e., $f$ has trivial stabilizer under the reparametrization action of $\Diff(L)$. An injective immersion is always free, and when $L$ is compact an injective immersion is also an embedding.
    
    \item An \textit{immersed submanifold of $X$ of type $L$} is an equivalence class of immersions $f:L \rightarrow X$ under the reparametrization action of $\Diff(L)$,
    \[f \sim f' \quad \iff \quad  \exists \ \psi\in \Diff(L)\quad s.t. \quad f' = f\circ \psi. \]
    The equivalence class of an immersion $f$ is denoted by $[f]$.
    
    \item An immersed submanifold is called \textit{free} if it admits a representative that is a free immersion. It is immediate that every representative is free.
\end{enumerate}
\end{definition}

The motivation for working with free immersions is to avoid immersions with non-trivial stabilizers under the reparametrization action of $\Diff(L)$. The typical example of a non-free immersion is a map that factors through a covering map.

\begin{exam}
    Suppose $p:L \rightarrow B$ is a smooth covering map and $g:B \rightarrow X$ is an embedding, then $f:= g\circ p:L \rightarrow X$. If $p$ is not a diffeomorphism, then $f$ fails to be a free immersion since $f$ is stabilized by the deck transformations of $p:L \rightarrow B$. 
\end{exam}
The typical way to check that an immersion is free is to construct an embedded point.

\begin{lemma}
    Let $f:L \rightarrow X$ be an immersion and suppose $p\in L$ is an embedded point, i.e., $f^{-1}(f(p)) = p$. Then $f$ is a free immersion.
\end{lemma}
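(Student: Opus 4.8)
The plan is to unwind the definition of a free immersion directly: I must show that any $\psi \in \Diff(L)$ satisfying $f \circ \psi = f$ is forced to be $\id_L$. The first step uses the embedded point to pin down $\psi$ there. Applying the relation $f \circ \psi = f$ at $p$ gives $f(\psi(p)) = f(p)$, so $\psi(p) \in f^{-1}(f(p)) = \{p\}$, whence $\psi(p) = p$. Thus the fixed-point set $S := \{q \in L : \psi(q) = q\}$ is nonempty. The rest is a standard open--closed (connectedness) argument for $S$, and the whole content of the lemma is concentrated in showing $S$ is open.

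Closedness of $S$ is formal: $S$ is the preimage of the diagonal of the Hausdorff space $L \times L$ under the continuous map $q \mapsto (\psi(q), q)$, hence closed. For openness, the essential tool is the local injectivity of immersions. Near any point $q \in L$ the map $f$ restricts to an embedding on some open neighborhood $U_q$, so $f|_{U_q}$ is injective. Given $q \in S$, continuity of $\psi$ together with $\psi(q) = q$ lets me choose an open $V \ni q$ with $V \subseteq U_q$ and $\psi(V) \subseteq U_q$. Then for every $q' \in V$ both $q'$ and $\psi(q')$ lie in $U_q$ and satisfy $f(q') = f(\psi(q'))$, so injectivity of $f|_{U_q}$ forces $\psi(q') = q'$. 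Hence $V \subseteq S$, and $S$ is open. With $S$ nonempty, open, and closed, connectedness of $L$ gives $S = L$, i.e. $\psi = \id_L$, so $f$ is free.

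The step I expect to require the most care is the openness of $S$, and specifically two points feeding into it. The first is that local injectivity of $f$ must hold at boundary points as well; since an immersion of a manifold with boundary is still a local embedding in suitable half-space charts, the neighborhoods $U_q$ exist for $q \in \partial L$ too, and the argument is unaffected. The second, and the genuinely load-bearing hypothesis, is the connectedness of $L$: without it an embedded point on one component cannot constrain the behaviour of $f$ on another, and a covering-type immersion on a second component (as in the preceding Example) would yield a nontrivial stabilizer despite the existence of an embedded point. I would therefore either invoke connectedness of $L$ explicitly or run the open--closed argument on the connected component containing $p$ and note that freeness, in the intended setting, is asserted for connected $L$.
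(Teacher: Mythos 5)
Your argument is correct. The paper does not actually prove this lemma---it simply cites Cervera--Mascar\'o--Michor and Solomon--Yuval---so you have supplied the details that the paper delegates to references, and your open--closed argument on the fixed-point set $S=\{q:\psi(q)=q\}$ is exactly the standard mechanism behind the cited result: the embedded point forces $\psi(p)=p$, local injectivity of the immersion (valid at boundary points via half-space charts) gives openness of $S$, Hausdorffness gives closedness, and connectedness finishes. Your flag about connectedness is a genuinely worthwhile observation: Definition~\ref{def:immersionsandimmersedforms} does not explicitly require $L$ to be connected, and for disconnected $L$ the statement as written fails (take an embedding on the component containing $p$ and a map factoring through a nontrivial covering on another component; the deck transformations, extended by the identity, stabilize $f$ while respecting the boundary-component condition in the definition of $\Diff(L)$). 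So the lemma should be read with $L$ connected, or the conclusion restricted to diffeomorphisms supported on the component of $p$; stating that hypothesis explicitly, as you do, is the right call.
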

\begin{proof}
    Refer to Cervera--Mascar\'o--Michor \cite[Lemma~1.4]{Cervera-Mascaro-MichorMR1244452} and Solomon--Yuval \cite[Lemma 2.4, 2.5]{SolomonYuval2020-geodesics}.
\end{proof}

The results of this article are formulated in terms of immersed special Lagrangians. However most calculations proceed by fixing a specific parametrization or representative immersion, and the distinction between immersions and embeddings does not play a serious role in this article.

\begin{definition}~
\label{def:immersedlagrangiansandspeciallagrangians}
\begin{enumerate}[label={(\arabic*)}]
    \item An immersion $f:L \rightarrow X$ is \textit{Lagrangian} if $f^*\omega = 0$. Note that being Lagrangian is $\Diff(L)$ invariant.

    \item A Lagrangian immersion $f:L \rightarrow X$ is called \textit{special Lagrangian} if in addition $f^*\Im(\Omega) = 0$.
    
    \item An immersed manifold $[f]$ of type $L$ in $X$ is (special) Lagrangian if one representative is (special) Lagrangian. Note that if one representative is (special) Lagrandian then all representatives are so.
\end{enumerate}
\end{definition}

Definition \ref{def:HWB-boundary-conditions-on-immersions-and-lagrangians} recalls the Lagrangian boundary conditions for Lagrangians with boundary, introduced by Solomon--Yuval \cite[Notation 2.7]{SolomonYuval2020-geodesics}.

\begin{definition}
\label{def:HWB-boundary-conditions-on-immersions-and-lagrangians}
Let $L$ be a compact manifold with smooth boundary, and let $C_1,\dots, C_d$ denote its boundary components. Fix an ordering $C_1,\dots, C_d$ on the boundary components. Fix pairwise disjoint embedded Lagrangian submanifolds $\Lambda_1, \dots, \Lambda_d \subset X$.
\begin{enumerate}
    \item A smooth map $f:L \rightarrow X$ is said to have \textit{boundary condition $\Lambda_1,\dots, \Lambda_d$} if
\begin{enumerate}[label={(\alph*)}]
    \item $f(C_i) \subset \Lambda_i$ for $i=1,\dots, d$, and
    \item $df(T_pL) \not\subset T_{f(p)} \Lambda_i$ for all $p\in C_i$ and $i=1,\dots, d$.
\end{enumerate}
    The set of smooth maps with boundary conditions $\Lambda_1,\dots, \Lambda_d$ is denoted by $C^\infty(L, X; \Lambda_1,\dots, \Lambda_d)$. When 

    If $f$ is in addition a free immersion, then $f|_{C_i}: C_i \rightarrow \Lambda_i$ is an immersion for $i=1,\dots, d$. Note that both (a) and (b) are invariant under reparametrization by $\Diff(L)$ because by Definition \ref{def:immersionsandimmersedforms}(1) elements of $\Diff(L)$ preserve boundary components.

\item An immersed submanifold of type $L$ is said to have boundary conditions $\Lambda_1, \dots, \Lambda_d$ if one and hence every representative immersion has these boundary condition. 
\end{enumerate}
Implicit in Definition~\ref{def:HWB-boundary-conditions-on-immersions-and-lagrangians}.1 is an ordering of the $d$ boundary components of $L$ corresponding to the ordering $\Lambda_1,\dots, \Lambda_d$.
\end{definition}

\begin{remark}
    There have been other notions of boundary conditions in the study of special Lagrangians. Butscher studied a different class of special Lagrangians whose boundary is constrained to lie on a fixed co-dimension 2 symplectic submanifold \cite[p.~3]{Butscher2002}, and generalized McLean's deformation theorem under these boundary conditions \cite{Mclean1998}. The resulting moduli space of special Lagrangians is fairly different from the one studied in this article, and its tangent space is identified with harmonic 1-forms satisfying a Neumann boundary condition (compare with Theorem~\ref{thm:HWB-tangentspacetospeciallagrangianswithboundary-2}).
\end{remark}
 
\begin{definition}
\label{def:HWB-moduli-space-of-special-lagrangians}
Let $L$ have $d$ boundary components $C_1,\dots, C_d$, and let $\Lambda_1, \dots, \Lambda_d \subset X$ be pairwise disjoint embedded Lagrangians.
\begin{enumerate}[label={(\arabic*)}]
    \item The moduli space of smooth free immersed Lagrangian submanifolds of $X$ of type $L$ with boundary conditions $\Lambda_1,\dots, \Lambda_d$ is denoted by
    \[\lcal := \lcal(X,L; \Lambda) := \lcal (X,L ; \Lambda_1,\dots, \Lambda_d).\]
    \item The moduli space of smooth free immersed special Lagrangian submanifolds in $X$ of type $L$ with boundary conditions $\Lambda_1,\dots, \Lambda_d$  is denoted by
    \begin{gather*}
        \scal\lcal :=  \scal \lcal(X,L; \Lambda) :=\scal \lcal (X,L ; \Lambda_1,\dots, \Lambda_d).
    \end{gather*}
\end{enumerate}
As in Definition~\ref{def:HWB-boundary-conditions-on-immersions-and-lagrangians}, the boundary components $C_1,\dots, C_d$ of $L$ and the Lagrangians $\Lambda_1,\dots, \Lambda_d$ are assumed to be ordered. The boundary data $\Lambda_1,\dots, \Lambda_d$ is often abbreviated to $\Lambda$ or omitted to simplify notation.
\end{definition}

A key observation of Harvey--Lawson is that the special Lagrangian condition $f^*\Im(\Omega) \equiv 0$ is equivalent to the calibration condition $f^*\Re(\Omega) = \vol_{f^* g}$.

\begin{lemma}
\label{lemma:HWB-harveylawson-calibration-lemma}
    {\rm\cite[Theorem~III.1.10]{HarLaw-calibratedgeom}} If $f:L \rightarrow X$ is a special Lagrangian immersion then $L$ is calibrated by $\Re(\Omega)$, i.e.,
    \[f^*\Re(\Omega) = \vol_{f^*g},\]
    where $g = \omega(\cdot, J\cdot)$ is the Riemannian metric on $X$. Furthermore $f:L \rightarrow X$ is an immersed minimal submanifold of $X$.
\end{lemma}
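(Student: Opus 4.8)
The plan is to prove both assertions pointwise and then globalize, treating the calibration identity $f^*\Re(\Omega) = \vol_{f^*g}$ as the main content and deriving minimality from it by the fundamental theorem of calibrated geometry. The two claims are linked: the identity says precisely that $L$ is calibrated by the closed form $\Re(\Omega)$, and once this is known minimality is a formal consequence.

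First I would fix $p \in L$ and reduce to linear algebra in $T_{f(p)}X$. Using that $\Omega$ has constant length normalized by \eqref{eq:HWB-calabi-Yau-equation}, I choose a unitary coframe identifying $(T_{f(p)}X,\omega,J,\Omega)$ isometrically with $(\CC^n,\omega_0,J_0,dz_1\wedge\cdots\wedge dz_n)$; the normalization in \eqref{eq:HWB-calabi-Yau-equation} is exactly what makes $dz_1\wedge\cdots\wedge dz_n$ restrict to the Euclidean volume form on the real axis plane $\RR^n$. Since $f^*\omega = 0$, the image $V := df_p(T_pL)$ is a Lagrangian $n$-plane, and I invoke the standard fact that every such plane has the form $V = A\,\RR^n$ for some $A \in U(n)$, with $(dz_1\wedge\cdots\wedge dz_n)|_V = \det_{\CC}(A)\,\vol_V = e^{i\theta}\vol_V$, where $\theta = \arg\det_{\CC}(A)$ is the Lagrangian angle. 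Pulling back, the special Lagrangian hypothesis $f^*\Im(\Omega) = 0$ forces $\sin\theta\,\vol_V = 0$ at $p$, hence $\theta \in \{0,\pi\}$ and $f^*\Re(\Omega) = \cos\theta\,\vol_{f^*g} = \pm\vol_{f^*g}$. As $\theta$ depends continuously on $p$ and takes values in $\{0,\pi\}$, it is locally constant, and choosing the orientation of $L$ on each component so that $f^*\Re(\Omega) > 0$ yields $f^*\Re(\Omega) = \vol_{f^*g}$ identically.

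For minimality I would invoke the two defining features of $\Re(\Omega)$ as a calibration on $X$: it is closed, since $\Omega$ is a holomorphic top form so $d\Omega = 0$ and hence $d\Re(\Omega) = 0$; and it has comass at most one, i.e. $\Re(\Omega)(\xi) \le 1$ on every unit oriented $n$-plane $\xi$. Granting these, the first-variation argument is immediate: for a variation $f_s$ of $f$ supported in the interior and fixing $\partial L$, one has $\vol(f_s) = \int_L f_s^*\vol_{f_s^*g} \ge \int_L f_s^*\Re(\Omega) = \int_L f_0^*\Re(\Omega) = \vol(f)$, where the inequality is the comass bound, the middle equality is Stokes applied to the closed form $\Re(\Omega)$ with the boundary held fixed, and the last equality is the calibration identity just proved. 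Thus $s = 0$ minimizes volume among such competitors, so $f$ is stationary for the volume functional and therefore minimal.

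The main obstacle is the comass bound $\Re(\Omega)(\xi) \le 1$ for arbitrary, not necessarily Lagrangian, unit $n$-planes $\xi$: for Lagrangian planes it is immediate from $\cos\theta \le 1$, but the volume comparison above ranges over competitors whose tangent planes need not be Lagrangian, so the bound on all $n$-planes is genuinely needed. This is the substantive content of Harvey--Lawson's fundamental lemma, and the normalization \eqref{eq:HWB-calabi-Yau-equation} is precisely what pins the comass to equal one rather than merely be finite. Since the lemma is quoted from \cite[Theorem~III.1.10]{HarLaw-calibratedgeom}, I would cite their comass computation rather than reprove it, noting that their description of the contact set as the special Lagrangian planes also supplies the equality case used implicitly above.
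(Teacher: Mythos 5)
Your proposal is correct, and it reconstructs essentially the argument the paper relies on: the paper gives no in-text proof of Lemma~\ref{lemma:HWB-harveylawson-calibration-lemma}, quoting it directly from \cite[Theorem~III.1.10]{HarLaw-calibratedgeom}, and your pointwise Lagrangian-angle computation ($V = A\,\RR^n$ with $A\in U(n)$, so $\Omega|_V = e^{i\theta}\vol_V$ and $f^*\Im(\Omega)=0$ forces $\cos\theta=\pm1$) together with the closed-form/comass first-variation argument is exactly the content of that theorem, with the comass bound legitimately deferred to the same source. The one point worth noting is the sign: under the paper's Definition~\ref{def:immersedlagrangiansandspeciallagrangians} ($f^*\omega=0$ and $f^*\Im(\Omega)=0$, with no positivity imposed) one only obtains $f^*\Re(\Omega)=\pm\vol_{f^*g}$ componentwise, and your re-orientation step is precisely the convention implicitly assumed in the lemma as stated, so this is a clarification rather than a gap.
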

Lemma~\ref{lemma:HWB-harveylawson-calibration-lemma} is only used in the proof of Lemma~\ref{lemma:HWB-phitishodgestarofthetat} to relate the special Lagrangian condition to the Riemannian Hodge star operator.

\subsection{Deformations with Lagrangian boundary conditions}
\label{subsec:HWB-weinstein-neighbrohood-with-boundary}

The moduli spaces of special Lagrangians $\scal \lcal(X,L,\Lambda_1,\dots, \Lambda_d)$ can be identified as the zero set of the special Lagrangian operator, an elliptic partial differential operator. The domain of this operator is an open neighborhood in $C^\infty(L,X;\Lambda_1,\dots, \Lambda_d)$  (Definition~\ref{def:HWB-boundary-conditions-on-immersions-and-lagrangians}). The space of immersions with boundary can be parametrized in the neighborhood of a Lagrangian immersion by an open set of 1-forms \cite[Lemma~2.8, Corollary~2.10]{SolomonYuval2020-geodesics}. This section is concerned with a description of this parametrization (Corollary~\ref{cor:HWB-boundary-lagrangian-deformation-space}). The results of this section are from Solomon--Yuval \cite[{\S}2]{SolomonYuval2020-geodesics}.

Fix a Lagrangian immersion $f_0 \in C^\infty(L, X; \Lambda_1,\dots, \Lambda_d)$ (Definition~\ref{def:HWB-boundary-conditions-on-immersions-and-lagrangians}). Recall that deformations of $f_0$ as an immersion are generated by sections of the pullback normal bundle,
\[f_0^*NL \rightarrow L, \qquad (f_0^*NL)_p = (NL)_{f(p)} = (T_pL)^{\perp}.\]
Given a section of the pullback normal bundle, its exponential flow generates a deformation of the immersion $f_0$. Furthermore if $f_0$ were in fact a Lagrangian immersion,then the contraction $V\mapsto i_V\omega$ defines an isomorphism $f_0^*NL \cong T^*L $, and deformations of $f_0:L \rightarrow X$ can be identified with 1-forms on $L$. Unfortunately, these natural deformations of $f_0$ need not respect the Lagrangian boundary conditions $\Lambda_1,\dots, \Lambda_d$. There are two ways this can go wrong.
\begin{enumerate}
    \item The normal vector field need not be tangent to the boundary condition $\Lambda_i$ over the boundary component $C_i$ (Figure~\ref{fig:HWB-normal-vector-field-vs-boundary-cond}).
    \item Even if the normal vector field is tangent to the boundary conditions $\Lambda_i$ over $C_i$, the geodesic along this direction might exit $\Lambda_i$ instantaneously.
\end{enumerate}

The second issue can be remedied easily by replacing the replacing Riemannian exponential flow with the exponential flow of an affine connection $\nabla^\Lambda$ for which $\Lambda_1,\dots, \Lambda_d$ are all geodesic submanifold (Proposition~\ref{prop:HWB-boundary-Lagrangian-vector-bundle-connection-2}).

To resolve  the first issue, the pullback normal bundle $NL \subset f^*TX$ is replaced by a diffrenet vector bundle of the same rank. When $f:L \rightarrow X$ is a Lagrangian embedding recall that $NL = Jdf(TL)$. The idea is to choose a different complex rotation $E = (a+J)df(TL)$ such that $E\cap T\Lambda_i \neq \{0\} $ over $C_i$, $1\le i\le d$. Then the sections of $E$ which are tangent to $\Lambda_1,\dots, \Lambda_d$ over the boundary will generate the deformations of interest (Proposition~\ref{prop:HWB-boundary-Lagrangian-vector-bundle-connection}).

\begin{figure}[htbp]
    \centering
    \includegraphics[width=0.7\linewidth]{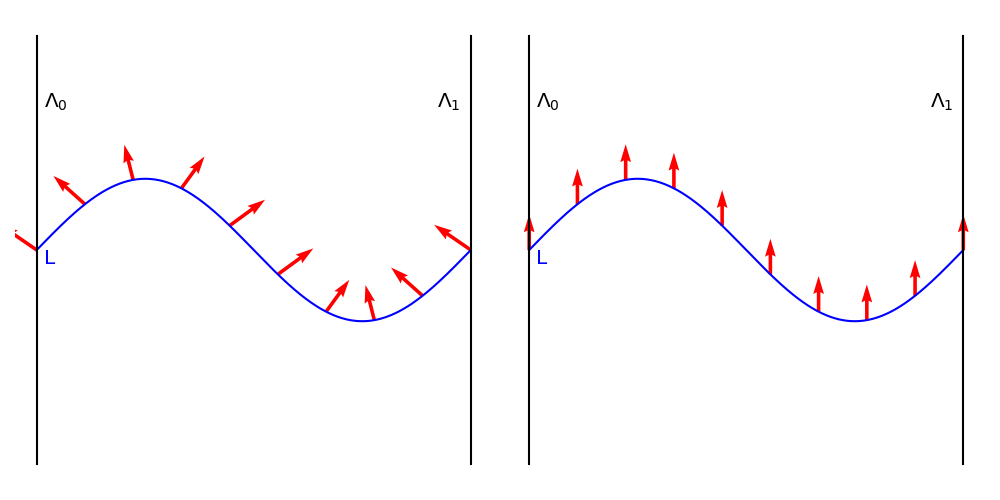}
    \caption{Consider the case $d=2$ boundary components $\Lambda_0$ and $\Lambda_1$. The flow of the normal vector field on the left does not keep the boundary of $L$ constrained along $\Lambda_0$ and $\Lambda_1$, while the vector field on the right preserves the boundary constrains $\Lambda_0, \Lambda_1$ (Definition~\ref{def:HWB-boundary-conditions-on-immersions-and-lagrangians}).}
    \label{fig:HWB-normal-vector-field-vs-boundary-cond}
\end{figure}

\begin{proposition}
         \label{prop:HWB-boundary-Lagrangian-vector-bundle-connection-2}
         Let $\Lambda_1,\dots, \Lambda_d \subset X$ be smooth and disjoint submanifolds. Then there exists a connection $\nabla^\Lambda$ on $X$ such that $\Lambda_1,\dots , \Lambda_d$ are all geodesic submanifolds for $\nabla^\Lambda$.
     \end{proposition}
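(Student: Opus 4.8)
The plan is to build $\nabla^\Lambda$ by perturbing the Levi--Civita connection $\nabla$ of $g$ by a suitable $(1,2)$-tensor, working near each $\Lambda_i$ independently. Recall that a submanifold $\Lambda \subset X$ is geodesic for an affine connection $\nabla'$ precisely when its second fundamental form relative to $\nabla'$ vanishes, i.e. $\nabla'_V W$ is tangent to $\Lambda$ for all vector fields $V, W$ tangent to $\Lambda$ along $\Lambda$; since the geodesic equation only sees the symmetric part of $\nabla'$, it is in fact enough to kill the symmetric part of the second fundamental form. The key flexibility is that the difference of any two connections is a tensor: for a section $S \in \Gamma\!\left(\mathrm{Hom}(TX \otimes TX, TX)\right)$, the assignment $\nabla'_V W := \nabla_V W + S(V,W)$ is again an affine connection, and conversely every connection arises this way. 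Thus it suffices to produce a tensor $S$ whose restriction to each $T\Lambda_i \otimes T\Lambda_i$ cancels the Levi--Civita second fundamental form of $\Lambda_i$.

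Concretely, first I would fix pairwise disjoint tubular neighborhoods $U_1, \dots, U_d$ of the pairwise disjoint embedded $\Lambda_i$, which exist because the $\Lambda_i$ are disjoint closed embedded submanifolds. On each $\Lambda_i$ the Levi--Civita second fundamental form $\mathrm{II}_i(V,W) := (\nabla_V W)^{\perp}$ is a well-defined symmetric $N\Lambda_i$-valued tensor on $\Lambda_i$ (tensoriality in both slots and symmetry follow from $\nabla$ being torsion-free, and only the values of $V, W$ along $\Lambda_i$ enter). Extend $-\mathrm{II}_i$ to a smooth $(1,2)$-tensor $\widetilde{S}_i$ on all of $U_i$ — for instance by pulling back along the tubular projection $\pi_i : U_i \to \Lambda_i$ in a local frame — and set $S_i := \chi_i\, \widetilde{S}_i$, where $\chi_i$ is a bump function equal to $1$ on a smaller neighborhood of $\Lambda_i$ and supported in $U_i$. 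Because the supports are disjoint, $S := \sum_{i=1}^d S_i$ is a globally defined $(1,2)$-tensor on $X$, and I set $\nabla^\Lambda := \nabla + S$.

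Finally I would verify the geodesic property: along $\Lambda_i$ and for $V, W \in T\Lambda_i$ one has $\chi_i \equiv 1$, so $S(V,W) = -\mathrm{II}_i(V,W)$, whence the normal component of $\nabla^\Lambda_V W = \nabla_V W - \mathrm{II}_i(V,W)$ is $\mathrm{II}_i(V,W) - \mathrm{II}_i(V,W) = 0$; thus $\nabla^\Lambda_V W$ is tangent to $\Lambda_i$, the second fundamental form of $\Lambda_i$ relative to $\nabla^\Lambda$ vanishes, and $\Lambda_i$ is geodesic. I expect the only genuinely delicate points to be bookkeeping ones rather than conceptual: ensuring the disjoint tubular neighborhoods exist (needing the $\Lambda_i$ to be disjoint and properly embedded), checking that the extension $\widetilde{S}_i$ is smooth, and confirming that modifying $\nabla$ only in the normal directions does not reintroduce a nonzero second fundamental form. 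Since $\mathrm{II}_i$ is already symmetric, no care about torsion is needed; one may even arrange $\nabla^\Lambda$ to be torsion-free by choosing $S$ symmetric, though the statement does not require it.
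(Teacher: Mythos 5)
Your argument is correct, and it reaches the conclusion by a genuinely different route than the paper. The paper's proof is metric-based: it constructs an auxiliary Riemannian metric on a tubular neighborhood of $\bigcup_i \Lambda_i$ for which the $\Lambda_i$ are totally geodesic (possible since the $\Lambda_i$ are disjoint and embedded), extends that metric to all of $X$, and takes $\nabla^\Lambda$ to be its Levi--Civita connection. You instead keep the original Levi--Civita connection $\nabla$ of $g$ and correct it by a globally defined $(1,2)$-tensor $S$ supported in disjoint tubular neighborhoods, chosen so that along each $\Lambda_i$ and for tangential inputs $S$ cancels the second fundamental form; the verification that the $\nabla^\Lambda$-second fundamental form then vanishes, hence that $\nabla^\Lambda$-geodesics tangent to $\Lambda_i$ remain in $\Lambda_i$ by uniqueness of geodesics, is exactly right. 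The trade-off is mild: your construction is more explicit and makes the smoothness and globalization issues (extension of $-\mathrm{II}_i$ off $\Lambda_i$ via the tubular projection and a cutoff, disjointness of supports) visible and checkable, at the cost of producing a connection that is not a priori the Levi--Civita connection of any metric; the paper's version yields a metric connection but leaves the local construction of the adapted metric implicit. Since Proposition~\ref{prop:HWB-boundary-Lagrangian-vector-bundle-connection-2} and its downstream uses (Corollary~\ref{cor:HWB-boundary-lagrangian-deformation-space}) only require an affine connection whose exponential map preserves each $\Lambda_i$, either construction suffices, and your observation that only the symmetric part of the correction matters is a correct (if here unnecessary, as $\mathrm{II}$ is already symmetric for the torsion-free $\nabla$) refinement.
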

     \begin{proof}
         Since $\Lambda_i$ are disjoint, can always construct a Riemannian metric such that their union is a geodesic submanifold on a tubular neighborhood of the union. Extend this Riemannian metric to all of $X$, and let $\nabla^\Lambda$ denote the corresponding Levi-Civita connection.
     \end{proof}

\begin{proposition}
\label{prop:HWB-boundary-Lagrangian-vector-bundle-connection}
         Let $\Lambda_1,\dots, \Lambda_d \subset X$ be disjoint embedded Lagrangian submanifolds of a Calabi--Yau manifold $(X,\omega, J, \Omega)$. Let $L$ be a compact manifold with smooth boundary and boundary components $C_1,\dots, C_d$, and fix a Lagrangian immersion $f:L \rightarrow X $ satisfying boundary conditions$ \Lambda_1,\dots, \Lambda_d$, i.e., $f(C_i) \subset \Lambda_i$ and $T_{f(p)}\Lambda_i \neq df(T_pL)$ for each $p\in C_i$, $1\le i\le d$.
         
         There exists a smooth function $a:L \rightarrow \RR$ such that the vector bundle $E:= (a+J)df(TL) \subset f^*TX$ satisfies the following:
             \begin{enumerate}[label={(\alph*)}]
                 \item The fiber $E_p \subset T_{f(p)} X$ is Lagrangian for each $p\in L$.
                 \item $E_p \cap df(T_pL) = \{0\}$ for all $p\in L$.
                 \item $ \dim E_p \cap T_{f(p)} \Lambda_i = 1$ for all $p\in C_i$. Furthermore $a|_{\partial L}$ is uniquely determined by this property.
                 \item Contraction by $\omega$ is an isomorphism between $E$ and $T^*L$, i.e., 
                 \[\Phi: E \xrightarrow[]{\sim} T^*L, \qquad \Phi: V \mapsto f^*i_V\omega.\]
                 \item For each boundary component $C_i \subset \partial L$, $i=1,\dots,d$,
                 \[\theta|_{T_pC_i} \equiv 0\ \iff \ \Phi|_{E}^{-1}\theta \in T_{f(p)} \Lambda_i \qquad \forall\ p\in C_i,\ \theta \in T_p^* L.\]
                 In particular if $\theta \in A^1_D(L)$ is a 1-form vanishing on the boundary $\partial L$,
                 \[(\Phi|_E^{-1} \theta)_p \in T_{f(p)} \Lambda_i \qquad\forall\ p\in C_i, \ i=1,\dots, d. \]
             \end{enumerate}
     \end{proposition}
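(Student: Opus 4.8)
The plan is to separate the pointwise linear-algebra content into two groups: properties (a), (b), (d), which hold for \emph{every} smooth function $a$, and properties (c), (e), which force the value of $a$ on $\partial L$. First I would verify the former over all of $L$, and then determine $a|_{\partial L}$ from the boundary geometry and extend it arbitrarily to the interior.

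For (a), (b), (d), fix $p\in L$ and write $X=df(u)$, $Y=df(v)$ for $u,v\in T_pL$. Since $f$ is Lagrangian, $\omega(X,Y)=0$; combining this with the $J$-invariance $\omega(J\cdot,J\cdot)=\omega(\cdot,\cdot)$ and $g=\omega(\cdot,J\cdot)$, a one-line expansion of $\omega\bigl((a+J)X,(a+J)Y\bigr)$ collapses to $0$, giving (a). For (b), if $(a+J)df(u)=df(v)$ then $Jdf(u)\in df(T_pL)\cap Jdf(T_pL)$, which is $\{0\}$ because a Lagrangian subspace and its $J$-image are $g$-orthogonal; hence $u=0$. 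For (d), $\Phi$ maps the $n$-dimensional $E_p$ into $T_p^*L$, and injectivity follows since $\Phi\bigl((a+J)df(u)\bigr)(v)=\omega\bigl(Jdf(u),df(v)\bigr)=-g(df(u),df(v))$, which vanishes for all $v$ only if $df(u)=0$, i.e. $u=0$. None of this uses the boundary conditions.

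For (c) and (e), I would work at a point $p\in C_i$ and pass to a linear symplectic normal form. Because $f$ and $\Lambda_i$ are Lagrangian with $f(C_i)\subset\Lambda_i$, the subspaces $V:=df(T_pL)$ and $W:=T_{f(p)}\Lambda_i$ are Lagrangian and both contain $U:=df(T_pC_i)$, which has dimension $n-1$; the transversality hypothesis $df(T_pL)\neq T_{f(p)}\Lambda_i$ then forces $U=V\cap W$ exactly. Applying a unitary change of frame I can take $T_{f(p)}X=\CC^n$ with $V=\RR^n$, $df(e_k)=\partial_{x_k}$ for a basis $e_1,\dots,e_n$ of $T_pL$ with $e_1,\dots,e_{n-1}$ spanning $T_pC_i$, and $U=\mathrm{span}(\partial_{x_1},\dots,\partial_{x_{n-1}})$. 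Since $W$ is Lagrangian, contains $U$, and lies in $U^\omega$, it must equal $U\oplus\RR w$ with $w=\cos\beta\,\partial_{x_n}+\sin\beta\,\partial_{y_n}$, and $V\neq W$ gives $\sin\beta\neq 0$. A direct computation of $E_p\cap W$ for $E_p=\mathrm{span}\bigl(a\partial_{x_k}+\partial_{y_k}\bigr)_k$ shows the intersection is $\{0\}$ unless $a=\cot\beta$, in which case it is the line $\RR\,(a\partial_{x_n}+\partial_{y_n})$; this gives (c) together with the uniqueness of $a|_{\partial L}$. The same model yields $\Phi|_E^{-1}\theta=-\sum_k\theta_k(a\partial_{x_k}+\partial_{y_k})$ with $\theta_k=\theta(e_k)$, and reading off the $\partial_{y_j}$-components for $j<n$ shows $\Phi|_E^{-1}\theta\in W$ iff $\theta_1=\dots=\theta_{n-1}=0$, i.e. iff $\theta|_{T_pC_i}\equiv 0$; this is (e), and the ``in particular'' claim is the special case of a form vanishing on all of $\partial L$.

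The main obstacle is not any single computation but turning the pointwise normal form into a globally smooth choice of $a$ and justifying uniqueness cleanly. I would record the invariant formula $a(p)=g(\nu,w)/\omega(\nu,w)$, where $\nu$ is a $g$-unit normal to $U$ inside $V$ and $w$ is any vector of $W$ transverse to $U$; one checks the right-hand side is independent of the choices (both numerator and denominator rescale with $w$ and change sign together with $\nu$) and that $\omega(\nu,w)\neq 0$, since otherwise $\nu$ would be $\omega$-orthogonal to all of $W=W^\omega$, forcing $\nu\in V\cap W=U$ and contradicting $\nu\perp U$. As $V=df(T_pL)$, $U=df(T_pC_i)$ and $W=T_{f(p)}\Lambda_i$ vary smoothly with $p\in C_i$, this exhibits $a|_{\partial L}$ as a smooth function, which I then extend smoothly and arbitrarily to $L$ (interior values being irrelevant to (a), (b), (d)). Checking that this invariant formula reproduces $\cot\beta$ in the normal form closes the argument.
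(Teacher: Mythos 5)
Your proposal is correct and follows essentially the same route as the paper: (a), (b), (d) by direct expansion valid for any choice of $a$, and (c), (e) by pointwise linear algebra at boundary points in an adapted unitary frame for the three Lagrangian planes $df(T_pL)$, $T_{f(p)}\Lambda_i$, and $E_p$ (the paper's orthonormal basis $w_1,\dots,w_n,Jw_1,\dots,Jw_n$ is your normal form $V=\RR^n\subset\CC^n$, and your $\cot\beta$ is the paper's $c_n/d_n$). Your invariant expression $a(p)=g(\nu,w)/\omega(\nu,w)$ is a nice refinement that makes the smoothness and choice-independence of $a|_{\partial L}$ more transparent than the paper's appeal to smooth dependence of the chosen frame, but the substance of the argument is the same.
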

     
     \begin{proof} If $L$ has no boundary, then $E = J df(TL)$ with $a\equiv 0$ would satisfy (a), (b) and (d). Furthermore $d=0$ and properties (c) and (e) are empty. More generally, properties (a) and (b) are true for any choice of $a$.
             \begin{enumerate}[label={(\alph*)}]
                 \item Observe that $df(TL)$ and $Jdf(TL)$ are both Lagrangian subspaces of $T_{f(p)} X$ since $f$ is a Lagrangian embedding, i.e., \[\omega(df(v_1), df(v_2)) = 0\quad \& \quad \omega(Jdf(v_1), Jdf(v_2)) = 0 \qquad \forall\ v_1,v_2 \in TL,\]
                 \begin{align*}
                     \implies \omega((a+J)dfv_1, (a+J)dfv_2) =&\ a^2\omega(dfv_1, dfv_2) + a\omega(Jdfv_1, dfv_2) \\
                     &\ + a\omega(dfv_1, Jdfv_2) + \omega(Jdfv_1, Jdfv_2)\\
                     =&\ a\omega(Jdfv_1, dfv_2) + a\omega(Jdfv_1, J^2dfv_2) \\
                     =&\ a\left(\omega(Jdfv_1, dfv_2) - \omega(Jdfv_1, dfv_2) \right)\\
                     =&\  0 \qquad \qquad \forall\ v_1,v_2 \in TL.
                 \end{align*}
                 Thus $E = (a+J)df(TL)$ is Lagrangian.
                 \item Since $df(TL)$ is Lagrangian, $df(TL) \cap Jdf(TL) = \{0\}$. It immediately follows that $E\cap df(TL) = \{0\}$ establishing (b).
                 \item By Claim~\ref{claim:HWB-boundary-choice-of-vector-bundle-a} there is a unique choice of smooth function $a|_{\partial L}$ such that $\dim E_p \cap T_{f(p)} \Lambda_i = 1$ for all $p\in C_i$, $i=1,\dots, d$. Extend $a$ smoothly to the whole of $L$ to conclude (c).
                 {
                 \begin{claim}
                 \label{claim:HWB-boundary-choice-of-vector-bundle-a}
                     Let $C_i \subset \partial L$ be a boundary component and fix $p\in C_i$. Let $v_n\in T_pL$ denote a unit normal to $T_pC_i$.
                     \begin{enumerate}[label={\roman*.}]
                         \item There is a unique number $a(p)$ such that $(a(p)+J) df(v_n) \in T_{f(p)}\Lambda_i$.
                         \item $a(p)$ is a smooth function of $p \in \partial L$.
                         \item If $E_p := (a(p) +J) df(T_pL)$, then $E_p \cap T_{f(p)} \Lambda_i = \RR (a+J) df(v_n)$.
                     \end{enumerate}
                 \end{claim}
                 \begin{proof} The strategy is to first construct such an $a(p)$, and then prove uniqueness. Smoothness of $a(p)$ will follow from the construction.
                 
                 \textbf{i.} Consider an orthogonal basis $v_1,\dots, v_{n-1}$ for $T_pC_i$ with respect to the pullback metric on $L$, and note that $v_n$ extend $v_1,\dots, v_{n-1}$ to an orthogonal basis of $T_pL$. Let \[w_i := df(v_i) \in df(T_pL).\]
                     Since $f$ is a Lagrangian immersion, $w_i$ forms an orthonormal basis for the Lagrangian plane $df(T_pL) \subset T_{f(p)}X$. Since $df(T_pL)$ is Lagrangian, $w_1,\dots, w_n, Jw_1,\dots, Jw_n$ is an orthonormal basis for $T_{f(p)} X$. By Definition~\ref{def:HWB-boundary-conditions-on-immersions-and-lagrangians}, $df(T_pC_i) \subset df(T_pL) \cap T_{f(p)} \Lambda_i$, and the linearly independent set $w_1,\dots, w_{n-1} \in T_{f(p)} \Lambda_i$ can be extended to an orthonormal basis $w_1,\dots, w_{n-1}, w$ of $T_{f(p)}\Lambda_i$. Let
                     \[w = \sum_{j=1}^n c_j w_j + d_j Jw_j \qquad c_j, d_j \in \RR.\]
                     Since $w_1,\dots, w_n, Jw_1,\dots, Jw_n$ is an orthonormal basis for $T_{f(p)}X$ and $w$ is orthogonal to $w_1,\dots, w_{n-1}$,
                     \[c_j = g(w,w_j) = 0 \qquad j=1,\dots,n-1.\]
                     Since $T_{f(p)} \Lambda_i$ is Lagrangian and $w,w_1,\dots, w_{n-1} \in T_{f(p)} \Lambda_i$, 
                     \[d_j = g(w,Jw_j) = - \omega(w, w_j) = 0 \qquad j=1,\dots, n-1.\]
                     Conclude that
                     \[w = c_nw_n + d_n Jw_n = (c_n + d_nJ)df_n(w).\]
                     Note that $d_n \neq 0$ since $T_{f(p)} \Lambda_i \neq df(T_pL)$ by Definition~\ref{def:HWB-boundary-conditions-on-immersions-and-lagrangians}. Pick $a(p):= c_n/d_n$, and observe that
                     \begin{equation}
                         \label{eq:HWB-choice-of-a-1}
                         d_n^{-1} w = (a(p)+ J) df(v_n) \in T_{f(p)} \Lambda_i. 
                     \end{equation}
                     Finally if $w' = (a'+J)df(v_n) \in T_{f(p)} \Lambda_i$, then $w' - w  = (a' - a) df(v_n) \in df(T_pL) \cap T_{f(p)} \Lambda_i$ which implies $(a'-a)v_n \in T_pC_i$. But $v_n \perp T_pC_i$ implies $a' = a$.
                     
                     \textbf{ii.} Since the chosen basis $w_1,\dots, w_{n-1}$ and $w$ varies smoothly with $p\in C_i$, the coefficients $c_n,d_n$ also vary smoothly with $p\in C_i$. So by construction $a(p)$ varies smoothly in $p \in C_i$ for each boundary component $C_i \subset \partial L$. Thus $a: \partial L \rightarrow \RR$ is smooth.

                     \textbf{iii.} Since $E = (a+J)df(TL) $, it follows from \eqref{eq:HWB-choice-of-a-1} that $(a(p)+J)df(v_n) \in E_p\cap T_{f(p)} \Lambda_i$. To see that $(a(p)+J)df(v_n)$ spans $E_p \cap T_{f(p)} \Lambda_i$, consider the bases
                    \[ (a+J)w_1,\dots, (a+J)w_n \in E_p, \quad \&\quad w_1,\dots, w_{n-1}, (a+J)w_n \in  T_{f(p)} \Lambda_i. \]
                    If $w'\in E_p \cap T_{f(p)}\Lambda_i$, then for some $c_1,\dots, c_n, d_1,\dot, d_n \in \RR$,
                    \[w' = \sum_{i=1}^n c_i (a+J)w_i \quad \&\quad w' = \sum_{i=1}^{n-1} c_i w_i + d_n(a+J)w_n. \]
                    But since $w_1,\dot, w_n, Jw_1,\dots, Jw_n$ are linearly independent,
                    
                    \[\sum_{i=1}^{n-1} (ac_i-d_i)w_i + c_iJw_i + (c_n-d_n)(a+J)w_n = 0\]
                    \[\implies \qquad c_i, d_i = 0, \quad 1\le j\le n-1, \qquad \& \qquad  c_n = d_n. \]
                    Conclude that $w' \in \RR(a+J) df(v_n) = \RR(a+j) w_n$
                 \end{proof}}

                 \item Since $E = (a+J)df(TL)$, an arbitrary element has the form $V = (a+J) df(Y)$ for some $Y \in TL$. But then
                 \[\Phi(V)(Y) = \omega(V,dfY) = \omega((a+J)dfY, dfY) = -g(dfY,dfY) \neq 0.\]
                 Conclude that $\Phi(V) \neq 0$ for any $V\in E$. But $E$ and $T^*L$ are finite-dimensional vector bundles of the same dimension. Thus $\Phi$ is an isomorphism.

                 \item Fix $p\in C_i$ and $\theta \in T_p^*L$. Let $V:= \Phi^{-1} (\theta) \in E_p$. Then
                 \[\theta(Y) = \Phi(V)(Y) = \omega(V,df(Y))  \qquad \forall\ Y\in T_pC_i, \]
                 \[\implies\qquad \theta|_{T_pC_i} \equiv 0\  \iff\ \omega(V,df(Y)) = 0 \qquad \forall\ Y\in T_pC_i. \]
                 But $f(C_i) \subset \Lambda_i$ by assumption on $f$, and so $df(T_pC_i) \subset T_{f(p)} \Lambda_i$. But by Claim~\ref{claim:HWB-boundary-choice-of-vector-bundle-a} $E_p= (a(p)+J) df(T_pL)$, and so $V = (a+J) df(Y_0)$ for some $Y_0\in T_pC_i$. Since $L$ is Lagrangian,
                 \[\begin{aligned}
                     \theta|_{T_pC_i} \equiv 0\  \iff&\ \omega(V,df(Y)) = 0 \qquad \forall\ Y\in T_pC_i \\
                     \iff&\ \omega((a+J)df(Y_0), df(Y))  = 0 \qquad \forall\ Y\in T_pC_i \\
                     \iff&\ \omega(Jdf(Y_0), df(Y))  = 0 \qquad \forall\ Y\in T_pC_i \\
                     \iff&\ g(df(Y_0), df(Y))  = 0 \qquad \forall\ Y\in T_pC_i \\
                      \iff&\ Y_0\perp T_pC_i\\
                     \iff&\ V = (a+J) df(Y_0) \in E_p \cap T_{f(p)} \Lambda_i,
                 \end{aligned}\]
                 where the last line follows from Claim~\ref{claim:HWB-boundary-choice-of-vector-bundle-a}.iii. \qedhere
             \end{enumerate}
     \end{proof}

     The bundle $E$ and connection $\nabla^\Lambda$ together define a local parametrization \eqref{eq:HWB-normal-field-to-deformation-formula-2} of a neighborhood of any given free Lagrangian immersion $f_0\in C^\infty(L, X; \Lambda_1,\dots, \Lambda_d)$ in terms of 1-forms on $L$. This parametrization can be found in Solomon--Yuval \cite[Corollary 2.10]{SolomonYuval2020-geodesics}. The parametrization is constructed in Corollary~\ref{cor:HWB-boundary-lagrangian-deformation-space}(ii). The argument to show it is indeed a parametrization is analogous to a result of Cervera--Mascar\'o--Michor \cite[Theorem 1.5]{Cervera-Mascaro-MichorMR1244452}.

     \begin{corollary}~
     \label{cor:HWB-boundary-lagrangian-deformation-space}
     \begin{enumerate}[label={(\roman*)}]
         \item Under the assumptions of Proposition~\ref{prop:HWB-boundary-Lagrangian-vector-bundle-connection}, for any 1-form vanishing on the boundary $\theta \in A^1_D(L)$, consider
     \begin{equation}
     \label{eq:HWB-normal-field-to-deformation-formula-2}
         f_{\theta}: L \rightarrow X,\qquad  f_{\theta} := \exp^\Lambda(\Phi^{-1}\theta) \circ f,
     \end{equation}
     where $f:L \rightarrow X $ is a free Lagrangian immersion satisfying boundary conditions $ \Lambda_1,\dots, \Lambda_d$ (Definition~\ref{def:HWB-boundary-conditions-on-immersions-and-lagrangians}), $\exp^\Lambda$ is the exponential map for the connection $\nabla^\Lambda$ defined in Proposition~\ref{prop:HWB-boundary-Lagrangian-vector-bundle-connection-2}, and $\Phi:E \rightarrow T^*L$ is the isomorphism from Proposition~\ref{prop:HWB-boundary-Lagrangian-vector-bundle-connection}.1(d). Then $f_{\theta}$ is a a free Lagrangian immersion with boundary conditions $ \Lambda_1,\dots, \Lambda_d$ for all $\theta$ in a sufficiently small neighborhood of zero in $A^1_D(L)$---the space of 1-forms on $L$ vanishing on the boundary $\partial L$. Furthermore $f_0 = f$, and
     \begin{equation}
         \label{eq:HWB-normal-field-to-deformation-formula-3}
         f^* \left( i_{\left. \frac{d f_{t\theta}}{dt} \right|_{t=0}} \omega \right) = \theta.
     \end{equation}
     \item Conversely if $f_1\in C^\infty(L, X; \Lambda_1,\dots, \Lambda_d)$ lies sufficiently close to $f:L \rightarrow X$ (in the $C^1$ topology), then $f_1 = f_{\theta}\circ \varphi$ for some 1-form $\theta \in A^1_D(L)$ and some diffeomorphism $\varphi \in \Diff(L)$.
     \end{enumerate}   
     \end{corollary}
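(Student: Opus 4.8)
The plan is to treat the two directions separately: part (i) is a direct verification built on Propositions~\ref{prop:HWB-boundary-Lagrangian-vector-bundle-connection-2} and~\ref{prop:HWB-boundary-Lagrangian-vector-bundle-connection}, while part (ii) is an adaptation to manifolds with boundary of the slice construction of Cervera--Mascar\'o--Michor \cite[Theorem~1.5]{Cervera-Mascaro-MichorMR1244452}, and is where essentially all of the difficulty lies.

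\textbf{Part (i).} Since $\Phi$ is a bundle isomorphism (property (d) of Proposition~\ref{prop:HWB-boundary-Lagrangian-vector-bundle-connection}), $\Phi^{-1}\theta$ is a smooth section of $E\subset f^*TX$ depending linearly, hence continuously, on $\theta\in A^1_D(L)$. Thus for $\theta$ in a small neighborhood of $0$ its image lies in the domain of $\exp^\Lambda$, so $f_\theta=\exp^\Lambda(\Phi^{-1}\theta)\circ f$ is well defined and smooth, and $f_0=f$ because $\exp^\Lambda$ restricts to the identity on the zero section. As $\theta\mapsto f_\theta$ is continuous into the $C^1$ topology and free immersions form an open set there (\cite[{\S}1]{Cervera-Mascaro-MichorMR1244452}, \cite[Lemmas~2.4--2.5]{SolomonYuval2020-geodesics}), $f_\theta$ is a free immersion for $\theta$ small. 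For the boundary conditions, fix $p\in C_i$; since $\theta$ vanishes on $\partial L$, property (e) of Proposition~\ref{prop:HWB-boundary-Lagrangian-vector-bundle-connection} gives $(\Phi^{-1}\theta)_p\in T_{f(p)}\Lambda_i$, and because $f(p)\in\Lambda_i$ and $\Lambda_i$ is totally geodesic for $\nabla^\Lambda$ (Proposition~\ref{prop:HWB-boundary-Lagrangian-vector-bundle-connection-2}), the geodesic $t\mapsto\exp^\Lambda_{f(p)}(t(\Phi^{-1}\theta)_p)$ stays in $\Lambda_i$; hence $f_\theta(C_i)\subset\Lambda_i$, while the transversality condition in Definition~\ref{def:HWB-boundary-conditions-on-immersions-and-lagrangians}.1(b) is open and so persists for small $\theta$. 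Finally, $\frac{d}{dt}\big|_{t=0}\exp^\Lambda_x(tv)=v$ shows that the variation field of $t\mapsto f_{t\theta}$ at $t=0$ is $\Phi^{-1}\theta$, whence $f^*\big(i_{\Phi^{-1}\theta}\omega\big)=\Phi(\Phi^{-1}\theta)=\theta$ by the defining formula for $\Phi$, which is \eqref{eq:HWB-normal-field-to-deformation-formula-3}.

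\textbf{Part (ii).} The engine is a tubular neighborhood of $f$ adapted to $E$ and $\nabla^\Lambda$. By property (b) of Proposition~\ref{prop:HWB-boundary-Lagrangian-vector-bundle-connection} and a dimension count, $E_p\oplus df(T_pL)=T_{f(p)}X$ for each $p$, so the map $\Xi(p,v):=\exp^\Lambda_{f(p)}(v)$, defined for $v\in E_p$ small, has differential $df_p\oplus\mathrm{id}_{E_p}$ at $(p,0)$ and is therefore a local diffeomorphism near each $(p,0)$ onto a neighborhood of $f(p)$ in $X$. Given $f_1$ that is $C^1$-close to $f$, for each $q\in L$ the point $f_1(q)$ lies close to $f(q)$, so inverting $\Xi$ near $(q,0)$ produces a unique pair $(\varphi(q),\eta)$ near $(q,0)$ with $\exp^\Lambda_{f(\varphi(q))}(\eta)=f_1(q)$; these local inversions agree on overlaps by uniqueness, defining a smooth $\varphi:L\to L$ that is $C^1$-close to $\mathrm{id}_L$, hence a diffeomorphism, together with a smooth section $\eta$ of $E$, such that $f_1=\exp^\Lambda(\eta)\circ f\circ\varphi=f_\theta\circ\varphi$ with $\theta:=\Phi\circ\eta$. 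To see that $\varphi$ preserves boundary components and that $\theta\in A^1_D(L)$, note that the restriction of $\Xi$ to the $n$-dimensional set $\{(p,v):p\in C_i,\ v\in E_p\cap T_{f(p)}\Lambda_i\}$ (using $\dim(E_p\cap T_{f(p)}\Lambda_i)=1$, property (c)) maps into $\Lambda_i$ and is a local diffeomorphism onto a neighborhood of $f(q)$ in $\Lambda_i$; since $f_1(q)\in\Lambda_i$ for $q\in C_i$, uniqueness of the $\Xi$-preimage forces $\varphi(q)\in C_i$ and $\eta\in E\cap T\Lambda_i$, so $\varphi\in\Diff(L)$ and $\theta\in A^1_D(L)$ by property (e).

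The main obstacle is the global step in part (ii): since $f$ is only an immersion, $f(L)$ may self-intersect, so one must verify that the locally defined foot-point assignment $q\mapsto(\varphi(q),\eta)$ is globally single valued and that $\varphi$ is a genuine diffeomorphism rather than, say, a nontrivial covering of its image. The $C^1$-closeness of $f_1$ is used to select the foot point near $q$ rather than on a distant sheet of $f(L)$, and the freeness of $f$ is exactly what rules out reparametrization ambiguity, so that the decomposition $f_1=f_\theta\circ\varphi$ and the form $\theta$ are well defined. This is precisely the content of the Cervera--Mascar\'o--Michor argument, carried out here with the additional bookkeeping imposed by the boundary constraints.
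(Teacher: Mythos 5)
Your proposal is correct and follows essentially the same route as the paper: part (i) via Proposition~\ref{prop:HWB-boundary-Lagrangian-vector-bundle-connection}(d),(e) and the geodesic property of $\nabla^\Lambda$, and part (ii) via the tubular-neighborhood map $\Psi(p,v)=\exp^\Lambda_{f(p)}(v)$ and a boundary-adapted Cervera--Mascar\'o--Michor lifting argument, with the conormal bundles $\nu_{C_i}$ accounting for the Lagrangian boundary conditions. The only place the paper is more explicit is the single-valuedness of the lift, which it secures by choosing a double cover $W_\alpha\subset U_\alpha$ with $f|_{U_\alpha}$ injective and restricting to immersions $f_1$ with $f_1(\overline{W_\alpha}\cap\overline{W_\beta})\subset\Psi(\vcal\cap\pi^{-1}U_\alpha\cap\pi^{-1}U_\beta)$ --- precisely the issue you flag and correctly attribute to the $C^1$-closeness hypothesis.
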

     
     \begin{proof}~
     \begin{enumerate}[label={(\roman*)}]
         \item Note that since $\theta$ vanishes on the boundary, by Proposition~\ref{prop:HWB-boundary-Lagrangian-vector-bundle-connection}.1(e), $\Phi^{-1}\theta$ is tangent to $\Lambda_i$ at each boundary point $p\in C_i$. Since $\Lambda_i$ is a geodesic submanifold for the chosen connection $\nabla^\Lambda$, at any point $p\in C_i$, $\exp^\Lambda({\Phi^{-1}\theta})(p)$ lies on $\Lambda_i$. Thus
     \[f_\theta(p) = \exp^\Lambda( {\Phi^{-1} \theta}) \circ f(p) \in \Lambda_i\qquad \forall\ p\in C_i,\ i=1,\dots, d.\]
     Conclude that $f_{\theta} \in C^\infty(L, X; \Lambda_1,\dots, \Lambda_d)$ for all $\theta \in A^1_D$. Moreover \eqref{eq:HWB-normal-field-to-deformation-formula-3} follows by Proposition~\ref{prop:HWB-boundary-Lagrangian-vector-bundle-connection}.1(d) and the following calculation,
     \[ \begin{aligned}
         \left. \frac{d f_{t\theta}}{dt}\right|_{t=0} =&\ \Phi^{-1} \theta \\
         \implies \qquad f^* \left( i_{\left. \frac{d f_{t\theta}}{dt}\right|_{t=0}} \omega \right) =&\ f^* \left( i_{\Phi^{-1} \theta} \omega \right) \\
         =&\ \Phi(\Phi^{-1} \theta) = \theta.
     \end{aligned} \]

     \item The converse is a generalization of Cervera--Mascar\'o--Michor to immersions with boundary \cite[Theorem 1.5]{Cervera-Mascaro-MichorMR1244452}. A proof is provided in {\S}\ref{subsec:HWB-appendix-proof-of-converse}. \qedhere
     \end{enumerate}
     \end{proof}

  \subsection{Smooth paths and tangents in the moduli space of Lagrangians with boundary}
  \label{subsec:HWB-smooth-paths-and-tangents}

  This section recalls the notion of a smooth map of Lagrangian immersion introduced by Akveld--Salamon's \cite[p.~613]{Akveld2001}, and the tangent vector to such a smooth path \cite[Lemma~2.1]{Akveld2001}.

    \begin{definition}
    \label{def:smoothpathoflagrangians}
    Let $Y$ be a simply connected smooth manifold, and consider a map $\zcal:Y \rightarrow \lcal(X,L; \Lambda_1,\dots, \Lambda_d)$ (Definition~\ref{def:HWB-moduli-space-of-special-lagrangians}). A smooth (resp. $C^{k,\alpha}$) lifting of $\zcal$ is a smooth (resp. $C^{k,\alpha}$) function \[f:Y \times L \rightarrow X, \qquad f_y := f(y,\cdot)\]
    such that $f_y:L \rightarrow X$ is a free Lagrangian immersion representing the immersed Lagrangians submanifold $\zcal(y)$ for all $y\in Y$. The map $\zcal$ is called \textit{smooth} (resp. $C^{k,\alpha}$) if it admits a \textit{smooth (resp. $C^{k,\alpha}$) lifting}.
    \end{definition}

    Theorem~\ref{thm:HWB-tangentspacetospeciallagrangianswithboundary-2} asserts that the moduli space $\scal\lcal (X,L; \Lambda_1,\dots, \Lambda_d)$ is a smooth manifold. But Definition \ref{def:smoothpathoflagrangians} gives a different definition of smooth maps in $\scal\lcal$. It turns out that both definitions are compatible by Corollary~\ref{corollary:HWB-lifted-local-coordinate-chart} (proved later) which constructs a local coordinate chart on the smooth moduli space from Theorem~\ref{thm:HWB-tangentspacetospeciallagrangianswithboundary-2}, which is smooth in the sense of \eqref{def:smoothpathoflagrangians}.

    Akveld--Salamon identified the tangent vector to a smooth path of Lagrangian immersions $f_t:L \rightarrow X$, $t\in (-\epsilon, \epsilon)$, with the 1-form \eqref{eq:HWB-formula-for-derivative-of-path-of-Lagrangians} \cite[Lemma~2.1]{Akveld2001}. They observed that the tangent 1-form $\theta_f$ is closed and transforms well under reparametrization of the path $f_t$. The same result is reproduced in the case of Lagrangians with boundary.

  \begin{lemma}~
  \label{lemma:HWB-formula-for-derivative-of-path-of-Lagrangians}
  \begin{enumerate}[label={\arabic*.}]
      \item Consider a smooth map $f:(-\epsilon , \epsilon) \times L \rightarrow X$ such that $f_t:=f(t,\cdot) $ is a Lagrangian immersion with boundary conditions $\Lambda_1,\dots, \Lambda_d$ for each $t\in (-\epsilon, \epsilon)$ (Definition~\ref{def:HWB-boundary-conditions-on-immersions-and-lagrangians}). Then
      \begin{equation}
      \label{eq:HWB-formula-for-derivative-of-path-of-Lagrangians}
          \theta_f :=  f_0^* \left(  i_{\left. \frac{df_t}{dt}\right|_{t=0} } \omega \right)
      \end{equation}
      is a well defined closed 1-form satisfying $\theta_f|_{\partial L} \equiv 0$.
      
      \item Consider a pair of smooth maps $f, \overline{f}:(-\epsilon , \epsilon) \times L \rightarrow X$ such that $f_t := f(t,\cdot)  $ and $ \overline{f}_t:= \overline{f}(t,\cdot)$ are both Lagrangian immersions with boundary conditions $\Lambda_1,\dots, \Lambda_d$ (Definition~\ref{def:smoothpathoflagrangians}), such that $\overline{f}_t = f_t \circ \psi_t$ for some $\psi_t \in \Diff(L)$. Then
      \[ \theta_{\overline{f}} = \psi_0^* \theta_f .\]
      In particular if $\psi_0 = \id$ then $\theta_{\overline{f}} = \theta_f$. So the 1-form $\theta_f$ depends only on the path of immersed Lagrangians $[f_t]$ and the chosen immersion $f_0$ lifting the mid point $\zcal(0)$.
  \end{enumerate}
  \end{lemma}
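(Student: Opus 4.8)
The plan is to treat the two parts separately: closedness and the boundary vanishing in Part 1 will come from the defining identities $d\omega=0$ and $f_t^*\omega\equiv 0$, while the reparametrization law in Part 2 will come from a single differentiation together with the Lagrangian condition on $f_0$.

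For Part 1, I would write $V:=\frac{df_t}{dt}\big|_{t=0}$ for the variation field of $f$ along $f_0$ and invoke the first-variation formula for pullbacks of a fixed form, $\frac{d}{dt}f_t^*\omega = f_t^*\big(i_{\frac{df_t}{dt}}d\omega\big) + d\big(f_t^*(i_{\frac{df_t}{dt}}\omega)\big)$, valid for an arbitrary smooth family of maps (the contraction $i_{\frac{df_t}{dt}}\omega$ is formed pointwise against the velocity $\frac{df_t}{dt}(p)\in T_{f_t(p)}X$ and its pullback is a genuine $1$-form on $L$, independent of any extension of the velocity; here the outer $d$ is the exterior derivative on $L$). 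Since $\omega$ is symplectic, $d\omega=0$, so the formula reduces to $\frac{d}{dt}f_t^*\omega = d\theta_t$ with $\theta_t:=f_t^*(i_{\frac{df_t}{dt}}\omega)$. Each $f_t$ is Lagrangian, so $f_t^*\omega\equiv 0$, and differentiating gives $d\theta_t=0$ for every $t$; in particular $d\theta_f=0$. A fully airtight alternative is to decompose $F^*\omega$ on $(-\epsilon,\epsilon)\times L$ (where $F(t,p):=f_t(p)$) into its $dt$- and $L$-components and use $d(F^*\omega)=F^*d\omega=0$, which yields $d_L\theta_t=\partial_t(f_t^*\omega)=0$.

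For the boundary vanishing I would fix $p\in C_i$ and $Y\in T_pC_i$ and evaluate $\theta_f(Y)=\omega\big(V(p),\,df_0(Y)\big)$ at $f_0(p)\in\Lambda_i$. Two constraints pin down the entries: the boundary condition $f_t(p)\in\Lambda_i$ for all $t$ shows that $t\mapsto f_t(p)$ is a curve in $\Lambda_i$, so $V(p)\in T_{f_0(p)}\Lambda_i$; and $f_0(C_i)\subset\Lambda_i$ gives $df_0(T_pC_i)\subset T_{f_0(p)}\Lambda_i$, so $df_0(Y)\in T_{f_0(p)}\Lambda_i$. Since $\Lambda_i$ is Lagrangian, $T_{f_0(p)}\Lambda_i$ is $\omega$-isotropic, whence $\omega(V(p),df_0(Y))=0$; as $Y\in T_pC_i$ was arbitrary, $\theta_f|_{\partial L}\equiv 0$. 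This is the one genuinely new input beyond the closed case of Akveld--Salamon, and it is exactly where the Lagrangian boundary conditions of Definition~\ref{def:HWB-boundary-conditions-on-immersions-and-lagrangians} enter.

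For Part 2, I would differentiate $\overline f_t=f_t\circ\psi_t$ at $t=0$. Writing $W:=\frac{d\psi_t}{dt}\big|_{t=0}$, a vector field on $L$, the chain rule gives $\frac{d\overline f_t}{dt}\big|_{t=0}(p)=V(\psi_0(p))+df_0\big(W(p)\big)$. Hence for $Y\in T_pL$, using $\overline f_0=f_0\circ\psi_0$, $\theta_{\overline f}(Y)=\omega\big(V(\psi_0(p))+df_0(W(p)),\,df_0(d\psi_0\,Y)\big)$. The term $\omega\big(df_0(W(p)),df_0(d\psi_0\,Y)\big)$ vanishes because $df_0(TL)$ is Lagrangian, and the remaining term equals $(f_0^*(i_V\omega))_{\psi_0(p)}(d\psi_0\,Y)=(\theta_f)_{\psi_0(p)}(d\psi_0\,Y)=(\psi_0^*\theta_f)_p(Y)$, giving $\theta_{\overline f}=\psi_0^*\theta_f$; when $\psi_0=\id$ this reads $\theta_{\overline f}=\theta_f$. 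The main obstacle is bookkeeping rather than depth: one must track base points carefully—the velocity $V$ is evaluated at $\psi_0(p)$ while $df_0$ acts on $d\psi_0\,Y$—and apply the isotropy of $df_0(TL)$ precisely at the tangential term. Neither step is hard once the first-variation formula and the isotropy of $T\Lambda_i$ are in hand, so I expect the only real care to be needed in justifying the pullback formula invariantly and in the base-point bookkeeping of Part 2.
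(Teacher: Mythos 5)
Your proposal is correct and follows essentially the same route as the paper: Cartan's formula plus $d\omega=0$ and $f_t^*\omega\equiv 0$ for closedness, the observation that both $\frac{df_t(p)}{dt}\big|_{t=0}$ and $df_0(T_pC_i)$ lie in the isotropic subspace $T_{f_0(p)}\Lambda_i$ for the boundary vanishing, and the chain rule together with isotropy of $df_0(TL)$ to discard the reparametrization term in Part 2. The only cosmetic difference is that the paper carries out the Part 2 computation for all $t$ before restricting to $t=0$, whereas you differentiate at $t=0$ directly; both are fine.
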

  
   \begin{proof}~
   \begin{enumerate}[label={\arabic*.}]
       \item By Cartan's formula for the Lie derivative,
       \begin{equation}
           \label{eq:HWB-formula-for-derivative-of-path-of-Lagrangians-eq1}
           d\theta_f = df_0^* i_{\left. \frac{df_t}{dt} \right|_{t=0}} \omega = {\left. \frac{d}{dt} \right|_{t=0}} (f_t^* \omega) - f_0^*i_{\left. \frac{df_t}{dt} \right|_{t=0}} d\omega. 
       \end{equation}
       Since $\omega$ is closed, $d\omega = 0$. Since $f_t:L \rightarrow X$ is a Lagrangian immersion, $f_t^*\omega = 0$. Thus the RHS of \eqref{eq:HWB-formula-for-derivative-of-path-of-Lagrangians-eq1} vanishes proving that $\theta_f$ is closed. 
       
       Consider a boundary point $p\in \partial L$ and assume WLOG that $p\in C_i$, where $C_i$ is the boundary component of $L$ corresponding to the boundary Lagrangian $\Lambda_i \subset X$. Fix a tangent vector $\xi_p \in T_p\partial L = T_pC_i$. Then
       \begin{equation}
           \label{eq:HWB-formula-for-derivative-of-path-of-Lagrangians-eq2}
           \theta_f (\xi_p) = f_0^*\left(i_{\left. \frac{df_t}{dt} \right|_{t=0}} \omega \right) (\xi_p) = \omega \left( \left. \frac{df_t(p)}{dt} \right|_{t=0}, df_0(\xi_p) \right).
       \end{equation}
       By Definition \ref{def:HWB-boundary-conditions-on-immersions-and-lagrangians}, $f_t(C_i) \subset \Lambda_i$ for all $t\in (-\epsilon ,\epsilon)$, and in particular $f_t(p) \in \Lambda_i$ for all $t\in (-\epsilon, \epsilon)$. Then
       \begin{equation}
           \label{eq:HWB-formula-for-derivative-of-path-of-Lagrangians-eq3}
           \left. \frac{df_t(p)}{dt} \right|_{t=0} \in T_{f_0(p)}\Lambda_i \quad \&\quad df_0(\xi_p) \in T_{f_0(p)} \Lambda_i.
       \end{equation}
       But $\Lambda_i \subset X$ is Lagrangian by definition and so $\Omega|_{\Lambda_i} \equiv 0$. Combining this with \eqref{eq:HWB-formula-for-derivative-of-path-of-Lagrangians-eq2} and \eqref{eq:HWB-formula-for-derivative-of-path-of-Lagrangians-eq3}, conclude that
       \[\theta_f(\xi_p) = 0 \quad \forall\ p\in \partial L, \xi_p \in T_pL, \quad \implies \quad \theta_f|_{\partial L} \equiv 0.\]
       
       \item $\overline{f}_t = f_t\circ \psi_t$ for a smooth path $\psi_t\in \Diff(L)$. Observe that 
       \[\frac{d\overline{f}_t}{dt} = \frac{df_t}{dt}\circ \psi_t + df_t(\frac{d\psi_t}{dt}).\]
       Since $f_t$ is a Lagrangian immersion $f_t^*\omega = 0$, and so for any $\xi \in TL$,
      \[\begin{aligned}
          \overline{f}_t^* i_{\frac{d\overline{f}_t}{dt}} \omega (\xi) =&\  \overline{f}_t^* i_{\frac{df_t}{dt}\circ \psi_t} \omega (\xi) + \overline{f_t}^* i_{df_t(\frac{d\psi_t}{dt})}\omega (\xi) \\
          =&\ \overline{f}_t^* i_{\frac{df_t}{dt}\circ \psi_t} \omega (\xi) + \omega\left(df_t(\frac{d\psi_t}{dt}), d\overline{f}_t(\xi) \right) \\
          =&\ \overline{f}_t^* i_{\frac{df_t}{dt}\circ \psi_t} \omega (\xi) + \omega\left(df_t(\frac{d\psi_t}{dt}), d{f}_t \circ d\psi_t(\xi) \right) \\
          =&\  \overline{f}_t^* i_{\frac{df_t}{dt}\circ \psi_t} \omega (\xi) + (f_t^*\omega)\left(\frac{d\psi_t}{dt}, d\psi_t(\xi) \right) \\
          =&\ \overline{f}_t^* i_{\frac{df_t}{dt}\circ \psi_t} \omega (\xi),
      \end{aligned}
      \]
      \begin{equation}
          \implies \overline{f}_0^*\left(\left.i_{\frac{d\overline{f}_t}{dt}} \omega\right|_{t=0} \right) = \psi_0^* f_0^*\left(\left.i_{\frac{df_t}{dt}} \omega\right|_{t=0} \right). \tag*{\qedhere}
      \end{equation}
   \end{enumerate}
   \end{proof}

\subsection{Hodge decomposition for manifolds with boundary}
\label{subsec:hodgedecomposition}

Section \ref{subsec:hodgedecomposition} recalls Hodge theory for manifolds with boundary \cite[Chapter 5, Proposition 9.8]{Taylor2011}. The reader is referred to Petersen \cite[Chapter 7]{Petersen2006} 
and Taylor \cite[Chapter 5]{Taylor2011} more details and a proof of the Hodge decomposition theorem.
%

\begin{definition}
\label{def:ofhodgestaronmanifolds}
Let $(M,g)$ be an n-dimensional compact oriented Riemannian manifold with boundary. For every $0\le k\le n$, the \textit{Hodge star operator} is the unique linear map of vector bundles $\star:\Lambda^kT^*M \rightarrow \Lambda^{n-k}T^*M$ satisfying
\begin{equation}
    \omega_1\wedge \star \omega_2 = g(\omega_1, \omega_2) d\vol \qquad \forall\ \omega_1,\omega_2 \in A^k(M).
    \label{eq:definitionofhodgestaronmanifolds}
\end{equation}
where $A^k(M)$ is the vector space of $k$-forms on $M$. Existence and uniqueness follows from non-degeneracy of $g$.
\end{definition}



The relative De Rham cohomology and De Rham cohomology classes are represented respectively by harmonic forms with Dirichlet and Neumann boundary conditions. Dirichlet and Neumann boundary conditions on differential forms are recalled in Definition~\ref{def:boundarydecompositionofformsontheboundary} \cite[p.~422]{Taylor2011}.

\begin{definition}
\label{def:boundarydecompositionofformsontheboundary}
Let $(M,g)$ be a compact Riemannian manifold with boundary, and $p\in \partial M$ a boundary point.
\begin{enumerate}[label={(\arabic*)}]
    \item For $0\le k\le n$, $\Lambda^kT^*_PM$ admits an orthogonal decomposition
    \[\Lambda^kT^*_pM = \Lambda^kT^*_p\partial M \oplus \left( \Lambda^kT^*_p\partial M\right)^{\perp}. \]
    For any $\omega \in \Lambda^k T_p^*M$, the orthogonal decomposition of forms is denoted by
    \[\omega = \omega_{tan} + \omega_{norm},\qquad \omega_{tan} \in \Lambda^kT^*_p\partial M,\ \omega_{norm} \in \left( \Lambda^kT^*_p\partial M\right)^{\perp}.\]
    $\omega_{tan}$ is called the tangential component of $\omega$ and $ \omega_{norm}$ is called the the {normal component} of $\omega$.
    \item The \textit{ Dirichlet boundary condition} on a $k$-forms $\omega \in A^k(M)$ is $(\omega_p)_{tan} \equiv 0$ for all $p\in \partial M$ and the Neumann boundary condition on a $k$-form $\omega \in A^k(M)$ is $(\omega_p)_{norm} \equiv 0$ for all $p\in \partial M$. The space of Dirichlet and Neumann $k$-forms are respectively denoted
\begin{align}
    A^k_D(M) &:= \{\omega \in A^k(M): \omega_{tan} = 0\}, \label{eq:dirichletboundary}\\
    A^k_N(M) &:= \{\omega \in A^k(M): \omega_{norm} = 0\} \label{eq:neumannboundary}.
\end{align}
    
\end{enumerate}
\end{definition}

By Lemma~\ref{lemma:hodgestaroperatorattheboundary}(i) asserts that the Dirichlet condition on differential forms is independent of the chosen Riemannian metric. However the Neumann condition depends on the choice of Riemannian metric. Lemma~\ref{lemma:hodgestaroperatorattheboundary}(ii) proves that the Hodge star operator carries {Dirichlet forms} to {Neumann forms}, and vice-versa.

\begin{lemma}
\label{lemma:hodgestaroperatorattheboundary}
Let $(M,g)$ be a compact Riemannian manifold with boundary, and let $I:\partial M \rightarrow M$ be the boundary inclusion map. For a $k$-form $\omega \in A^k(M)$,
\begin{enumerate}[label={(\roman*)}]
    \item $I^*\omega = I^*\omega_{tan}$. In particular $I^*\omega = 0$ if and only if $\omega_{tan} = 0$,
    \item $\omega_{tan}\equiv 0$ if and only if $(\star \omega)_{norm} \equiv 0$.
\end{enumerate}
\end{lemma}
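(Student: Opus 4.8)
The plan is to reduce both statements to pointwise linear algebra in a coframe adapted to the boundary. Fix a point $p\in\partial M$ and let $\nu$ be the unit normal to $\partial M$ at $p$. I would choose an orthonormal coframe $e^1,\dots,e^n$ of $T_p^*M$ for which $e^1,\dots,e^{n-1}$ restrict to an orthonormal coframe of $T_p^*\partial M$ and $e^n=\nu^\flat=g(\nu,\cdot)$. With respect to this coframe the tangential summand $\Lambda^kT_p^*\partial M$ is spanned by the wedge products $e^I=e^{i_1}\wedge\cdots\wedge e^{i_k}$ with $I\subset\{1,\dots,n-1\}$, while its orthogonal complement is spanned by those $e^I$ with $n\in I$. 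Hence every $\omega\in\Lambda^kT_p^*M$ splits uniquely as $\omega=\alpha+e^n\wedge\beta$ with $\alpha$ and $\beta$ built only from $e^1,\dots,e^{n-1}$, and this splitting is exactly $\omega_{tan}=\alpha$, $\omega_{norm}=e^n\wedge\beta$.

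For (i), the key observation is that $I^*e^n=0$: for any $\xi\in T_p\partial M$ one has $e^n(\xi)=g(\nu,\xi)=0$ since $\nu\perp T_p\partial M$. Pulling back the splitting then gives $I^*\omega=I^*\alpha+(I^*e^n)\wedge I^*\beta=I^*\alpha=I^*\omega_{tan}$, which is the first assertion. Because $I^*$ restricts to the canonical injective identification of the tangential summand $\Lambda^kT_p^*\partial M$ with $\Lambda^kT_p^*\partial M$, the vanishing $I^*\omega=0$ forces $\alpha=0$, i.e. $\omega_{tan}=0$; the converse is immediate. This settles (i).

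For (ii), I would compute the Hodge star on the basis elements: $\star e^I=\pm e^{I^c}$, where $I^c$ is the complementary index set and the sign is that of the permutation $(I,I^c)$. The decisive point is that complementation interchanges the two families of basis covectors: if $n\notin I$ then $n\in I^c$, and conversely. Thus $\star$ carries the tangential summand into the normal summand and the normal summand into the tangential summand, giving $\star(\omega_{tan})=(\star\omega)_{norm}$ and $\star(\omega_{norm})=(\star\omega)_{tan}$. Since $\star$ is a linear isomorphism on exterior powers, $(\star\omega)_{norm}=\star(\omega_{tan})$ vanishes if and only if $\omega_{tan}$ does, which is exactly (ii).

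The computations are entirely routine; the only thing requiring care---and the crux of the argument---is verifying that the Hodge star exchanges the tangential and normal summands rather than preserving them. This follows cleanly once the adapted coframe is in place, because $\star$ replaces each index set by its complement and $e^n$ is singled out as the conormal direction. I would also remark that part (i) confirms, as indicated in Definition~\ref{def:boundarydecompositionofformsontheboundary}, that the Dirichlet condition $\omega_{tan}=0$ is metric-independent (it coincides with $I^*\omega=0$), whereas the normal projection entering the Neumann condition depends on $g$ through the choice of $\nu$.
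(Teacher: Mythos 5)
Your proof is correct and follows essentially the same route as the paper's: both arguments fix a boundary point, work in an orthonormal coframe adapted to the boundary (the paper obtains it from a boundary coordinate chart composed with a linear map), identify the tangential and normal summands with the index sets omitting or containing the conormal direction, and conclude by noting that $I^*e^n=0$ for part (i) and that $\star$ sends each basis covector to the one with complementary index set, thereby exchanging the two summands, for part (ii).
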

\begin{proof}
Lemma~\ref{lemma:hodgestaroperatorattheboundary} is a local statement and it is sufficient to prove it at each point of $\partial M$. Fix $p\in \partial M$ and $U \subset M$ an open neighborhood of $p$ endowed with a boundary coordinate chart $\varphi:U \rightarrow \RR^n$ based at $p$, i.e., for $x_1,\dots, x_n$ coordinates on $\RR^n$,
\[\varphi(U) = \{x_n \ge 0\}\cap \varphi(U), \quad \varphi(p) = 0, \quad \& \quad \varphi(\partial M \cap U) = \{x_n = 0\} \cap \varphi(U).\]
The chart $\varphi$ can be composed with a linear map if necessary to ensure $dx_1,\dots, dx_n$ is an orthonormal basis of $T^*_pM$. Then $\{dx_{i_1}\wedge \dots \wedge dx_{i_k}: 1\le i_1< \dots < i_k\le n \}$ is an orthonormal basis for $\Lambda^k T^*_pM$. The Hodge star operator acts on such a basis vector to produce another basis vector with the complementary set of indices up to a sign. In particular if $i_1,\dots, i_k \le n-1$ and $\{j_1,\dots, j_{n-k-1}\} = \{1,\dots, n-1\} \setminus \{i_1,\dots, i_{k}\}$, then
\begin{gather}
    \label{eq:hodgestaronboundaryONB}
    \star(dx_{i_1}\wedge \dots \wedge dx_{i_k}) = \pm dx_{j_1} \wedge \dots \wedge dx_{j_{n-k-1}}\wedge dx_n
\end{gather}
Note that the Hodge star swaps basis vectors involving $dx_n$ those not involving it. At $p\in \partial M$ let
\begin{align*}
    \omega\ = \quad &\ \sum_{1\le i_1< \dots< i_k\le n-1} a_{i_1,\dots, i_k}dx_{i_1}\wedge \dots \wedge dx_{i_k} \\
    + &\ \sum_{1\le j_1< \dots< j_{k-1}\le n-1} b_{j_1,\dots, j_{k-1}}dx_{j_1}\wedge \dots \wedge dx_{j_{k-1}}\wedge dx_n, 
\end{align*}
for some $a_{i_1,\dots,i_k}, b_{j_1,\dots, j_{k-1}}\in \RR$. Since $x_n$ restricts to zero on the boundary $\partial M$,
\begin{equation}
    (I^*\omega)_p =  \sum_{1\le i_1< \dots< i_k\le n-1}a_{i_1,\dots, i_k}dx_{i_1}\wedge \dots \wedge dx_{i_k}. \label{eq:boundaryONBpullback}
\end{equation}
But since $dx_1,\dots, dx_n$ is an orthonormal basis for $T_p^*M$ and $dx_1,\dots, dx_{n-1}$ is an orthonormal basis for $ T_p^*\partial M$,
\begin{align}
    (\omega_{tan})_p =&\ \sum_{1\le i_1< \dots< i_k\le n-1}a_{i_1,\dots, i_k}dx_{i_1}\wedge \dots \wedge dx_{i_k}, \label{eq:boundaryONBtangential} \\
    (\omega_{norm})_p =&\ \sum_{1\le i_1< \dots< i_{k-1}\le n-1}b_{i_1,\dots, i_k}dx_{i_1}\wedge \dots \wedge dx_{i_{k-1}}\wedge dx_n, \label{eq:boundaryONBnormal}.
\end{align}
Lemma~\ref{lemma:hodgestaroperatorattheboundary}(i) follows from \eqref{eq:boundaryONBpullback} and \eqref{eq:boundaryONBtangential}. Lemma~\ref{lemma:hodgestaroperatorattheboundary}(ii) follows from \eqref{eq:hodgestaronboundaryONB}, \eqref{eq:boundaryONBtangential}, and \eqref{eq:boundaryONBnormal}.
\end{proof}

Recall the {co-exterior derivative operator} $d^*: A^k(M) \rightarrow A^{k-1}(M)$ defined in terms of the Hodge star operator $\star$ and the exterior derivative by
\begin{equation}
    \label{eq:dstaroperatordefinition}
    d^* := (-1)^{n(k+1)+1} \star d \star.
\end{equation}
The sign in \eqref{eq:dstaroperatordefinition} is chosen to make $d^*$ the formal adjoint of $d$ with respect to the $L^2$ inner product on $k$-forms vanishing at the boundary. Definition \ref{def:HWB-vector-spaces-of-forms-and-boundary-conditions} introduces notation for closed, exact, Dirichlet and Neumann forms which is used to state the Hodge decomposition Theorem~\ref{thm:hodgedecompositiontheoremformanifoldswithboundary} and Corollary \ref{cor:harmonicfieldsvanishingontheboundaryandcohomology}.

\begin{definition}
\label{def:HWB-vector-spaces-of-forms-and-boundary-conditions}
Let $(M,g)$ be a compact manifold with boundary. The \textit{Laplace-Beltrami operator for $k$-forms on $M$} is by definition
\begin{equation}
    \Delta : A^k(M) \rightarrow A^k(M),\qquad \Delta := d d^* + d^*d\qquad k\ge 0.
    \label{eq:definitionofharmonicformonmanifoldwithboundary}
\end{equation}
A $k$-form $\omega \in A^k(M)$ is called harmonic if $\Delta\, \omega = 0$. Denote the space of closed $k$-forms and exact $k$-forms respectively by
\begin{equation}
    \label{eq:closedandexactkforms}
    Z^k(M) = \ker d\subset A^k(M) \qquad \& \qquad E^k(M) := d(A^{k-1}(M)).
\end{equation}
 Similarly denote the space of co-closed and co-exact $k$-forms by
 \begin{equation}
    \label{eq:coclosedandcoexactkforms}
    cZ^k(M) = \ker d^* \subset A^k(M) \qquad \& \qquad cE^k(M) := d^*(A^{k+1}(M))
\end{equation}
Recall \eqref{eq:dirichletboundary} and \eqref{eq:neumannboundary} and let
\[\begin{aligned}
    Z^k_D(M) := Z^k(M) \cap A^k_D(M) \quad \&&\ \quad Z^k_N(M) := Z^k(M) \cap A^k_N(M) \\
    cZ^k_D(M) := cZ^k(M) \cap A^k_D(M) \quad \&&\ \quad cZ^k_N(M) := cZ^k(M) \cap A^k_N(M) 
\end{aligned}\]
To state the Hodge decomposition theorem for manifolds with boundary (Theorem~\ref{thm:hodgedecompositiontheoremformanifoldswithboundary}) also consider the spaces of exact Dirichlet and Neumann $k$-forms, 
\begin{align}
    E_N^k(M) := d(A_N^{k-1}),\quad &  E_D^k(M) := d(A_D^{k-1}(M)), \label{eq:exactwithboundaryconditions}\\ 
    cE_N^k(M) := d^*(A_N^{k+1}),\quad & cE_D^k(M) := d^*(A_D^{k+1}(M)) \label{eq:coexactwithboundaryconditions}.
\end{align}
Note that the boundary condition is applied before taking exterior derivatives.
\end{definition}

Closed $(d\omega = 0)$ and co-closed $(d^*\omega = 0)$ differential forms are always harmonic and on a closed manifolds, these are the only harmonic forms. However a manifold with boundary might admit harmonic forms that fail to be either closed or or co-closed. The closed and co-closed harmonic forms $Z^k(M)\cap cZ^k(M)$ form a distinguished subspace of all harmonic $k$-forms.

\begin{definition}
\label{def:harmonicfields}
A harmonic $k$-form that is both closed and co-closed is called a \textit{harmonic field}.
\end{definition}

Hodge decomposition for manifolds with boundary is stated using notation from \eqref{eq:dirichletboundary}, \eqref{eq:neumannboundary}, \eqref{eq:closedandexactkforms}, \eqref{eq:coclosedandcoexactkforms}, \eqref{eq:exactwithboundaryconditions}, and \eqref{eq:coexactwithboundaryconditions}. For a detailed proof, refer to Taylor \cite[Chapter 5, Proposition 9.8]{Taylor2011}.

\begin{theorem}[Hodge decomposition for manifolds with boundary]
\label{thm:hodgedecompositiontheoremformanifoldswithboundary}
Let $(M,g)$ be  compact, connected, oriented, smooth Riemannian manifold with boundary. Then for every $k\ge 0$ the following is an orthogonal direct sum decomposition
\[A^k = cE^k_N\oplus (Z^k\cap cZ^k) \oplus E^k_D.\]
The space of harmonic fields has a further decomposition
\[Z^k\cap cZ^k = (Z^k\cap cZ^k \cap A^k_N) \oplus (E^k\cap cZ^k) = (Z^k\cap cE^k) \oplus (Z^k\cap cZ^k\cap A^k_D).\]
The manifold $(M,g)$ is omitted in the previous equations for visual clarity.
\end{theorem}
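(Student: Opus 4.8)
The plan is to establish the Hodge--Morrey--Friedrichs decomposition in three movements: an integration-by-parts identity, the resulting orthogonality of the summands, and the genuinely analytic completeness statement; the last of these is where the work concentrates. Throughout write $\hcal^k := Z^k\cap cZ^k$ for the space of harmonic fields. The starting point is Green's formula: for $\alpha\in A^{k-1}(M)$ and $\beta\in A^k(M)$,
\[\langle d\alpha,\beta\rangle_{L^2} - \langle\alpha, d^*\beta\rangle_{L^2} = \int_{\partial M} I^*(\alpha\wedge\star\beta),\]
which follows by applying Stokes' theorem to $d(\alpha\wedge\star\beta)$ and unwinding the definition \eqref{eq:dstaroperatordefinition} of $d^*$. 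By Lemma~\ref{lemma:hodgestaroperatorattheboundary} the boundary integrand vanishes as soon as $\alpha$ is Dirichlet ($\alpha_{tan}=0$) or $\beta$ is Neumann ($\beta_{norm}=0$, equivalently $(\star\beta)_{tan}=0$), and this single observation powers every orthogonality below.

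First I would verify pairwise $L^2$-orthogonality of $cE^k_N$, $\hcal^k$, and $E^k_D$. For $d\alpha\in E^k_D$ and $h\in\hcal^k$, Green's formula gives $\langle d\alpha, h\rangle = \langle\alpha, d^*h\rangle = 0$, since $d^*h=0$ and the boundary term drops by the Dirichlet condition on $\alpha$; symmetrically $\langle d^*\beta, h\rangle=0$ for $d^*\beta\in cE^k_N$, using $dh=0$ and the Neumann condition on $\beta$; and $\langle d\alpha, d^*\beta\rangle$ equals $\langle dd\alpha,\beta\rangle=0$ plus a boundary term killed by \emph{either} boundary condition. The identical computations show that the two summands in each Friedrichs splitting of $\hcal^k$ are orthogonal.

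The main obstacle is completeness, i.e.\ that these subspaces span $A^k$. The crucial analytic input is the Gaffney inequality
\[\|\omega\|_{H^1} \le C\big(\|d\omega\|_{L^2} + \|d^*\omega\|_{L^2} + \|\omega\|_{L^2}\big),\]
valid for all $\omega$ satisfying $\omega_{tan}=0$ or $\omega_{norm}=0$; this is where the Lopatinski--Shapiro ellipticity of the Dirichlet and Neumann Laplacians enters, and I would invoke Taylor \cite[Chapter 5]{Taylor2011} for it. The estimate forces $E^k_D$ and $cE^k_N$ to have closed range in $L^2$, so $A^k = \overline{E^k_D}\oplus\overline{cE^k_N}\oplus W$ for $W$ the orthogonal complement. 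Any $\omega\in W$ satisfies $\langle\omega, d\alpha\rangle=0$ for all $\alpha\in A^{k-1}_D$ and $\langle\omega, d^*\beta\rangle=0$ for all $\beta\in A^{k+1}_N$; integrating by parts and applying elliptic regularity shows $\omega$ is smooth with $d\omega=0=d^*\omega$, so $W=\hcal^k$, while the same regularity identifies the closures with the smooth spaces $E^k_D$ and $cE^k_N$. This yields the first decomposition.

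Finally, for the splitting $\hcal^k = (\hcal^k\cap A^k_N)\oplus(E^k\cap cZ^k)$ I would again solve a boundary value problem: given $h\in\hcal^k$, produce $d\gamma\in E^k\cap cZ^k$ with $(h-d\gamma)_{norm}=0$, the solvability being another application of the elliptic theory above, after which $h-d\gamma\in\hcal^k\cap A^k_N$ and orthogonality is the Green computation already recorded. The second equality then comes \emph{for free} by Hodge duality: since $\star$ commutes with $\Delta$, interchanges $d$ and $d^*$ up to sign (hence exact with co-exact and closed with co-closed), and carries $A^{n-k}_N$ to $A^k_D$ by Lemma~\ref{lemma:hodgestaroperatorattheboundary}(ii), applying $\star$ to the first Friedrichs splitting in degree $n-k$ transforms it term by term into $\hcal^k = (\hcal^k\cap A^k_D)\oplus(Z^k\cap cE^k)$, which is precisely the asserted decomposition.
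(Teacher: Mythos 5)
The paper does not actually prove this theorem: it is quoted from Taylor \cite[Chapter 5, Proposition 9.8]{Taylor2011}, so there is no in-house argument to compare against. Your outline is the standard Hodge--Morrey--Friedrichs proof --- Green's formula for the orthogonality of the three summands, Gaffney's inequality for the analytic completeness, a boundary value problem for the first Friedrichs splitting of the harmonic fields, and Hodge duality (via Lemma~\ref{lemma:hodgestaroperatorattheboundary}(ii)) for the second --- and it is correct as a sketch. The one place where the compression hides real work is the claim that Gaffney's inequality ``forces'' $E^k_D$ and $cE^k_N$ to have closed range: by itself the estimate only bounds $\|\omega\|_{H^1}$ by $\|d\omega\|_{L^2}+\|d^*\omega\|_{L^2}+\|\omega\|_{L^2}$, and to convert this into closed range of $d$ on $A^{k-1}_D$ one still needs Rellich compactness, first to show that the Dirichlet harmonic fields in degree $k-1$ form a finite-dimensional space, and then to establish a Poincar\'e-type inequality $\|\alpha\|_{L^2}\le C\left(\|d\alpha\|_{L^2}+\|d^*\alpha\|_{L^2}\right)$ on its orthogonal complement. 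Likewise, identifying the $L^2$-orthogonal complement $W$ with the space of \emph{smooth} harmonic fields, and showing that the three projections of a smooth form are themselves smooth, requires boundary regularity for the Dirichlet and Neumann potentials rather than interior elliptic regularity alone (a harmonic field on a manifold with boundary need not be smooth up to the boundary without this input). These are precisely the points the cited treatment supplies, so your citations are placed where the genuine analysis lives.
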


As a corollary of Theorem~\ref{thm:hodgedecompositiontheoremformanifoldswithboundary} each relative De Rham cohomology class admits a unique representative harmonic field with Dirichlet boundary conditions, and each De Rham cohomology class admits a unique representative harmonic field with Neumann boundary conditions.


\begin{corollary}
\label{cor:harmonicfieldsvanishingontheboundaryandcohomology}
Let $(M,g)$ be a smooth, connected, compact Riemannian manifold with boundary. Then,
\begin{enumerate}[label={(\roman*)}]
    \item $Z^k\cap cZ^k\cap A_D^k(M) \cong H^k(M,\partial M; \RR)$ via the map taking $\alpha \in Z^k\cap cZ^k \cap A^k_D$ to its relative cohomology class $[\alpha] \in H^k(M,\partial M; \RR)$.
    \item $Z^k\cap cZ^k\cap A_N^k(M) \cong H^k(M; \RR)$ via the map taking a closed $k$-form $\alpha$ to its de Rahm cohomology class $[\alpha] \in H^k(M;\RR)$.
\end{enumerate}
\end{corollary}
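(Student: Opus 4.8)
The plan is to derive both isomorphisms as bookkeeping consequences of the Hodge decomposition (Theorem~\ref{thm:hodgedecompositiontheoremformanifoldswithboundary}) together with the $L^2$-orthogonality (hence directness) of its summands. The one external input I would invoke is the standard identification of relative de Rham cohomology with the cohomology of the \emph{Dirichlet complex}: since $I^*\omega = 0$ if and only if $\omega_{tan}=0$ by Lemma~\ref{lemma:hodgestaroperatorattheboundary}(i), the forms pulling back to zero on $\partial M$ are exactly $A^\bullet_D$, and $d$ preserves this subcomplex because $I^*d\omega = dI^*\omega$. Its cohomology is $H^k(M,\partial M;\RR) = Z^k_D/E^k_D$, while ordinary cohomology is $H^k(M;\RR)=Z^k/E^k$. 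With these identifications the maps in (i) and (ii) are visibly well-defined, since a closed form (in particular any harmonic field) represents a class.

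A preliminary observation I would record first is that a closed form has vanishing $cE^k_N$-component: if $\beta\in cE^k_N$ is closed, then being co-exact it is also co-closed, so $\beta\in Z^k\cap cZ^k$; since the three summands of $A^k = cE^k_N\oplus(Z^k\cap cZ^k)\oplus E^k_D$ meet only in $0$, we conclude $\beta=0$. Consequently every closed form $\omega$ decomposes as $\omega = h + d\gamma$ with $h\in Z^k\cap cZ^k$ a harmonic field and $d\gamma\in E^k_D$, where $\gamma\in A^{k-1}_D$.

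For part (i) I would establish surjectivity and injectivity of $\alpha\mapsto[\alpha]$ from $Z^k\cap cZ^k\cap A^k_D$ to $H^k(M,\partial M;\RR)$. Surjectivity: represent a relative class by a closed Dirichlet form $\omega\in Z^k_D$ and write $\omega = h + d\gamma$ as above; since $d\gamma\in E^k_D\subset A^k_D$, the difference $h=\omega-d\gamma$ lies in $A^k_D$, so $h$ is a harmonic Dirichlet field with $[h]=[\omega]$. Injectivity: if $h\in Z^k\cap cZ^k\cap A^k_D$ satisfies $[h]=0$, then $h\in E^k_D$; but $h$ also lies in $Z^k\cap cZ^k$, and these two Hodge summands intersect only in $0$, forcing $h=0$.

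For part (ii) the argument is parallel but uses the Friedrichs refinement $Z^k\cap cZ^k = (Z^k\cap cZ^k\cap A^k_N)\oplus(E^k\cap cZ^k)$. Surjectivity: given a closed form $\omega$, reduce to its harmonic-field part $h$ (so $[\omega]=[h]$ in $H^k(M;\RR)$), then split $h=h_N+e$ with $h_N\in Z^k\cap cZ^k\cap A^k_N$ and $e\in E^k\cap cZ^k\subset E^k$ exact, whence $[\omega]=[h_N]$. Injectivity: if a harmonic Neumann field $h_N$ is exact, then $h_N\in E^k\cap cZ^k$ while also $h_N\in Z^k\cap cZ^k\cap A^k_N$, so by directness of the Friedrichs decomposition $h_N=0$. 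The only genuine obstacle I anticipate is the relative-cohomology identification in the first paragraph; once the Dirichlet complex $(A^\bullet_D,d)$ is known to compute $H^\bullet(M,\partial M;\RR)$, everything else is formal manipulation of the orthogonal summands.
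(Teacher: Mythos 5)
Your proof is correct, and it rests on the same two pillars as the paper's: the Hodge decomposition of Theorem~\ref{thm:hodgedecompositiontheoremformanifoldswithboundary} and the identification of $H^k(M,\partial M;\RR)$ with the cohomology of the Dirichlet complex $(A^\bullet_D,d)$. The difference lies in how the auxiliary vanishing statements are obtained. The paper proves $Z^k\cap cE^k_N=0$ and $Z^k\cap cE^k\cap A^k_D=0$ by explicit integration by parts (Claims~\ref{hodgedecompcorrollaryclaim1} and~\ref{hodgedecompcorrollaryclaim2}), and then invokes the Friedrichs refinement $Z^k\cap cZ^k=(Z^k\cap cE^k)\oplus(Z^k\cap cZ^k\cap A^k_D)$ even for part (i). You instead extract everything from the directness of the orthogonal decomposition: a closed co-exact form lies in two distinct summands and hence vanishes, and in part (i) you note that the harmonic-field component $h=\omega-d\gamma$ of a closed Dirichlet form is automatically Dirichlet (since $\omega$ and $d\gamma\in E^k_D\subset A^k_D$ both are), which makes the paper's second claim and the refined splitting unnecessary there; your injectivity arguments likewise reduce to the pairwise triviality of intersections of summands. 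This is slightly more economical, while the paper's Stokes-type computations re-derive the relevant orthogonality from scratch and are self-contained if one does not want to lean on the full strength of the stated direct sum. Both treatments rely on $d$ preserving $A^\bullet_D$, which you correctly justify via Lemma~\ref{lemma:hodgestaroperatorattheboundary}(i) and the commutation of pullback with $d$.
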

\begin{proof}~
\begin{itemize}
    \item[(i)] The relative cohomology $H^k(M,\partial M; \RR)$ is isomorphic to the homology of the chain complex $(A^k_D(M), d)$ of differential forms vanishing on the boundary along with the exterior derivative \cite[Chapter 5, Proposition~9.9]{Taylor2011}. With notation as in Definition~\ref{def:HWB-vector-spaces-of-forms-and-boundary-conditions}, $H^k(M,\partial M;\RR)$ is the quotient $Z^k\cap A^k_D/ E^k_D$. By Theorem~\ref{thm:hodgedecompositiontheoremformanifoldswithboundary},
    \begin{equation}
        \label{eq:hodgedecompositioncorollaryeq1}
        A^k = cE^k_N \oplus (Z^k\cap cE^k) \oplus (Z^k\cap cZ^k \cap A^k_D)\oplus E^k_D(M).
    \end{equation}
\begin{claim}
\label{hodgedecompcorrollaryclaim1}
$Z^k\cap cE^k_N =0$.
\end{claim}
\begin{proof}
Let $\alpha \in Z^k\cap cE^k_N$. Let $\alpha = d^*\beta$ for some $\beta \in A^{k+1}_N$. By Lemma~\ref{lemma:hodgestaroperatorattheboundary}, $(\star \beta)_{tan} \equiv 0$ and so $(\star \beta)|_{\partial M} \equiv 0$.
\begin{align*}
    \int_M \langle \alpha,\alpha \rangle d\vol =&\ \int_M \alpha\wedge  \star (d^*\beta) = (-1)^{n(k+2)+1+k(n-k)} \int_M \alpha \wedge d( \star \beta) \\
    =&\ (-1)^{n(k+2)+1+k(n-k)} \left(\int_M - (d\alpha)\wedge \star \beta + \int_M d(\alpha \wedge \star \beta)\right)\\
    =&\ 0+ (-1)^{2n+1-k^2} \int_{\partial M}\alpha \wedge \star \beta = 0,
\end{align*}
because $d\alpha = 0 $, $(\star \beta)|_{\partial M} \equiv 0$ vanishes on the boundary. Thus $\alpha = 0$.
\end{proof}
Combining \eqref{eq:hodgedecompositioncorollaryeq1} and Claim \ref{hodgedecompcorrollaryclaim1}, 
\begin{equation}
    \label{eq:hodgedecompositioncorollaryeq12}
    Z^k\cap A^k_D = (Z^k\cap A^k_D\cap cE^k) \oplus (Z^k\cap cZ^k\cap A^k_D) \oplus E^k_D.
\end{equation}
\begin{claim}
\label{hodgedecompcorrollaryclaim2}
$Z^k\cap cE^k \cap A^k_D =0$.
\end{claim}
\begin{proof}
 Fix $\alpha \in Z^k\cap cE^k\cap A^k_D$ and suppose $\alpha = d^*\beta$ for some $\beta \in A^{k+1}(M)$. By Stokes' theorem, since $\alpha \in A^k_D$
\begin{align*}
    \int_M\langle \alpha,\alpha \rangle d\vol =&\ \int_M \alpha\wedge \star (d^*\beta) = (-1)^{n(k+2)+1+k(n-k)} \int_M \alpha \wedge d(\star \beta) \\
    =&\ (-1)^{n(k+2)+1+k(n-k)} \left\{\int_M - (d\alpha)\wedge *\beta + \int_M d(\alpha \wedge \star \beta)\right\}\\
    =&\ 0+ (-1)^{2n+1-k^2} \int_{\partial M}\alpha \wedge \star \beta = 0
\end{align*}~
\begin{equation}
    \implies \alpha = 0. \tag*{\qedhere}
\end{equation}
\end{proof}
As a consequence of Claim \ref{hodgedecompcorrollaryclaim2} and \eqref{eq:hodgedecompositioncorollaryeq12},
\[Z^k\cap A^k_D = Z^k\cap cZ^k\cap A^k_D \oplus E^k_D\]
\[\implies H^k(M,\partial M;\RR) \cong \frac{Z^k\cap A^k_D}{E^k_D} \cong Z^k\cap cZ^k\cap A^k_D.\]

\item[(ii)] By De Rham's theorem $H^k(M;\RR) \cong Z^k /E^k$. Similar to part (i), by Theorem~\ref{thm:hodgedecompositiontheoremformanifoldswithboundary} and Claim \ref{hodgedecompcorrollaryclaim1}
\begin{equation}
    \label{eq:hodgedecompositioncorollaryeq13}
    Z^k = (Z^k\cap cZ^k\cap A^k_N) \oplus (E^k\cap cZ^k) \oplus E^k_D.
\end{equation}
But $E^k\cap cZ^k \oplus E^k_D \subset E^k$, so taking a quotient of \eqref{eq:hodgedecompositioncorollaryeq13},
\begin{equation}
    H^k(M) \cong \frac{Z^k}{E^k} \cong Z^k(M)\cap cZ^k(M) \cap A^k_N(M). \tag*{\qedhere}
\end{equation}
\end{itemize}
\end{proof}

\section{Moduli space of special Lagrangians}
\label{sec:HWB-moduli-space-of-special-lagrangians}

This section is concerned with Problem~\ref{problem:HWB-3.1} and contains proofs of Proposition~\ref{prop:HWB-tangentspacetospeciallagrangianswithboundary-1} and Theorem~\ref{thm:HWB-tangentspacetospeciallagrangianswithboundary-2}.

\subsection{Hodge dual of tangent 1-form}
\label{subsec:HWB-hodge-dual-of-tangent-1-form}

The two claims of Proposition~\ref{prop:HWB-tangentspacetospeciallagrangianswithboundary-1} are proved separately as Lemma~\ref{lemma:HWB-phit-is-closed} and Lemma~\ref{lemma:HWB-phitishodgestarofthetat}. The proof of Lemma~\ref{lemma:HWB-phit-is-closed} is analogous to the proof of Lemma~\ref{lemma:HWB-formula-for-derivative-of-path-of-Lagrangians}.

\begin{lemma}
\label{lemma:HWB-phit-is-closed}
Under the assumptions of Proposition~\ref{prop:HWB-tangentspacetospeciallagrangianswithboundary-1}, the $(n-1)$-form $\phi_t$ defined in \eqref{eq:HWB-thm-1-theta-phi-notation} is closed.
\end{lemma}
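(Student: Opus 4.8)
The plan is to mimic the proof of Lemma~\ref{lemma:HWB-formula-for-derivative-of-path-of-Lagrangians}.1 almost verbatim, replacing the symplectic form $\omega$ with $\Im(\Omega)$. The single structural input is the variational (Cartan) formula for the exterior derivative of a pulled-back form along a smooth path of maps: for any fixed differential form $\alpha$ on $X$ and any $t$,
\begin{equation*}
    d\left(f_t^*\, i_{\frac{df_t}{dt}} \alpha\right) = \frac{d}{dt}\left(f_t^* \alpha\right) - f_t^*\, i_{\frac{df_t}{dt}}\, d\alpha,
\end{equation*}
which is exactly the identity used in \eqref{eq:HWB-formula-for-derivative-of-path-of-Lagrangians-eq1} in the case $\alpha=\omega$. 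Applying it with $\alpha=\Im(\Omega)$ produces a formula for $d\phi_t$ whose two right-hand terms I will show both vanish.

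Next I establish the two vanishing facts. First, $\Im(\Omega)$ is closed: $\Omega$ is a holomorphic $(n,0)$-form on the complex $n$-fold $X$, so $\bar\partial\Omega=0$ by holomorphicity while $\partial\Omega=0$ for degree reasons (there are no $(n+1,0)$-forms), whence $d\Omega=0$ and therefore $d\,\Im(\Omega)=\Im(d\Omega)=0$. This kills the term $f_t^*\, i_{\frac{df_t}{dt}}\, d\,\Im(\Omega)$. Second, each $f_t$ is a special Lagrangian immersion, so by Definition~\ref{def:immersedlagrangiansandspeciallagrangians} we have $f_t^*\Im(\Omega)\equiv 0$ for every $t$; differentiating in $t$ annihilates the term $\frac{d}{dt}\bigl(f_t^*\Im(\Omega)\bigr)$. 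Combining the two, $d\phi_t=0$, which is the claim.

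There is essentially no serious obstacle here; the entire content lies in recalling the variational formula and the two standing facts, exactly as in the $\omega$-case treated in Lemma~\ref{lemma:HWB-formula-for-derivative-of-path-of-Lagrangians}.1. The only point I would state with some care is \emph{why} $d\,\Im(\Omega)=0$: unlike $d\omega=0$, which holds because $\omega$ is a Kähler form, closedness of $\Im(\Omega)$ rests on the complex-geometric fact that a holomorphic top form is automatically $d$-closed. Note finally that the boundary plays no role in this lemma, since closedness is a purely interior statement; the boundary behavior of $\phi_t$ (its Neumann condition) is a separate matter handled elsewhere in Proposition~\ref{prop:HWB-tangentspacetospeciallagrangianswithboundary-1}.
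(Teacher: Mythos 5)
Your proposal is correct and follows exactly the same route as the paper's proof: apply Cartan's formula to $\Im(\Omega)$, then kill one term using $d\,\Im(\Omega)=0$ (holomorphicity of $\Omega$) and the other using $f_t^*\Im(\Omega)\equiv 0$ for all $t$. The extra sentence justifying why a holomorphic top form is $d$-closed is a welcome clarification but does not change the argument.
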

\begin{proof}
Recall $f_t:= f(t,\cdot):L \rightarrow X$ is a smooth path of special Lagrangian immersions. By Cartan's formula for the Lie derivative,
\[\frac{d}{dt} f_t^*\Im(\Omega) = f_t^*\left(i_{\frac{df_t}{dt}} d \ \Im(\Omega) + d i_{\frac{df_t}{dt}}\ \Im(\Omega) \right).\]
\begin{align*}
    \implies \qquad d\phi_t = d\, f_t^*\left(i_{\frac{df_t}{dt}} \Im(\Omega) \right) =&\ f_t^*\left(d\, i_{\frac{df_t}{dt}} \Im(\Omega) \right) \\
    =&\ \frac{d}{dt} f_t^*\Im(\Omega) - f_t^*\left(i_{\frac{df_t}{dt}} d \ \Im(\Omega)  \right)
\end{align*}
But $d \Im(\Omega)=0$ since $\Omega$ is holomorphic and hence closed, and $f_t^*\Im(\Omega) = 0$ for all $t\in[0,1]$ since $f_t$ is a special Lagrangian immersion. Thus $d\phi_t = 0$.
\end{proof}

 Recall the forms $\theta_t$ and $\phi_t$ as in \eqref{eq:HWB-thm-1-theta-phi-notation}. Lemma~\ref{lemma:HWB-phitishodgestarofthetat}  asserts that For any fixed $t_0 \in [0,1]$, $\theta_{t_0}$ and $\phi_{t_0}$ are Hodge dual to each other whenever $f_{t_0}$ is a special Lagrangian immersion. This is a more general statement than Proposition~\ref{prop:HWB-tangentspacetospeciallagrangianswithboundary-1}.2, and the more general statement is used in the proof of Theorem~\ref{thm:HWB-tangentspacetospeciallagrangianswithboundary-2}.

 \begin{lemma}
    \label{lemma:HWB-phitishodgestarofthetat} 
    Let $(X,\omega, J, \Omega)$ be a Calabi--Yau manifold, and $L$ a compact manifold with boundary. Fix pairwise disjoint embedded Lagrangians $\Lambda_1,\dots, \Lambda_d \subset X$. Consider a smooth function $f: (-\epsilon, \epsilon) \times L \rightarrow X$ such that $f_t:=f(t,\cdot) \in C^\infty(L, X; \Lambda_1,\dots, \Lambda_d)$ for all $t\in (-\epsilon,\epsilon)$ (Definition~\ref{def:HWB-boundary-conditions-on-immersions-and-lagrangians}). Assume that $f_0$ is a special Lagrangian immersion, i.e., $f_0^* \Im(\Omega) \equiv 0$. Consider the following pair of differential forms on $L$,
    \begin{equation*}
        \begin{aligned}
        \theta_0:=&\ f_0^* i_{ \left. \frac{df_t}{dt} \right|_{t=0} } \omega \in A^1(L) \\
        \phi_0 :=&\ f_0^* i_{ \left. \frac{df_t}{dt} \right|_{t=0} } \Im(\Omega) \in A^{n-1}(L).
    \end{aligned}
    \end{equation*}
    Then $\phi_0 = \star_0 \theta_0$ where $\star_0$ is the Hodge star for the pullback metric $f_0^*g$. 
 \end{lemma}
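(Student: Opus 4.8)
The plan is to prove the identity pointwise. Both $\theta_0$ and $\phi_0$ depend only on the value of the variation field $V := \left.\tfrac{df_t}{dt}\right|_{t=0}$ at each point (not on its derivatives), and $\star_0$ is a purely algebraic operation on forms, so it suffices to fix $p\in L$, set $W := df_0(T_pL)\subset T_{f_0(p)}X$, and establish the linear-algebra identity $\bigl(i_V\Im(\Omega)\bigr)\big|_W = \star_0\bigl(i_V\omega\bigr)\big|_W$ on the single special Lagrangian $n$-plane $W$, for every $V\in T_{f_0(p)}X$. Here I use that $W$ is a special Lagrangian plane: $\omega|_W=0$ and, since $f_0$ is special Lagrangian, $\Im(\Omega)|_W\equiv 0$ (Definition~\ref{def:immersedlagrangiansandspeciallagrangians}).

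First I would strip off the tangential part of $V$. Writing $V = V^T + V^N$ with $V^T\in W$ and $V^N\in W^\perp$, the Lagrangian condition gives $\omega(V^T, df_0 Y)=0$ for all $Y$, so $V^T$ does not contribute to $\theta_0$; and since every argument of $\Im(\Omega)(V^T, df_0 Y_1,\dots, df_0 Y_{n-1})$ then lies in $W$, where $\Im(\Omega)$ vanishes, $V^T$ does not contribute to $\phi_0$ either. Hence I may assume $V\in W^\perp = JW$. Next I would choose an adapted frame: an orthonormal basis $e_1,\dots,e_n$ of $W$ oriented so that $\Re(\Omega)(e_1,\dots,e_n)=1$, which by Lemma~\ref{lemma:HWB-harveylawson-calibration-lemma} is precisely the orientation for which $\vol_{f_0^*g}=e^1\wedge\cdots\wedge e^n$ and hence defines $\star_0$ (Definition~\ref{def:ofhodgestaronmanifolds}). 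Then $e_1,\dots,e_n,Je_1,\dots,Je_n$ is a unitary-adapted orthonormal basis of $T_{f_0(p)}X$, and $V\in JW$ expands as $V=\sum_k c_k\,Je_k$, so by linearity it suffices to treat $V=Je_k$. In the complex coframe $dz_k := e^k + \sqrt{-1}\,(Je_k)^{\ast}$ one has $\Omega=\lambda\,dz_1\wedge\cdots\wedge dz_n$ for a unit constant $\lambda$ by the normalization \eqref{eq:HWB-calabi-Yau-equation}, and the two conditions $\Im(\Omega)|_W=0$ and $\Re(\Omega)|_W=\vol_{f_0^*g}$ force $\lambda=1$.

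This last identification is the crux: the adapted frame reduces the problem to the flat model $W=\RR^n\subset\CC^n$ with $\Omega=dz_1\wedge\cdots\wedge dz_n$, and there a direct calculation gives that $i_{Je_k}\omega|_W$ and $i_{Je_k}\Im(\Omega)|_W$ are, up to a universal sign, $e^k$ and $(-1)^{k-1}\,e^1\wedge\cdots\wedge\widehat{e^k}\wedge\cdots\wedge e^n$ respectively, while $\star_0 e^k=(-1)^{k-1}\,e^1\wedge\cdots\wedge\widehat{e^k}\wedge\cdots\wedge e^n$; matching these yields $\phi_0=\star_0\theta_0$, summing over $k$ and ranging over all $p\in L$. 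I expect the only genuinely delicate step to be this frame reduction together with the sign bookkeeping: using the Calabi--Yau normalization \eqref{eq:HWB-calabi-Yau-equation} and the special Lagrangian/calibration conditions to pin $\Omega$ to the standard holomorphic volume form in the adapted coframe, with the overall sign determined by the calibration orientation and the contraction convention $i_V\omega=\omega(V,\cdot)$. Once $\Omega$ is so identified, the remaining content is the elementary Hodge-star formula for an orthonormal coframe.
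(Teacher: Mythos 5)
Your argument is correct in outline but takes a genuinely different route from the paper. The paper argues invariantly: it introduces the metric dual $\xi_0$ of $\theta_0$, combines the identity $\star_0\theta_0=i_{\xi_0}\vol_{f_0^*g}$ with the calibration identity $f_0^*\Re(\Omega)=\vol_{f_0^*g}$ (Lemma~\ref{lemma:HWB-harveylawson-calibration-lemma}) and with $i_{JY}\Im(\Omega)=i_Y\Re(\Omega)$ to get $\star_0\theta_0=f_0^*\bigl(i_{Jdf_0(\xi_0)}\Im(\Omega)\bigr)$, and then shows $\left.\tfrac{df_t}{dt}\right|_{t=0}$ differs from $Jdf_0(\xi_0)$ by a tangential field, which $\Im(\Omega)$ kills on the special Lagrangian plane. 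You instead reduce to a pointwise computation in the flat model $\RR^n\subset\CC^n$ via an adapted unitary frame. Both proofs rest on the same two inputs---the tangential part of the variation field contributes to neither form, and the normalization \eqref{eq:HWB-calabi-Yau-equation} together with the special Lagrangian condition pins $\Omega$ to the standard holomorphic volume form in the adapted coframe (equivalently, the calibration identity)---so your route is legitimate and arguably more transparent about where the identity comes from, at the cost of frame-dependent sign bookkeeping that the invariant argument hides inside contraction identities. That bookkeeping is the one loose end you leave open: with the paper's conventions $i_V\omega=\omega(V,\cdot)$ and $g=\omega(\cdot,J\cdot)$ one computes $i_{Je_k}\omega|_W=-e^k$ while $i_{Je_k}\Im(\Omega)|_W=(-1)^{k-1}\,e^1\wedge\cdots\wedge\widehat{e^k}\wedge\cdots\wedge e^n$, so the two ``universal signs'' you defer are opposite and must actually be tracked to land on the sign asserted in the lemma; the same sensitivity enters the paper's own Claim~\ref{claim:HWB-phi-hodgestar-of-theta-claim3}, where the stated convention gives $\omega(V,w)=g(JV,w)$ rather than $g(-JV,w)$. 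Since only $d\star_0\theta_0=0$ is used downstream, the overall sign is harmless to the main theorems, but a complete writeup of your proof should pin it down rather than assert the signs match.
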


\begin{proof}
Let $\xi_0$ denote the vector field dual to the 1-form $\theta_0$ with respect to $f_0^* g$, i.e.,
\[\theta_0(Y) = f_0^*g(\xi_0, Y) \qquad \forall\ Y\in TL.\]
The goal is to compute $\star_0 \theta_0$ and show it equals $\phi_0$. The argument is split into Claims~\ref{claim:HWB-phi-hodgestar-of-theta-claim1}--\ref{claim:HWB-phi-hodgestar-of-theta-claim4}. These are first used to prove Lemma~\ref{lemma:HWB-phitishodgestarofthetat}, and four claims are proved subsequently.

\begin{claim}
\label{claim:HWB-phi-hodgestar-of-theta-claim1}
    $\displaystyle{i_{JY}\Im(\Omega) = i_Y \Re(\Omega)}$ for all $Y\in TX$.
\end{claim}

\begin{claim}
    \label{claim:HWB-phi-hodgestar-of-theta-claim2}
    $\displaystyle{\star_0 \theta_0 = i_{\xi_0} f_0^* \Re(\Omega).}$
\end{claim}

\begin{claim}
\label{claim:HWB-phi-hodgestar-of-theta-claim3}
    $\displaystyle{Jdf_0(\xi_0) - \left. \frac{df_t}{dt}\right|_{t=0} = df_0(\chi_0)}$ for some vector field $\chi_0\in TL$.
\end{claim}

\begin{claim}
\label{claim:HWB-phi-hodgestar-of-theta-claim4}
$\displaystyle{f_0^*\left( i_{ \left. \frac{df_t}{dt} \right|_{t=0}} \Im(\Omega) \right) = f_0^*\left( i_{Jdf_0(\xi_t)} \Im(\Omega) \right)}$
\end{claim}

By Claims~\ref{claim:HWB-phi-hodgestar-of-theta-claim1} and \ref{claim:HWB-phi-hodgestar-of-theta-claim2},
\begin{align*}
    \star_0 \theta_0 =&\ i_{\xi_0} f_0^* \Re(\Omega) \\
    =&\ f_0^*\left( i_{df_0(\xi_0)} \Re(\Omega) \right) \\
    =&\ f_0^* \left( i_{Jdf_0(\xi_0)}\Im(\Omega) \right).
\end{align*}
Note that the previous step uses the assumption that $f_0$ is special Lagrangian to apply Claim~\ref{claim:HWB-phi-hodgestar-of-theta-claim2}. Finally by Claim~\ref{claim:HWB-phi-hodgestar-of-theta-claim4} and the definition of $\phi_0$,
\begin{align*}
    \star \theta_0  =&\ f_0^* \left( i_{Jdf_0(\xi_0)}\Im(\Omega) \right) =  \\
    =&\ f_0^*\left( i_{ \left. \frac{df_t}{dt} \right|_{t=0}} \Im(\Omega) \right) = \phi_0.
\end{align*}
This completes the proof of Lemma~\ref{lemma:HWB-phitishodgestarofthetat}.
\end{proof}

\begin{proof}[Proof of Claim~\ref{claim:HWB-phi-hodgestar-of-theta-claim1}]
    Claim~\ref{claim:HWB-phi-hodgestar-of-theta-claim1} is a general statement about Calabi--Yau manifolds. Since $\Omega$ is an $(n,0)$-form, $i_{JY} \Omega = \sqrt{-1} i_Y\Omega$ for any $Y\in TX$.
    \begin{equation}
        i_{JY}\Im(\Omega) = \Im\left(i_{JY}\Omega \right) = \Im\left(\sqrt{-1} i_Y\Omega \right) = i_Y \Re(\Omega). \tag*{\qedhere}
    \end{equation}
\end{proof}

\begin{proof}[Proof of Claim~\ref{claim:HWB-phi-hodgestar-of-theta-claim2}]
Claim~\ref{claim:HWB-phi-hodgestar-of-theta-claim2} uses the assumption that $f_0$ is a special Lagrangian immersion by invoking Lemma~\ref{lemma:HWB-harveylawson-calibration-lemma}. By Definition \ref{def:ofhodgestaronmanifolds}
\[ \beta \wedge \star_0\theta_0 = \langle \beta, \theta_0 \rangle_{f_0^*g} \vol_{f_0^*g} = \left( i_{\xi_0}\beta \right)\vol_{f_0^*g} \qquad \forall\ \beta \in A^1(L). \]
By the product rule for interior products
\[0 = i_{Y}(\beta\wedge \vol_{f_0^*g}) = (i_{Y}\beta) \vol_{f_0^*g} - \beta \wedge i_{Y}\vol_{f_t^*g} \qquad \forall \ Y\in TL\]
\begin{align*}
    \implies \beta \wedge \star_0\theta_0  =&\ \left(i_{\xi_0}\beta \right) \vol_{f_0^*g} = \beta \wedge i_{\xi_0} \vol_{f_0^*g}, \\
     =&\ \beta\wedge i_{\xi_0} f_0^* \Re(\Omega), \qquad \forall\ \beta\in A^1(L).
\end{align*}
where the last line follows from Lemma~\ref{lemma:HWB-harveylawson-calibration-lemma} applied to the special Lagrangian immersion $f_0$. Since $\beta \in A^1(L)$ is arbitrary Claim~\ref{claim:HWB-phi-hodgestar-of-theta-claim2} follows.
\end{proof}

\begin{proof}[Proof of Claim~\ref{claim:HWB-phi-hodgestar-of-theta-claim3}]
    Claim~\ref{claim:HWB-phi-hodgestar-of-theta-claim3} is used to prove Claim~\ref{claim:HWB-phi-hodgestar-of-theta-claim4} and is property of the form $\theta_0$. By the definition of $\xi_0$ and $\theta_0$ \eqref{eq:HWB-formula-for-derivative-of-path-of-Lagrangians}
    \[\begin{aligned}
    g(df_0(\xi_0), df_0(Y)) = \theta_0(Y) =&\ f_0^* \left( i_{\frac{df_t}{dt}|_{t=0}} \omega \right)(Y) \\
    =&\ \omega\left(\left. \frac{df_t}{dt} \right|_{t=0}, df_t(Y) \right) \\
    =&\ g\left(-J\, \left. \frac{df_t}{dt} \right|_{t=0}, df_0(Y)\right) \qquad \forall\ Y\in TL.
    \end{aligned}
    \]
    The vector field $df_0(\xi_0) + J \left. \frac{df_t}{dt} \right|_{t=0}$ is orthogonal to $df_0(TL)$. Since $df_0(T_pL)$ is a Lagrangian subspace of $T_{f_0(p)}X$ for each $p\in L$, $Jdf_0(\xi_0) - \left. \frac{df_t}{dt} \right|_{t=0} \in df_0(TL)$. Thus there exists a vector field $\chi_0\in TL$ such that $Jdf_t(\xi_0) - \left. \frac{df_t}{dt} \right|_{t=0} = df_t(\chi_0)$.
\end{proof}

\begin{proof}[Proof of Claim~\ref{claim:HWB-phi-hodgestar-of-theta-claim4}]
    Recall that $df_0(T_pL)$ is a special Lagrangian plane and $f_0^*\Im(\Omega) = 0$ since $f_0$ is a special Lagrangian immersion. 
\[\Im(\Omega) (df_0(Y_1),\dots, df_0(Y_n)) = 0 \qquad \forall\ Y_1,\dots, Y_n \in TL. \]
Combining this observation for $Y_1 = \chi_0$, along with Claim~\ref{claim:HWB-phi-hodgestar-of-theta-claim3},
\begin{align*}
    \Im(\Omega) \left(\left. \frac{df_t}{dt} \right|_{t=0} - Jdf_0(\xi_0), df_0(Y_2), \dots, df_0(Y_{n})\right) =&\ \Im(\Omega) \left(df_0(\chi_0), df_0(Y_2), \dots, df_0(Y_{n} ) \right) \\
    =&\ 0 \qquad \forall\ Y_2,\dots, Y_n \in TL \\
    \implies\qquad  f_0^*\left( i_{ \left. \frac{df_t}{dt} \right|_{t=0} - Jdf_0(\xi_0)} \Im(\Omega) \right) =&\ 0
\end{align*}
\begin{equation}
        \implies \qquad f_0^*\left( i_{ \left. \frac{df_t}{dt} \right|_{t=0}} \Im(\Omega) \right) = f_0^*\left( i_{Jdf_0(\xi_t)} \Im(\Omega) \right).\tag*{\qedhere}
    \end{equation}
\end{proof}

\subsection{Proof of Proposition~\ref{prop:HWB-tangentspacetospeciallagrangianswithboundary-1}}
\label{subsec:HWB-proof-of-thm-1-1}

\begin{proof}[\text{Proof of Proposition~\ref{prop:HWB-tangentspacetospeciallagrangianswithboundary-1}}]
    By Lemma~\ref{lemma:HWB-formula-for-derivative-of-path-of-Lagrangians}, $\theta_t$ is a closed 1-form and pulls back to zero on the boundary $\partial L$. By Lemma~\ref{lemma:hodgestaroperatorattheboundary} $\theta_t$ satisfies Dirichlet boundary conditions (Definition~\ref{def:boundarydecompositionofformsontheboundary}(3)). By Lemma~\ref{lemma:HWB-phit-is-closed}, $\phi_t$ is a closed $(n-1)$-form. Finally, $\phi_{t_0} = \star_{t_0} \theta_{t_0}$ follows by applying Lemma~\ref{lemma:HWB-phitishodgestarofthetat} along the path $f_t$ for each $t_0\in[0,1]$.
\end{proof}

\subsection{Special Lagrangian operator}
\label{subsec:HWb-special-Lagrangian-operator}
     
     This section constructs the special Lagrangian operator \eqref{eq:HWB-linearization-of-SLAG-2} and proves that it is elliptic (Proposition~\ref{prop:HWB-ellipticity-of-SLAG-operator}).

     Fix a free special Lagrangian immersion $f_0:L \rightarrow X$ representing an immersed special Lagrangian $\zcal_0 \in \scal\lcal(X,L:\Lambda_1,\dots,  \Lambda_d)$. Recall the connection $\nabla^\Lambda$ from Propositions~\ref{prop:HWB-boundary-Lagrangian-vector-bundle-connection-2} and the vector bundle $E$ from Propositions~\ref{prop:HWB-boundary-Lagrangian-vector-bundle-connection}. By Corollary \ref{cor:HWB-boundary-lagrangian-deformation-space}, for all $\theta \in A^1_D(L)$, $f_\theta:L \rightarrow X$---defined by \eqref{eq:HWB-normal-field-to-deformation-formula-2}---is a free immersion satisfying boundary conditions $\Lambda_1,\dots, \Lambda_d$. \eqref{eq:HWB-normal-field-to-deformation-formula-2} is recalled again for the reader's convenience:
     \[f_{\theta}: L \rightarrow X,\qquad  f_{\theta} := \exp^\Lambda(\Phi^{-1}\theta ) \circ f,\]
     where $\exp^\Lambda$ is the exponential map for $\nabla^\Lambda$ from Proposition~\ref{prop:HWB-boundary-Lagrangian-vector-bundle-connection-2}, and $\Phi$ is defined by Proposition~\ref{prop:HWB-boundary-Lagrangian-vector-bundle-connection}(d). $A^1_D(L)$ is the natural domain for the \textit{special Lagrangian operator} operator
     \begin{equation}
         \label{eq:HWB-linearization-of-SLAG-2}
         F:A^1_D(L) \rightarrow A^2(L)\oplus A^{n}(L), \qquad F(\theta) := f_{\theta}^*\omega \oplus f_{\theta}^* \Im(\Omega).
     \end{equation}
     For sufficiently small $\theta \in A^1_D(L)$, $f_\theta$ is a special Lagrangian immersion with boundary conditions $\Lambda_1,\dots, \Lambda_d$ if and only if $F(\theta) = 0$ by Definition~\ref{def:immersedlagrangiansandspeciallagrangians}. $F$ turns out to be an elliptic operator with surjective linearization at the origin.
     
     \begin{proposition}
     \label{prop:HWB-ellipticity-of-SLAG-operator}
         Let $f_0:L \rightarrow X$ be smooth free special Lagrangian immersion with boundary conditions $\Lambda_1,\dots, \Lambda_d$ (Definition~\ref{def:HWB-boundary-conditions-on-immersions-and-lagrangians}). Let $f_\theta$ and $F$ be as in \eqref{eq:HWB-normal-field-to-deformation-formula-2} and \eqref{eq:HWB-linearization-of-SLAG-2} respectively. Then the following hold.
         \begin{enumerate}[label={(\roman*)}]
         \item $F(\theta) \in dA^1_D(L) \oplus dA^{n-1}(L)$ for all $\theta \in A^1_D(L)$ (Definition \ref{def:HWB-vector-spaces-of-forms-and-boundary-conditions}).
         \item The linearization of $F$ at the origin is
         \begin{equation}
         \label{eq:HWB-formula-derivative-SLAG-operator}
         d_0F(\theta) = d\theta \oplus d\star_0 \theta
         \end{equation}
         where $\star_0$ is the Hodge star operator for the pullback metric $f_0^*g$.
         \item $\ker d_0F$ is the space of harmonic 1-fields on $L$ (Definition \ref{def:harmonicfields}) with Dirichlet boundary conditions.
         \item By (i), the linearization $d_0F$ takes values in $dA^1_D(L) \oplus dA^{n-1}(L)$. The linearization is surjective onto this subspace as a map from $C^{k+1,\alpha}$ forms to $C^{k,\alpha}$ forms, i.e.,
         \[d_0F: C^{k+1,\alpha}\cap A^1_D(L) \rightarrow C^{k,\alpha} \cap dA^1_D(L) \oplus C^{k,\alpha} \cap dA^{n-1}(L)\]
         is surjective for all $k\ge 1$ and $\alpha \in (0,1)$.
         \end{enumerate}
     \end{proposition}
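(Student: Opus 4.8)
The plan is to obtain (i)--(iii) by differentiating along the one-parameter family $t \mapsto f_{t\theta}$ and invoking the lemmas of Section~\ref{subsec:HWB-hodge-dual-of-tangent-1-form}, and then to reduce the surjectivity in (iv) to a scalar Dirichlet problem. For (i) I would run the Cartan-formula computation of Lemma~\ref{lemma:HWB-formula-for-derivative-of-path-of-Lagrangians} along the whole path rather than only at $t=0$. Setting $V_t := \frac{d}{ds} f_{s\theta}\big|_{s=t}$, closedness of $\omega$ gives $\frac{d}{dt} f_{t\theta}^*\omega = d\big(f_{t\theta}^* i_{V_t}\omega\big)$, and since $f_0 = f$ is Lagrangian,
\[
f_\theta^*\omega \;=\; d\!\left( \int_0^1 f_{t\theta}^*\big(i_{V_t}\omega\big)\, dt \right).
\]
Because each $f_{t\theta}$ satisfies the boundary conditions (Corollary~\ref{cor:HWB-boundary-lagrangian-deformation-space}(i)), at a boundary point $p \in C_i$ both $V_t(p)$ and $df_{t\theta}(T_pC_i)$ lie in $T_{f_{t\theta}(p)}\Lambda_i$, on which $\omega$ vanishes; hence each integrand, and therefore the primitive, lies in $A^1_D(L)$, giving $f_\theta^*\omega \in dA^1_D(L)$. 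The identical argument applied to $\Im(\Omega)$ (closed, with $f_0^*\Im(\Omega)=0$ as $f_0$ is special Lagrangian) gives $f_\theta^*\Im(\Omega) \in dA^{n-1}(L)$, no boundary condition being needed on the primitive.

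For (ii) I would differentiate these two identities at $t=0$. Corollary~\ref{cor:HWB-boundary-lagrangian-deformation-space}(i) gives $\frac{d}{dt} f_{t\theta}\big|_{t=0} = \Phi^{-1}\theta$ and $f_0^*(i_{\Phi^{-1}\theta}\omega)=\theta$, so the $\omega$-component linearizes to $d\theta$; the $\Im(\Omega)$-component linearizes to $d\big(f_0^* i_{\Phi^{-1}\theta}\Im(\Omega)\big)$, and since $f_0$ is special Lagrangian, Lemma~\ref{lemma:HWB-phitishodgestarofthetat} identifies $f_0^* i_{\Phi^{-1}\theta}\Im(\Omega) = \star_0\theta$. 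Thus $d_0F(\theta) = d\theta \oplus d\star_0\theta$. Part (iii) is then immediate: $\theta \in \ker d_0F$ means $d\theta = 0$ and $d\star_0\theta=0$, and as $d^* = -\star_0 d\star_0$ on $1$-forms the latter is equivalent to $d^*\theta = 0$, whence $\ker d_0F = Z^1 \cap cZ^1 \cap A^1_D(L)$, the harmonic fields with Dirichlet conditions.

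The substance is (iv). The key move is that for a $1$-form $\theta$ the equation $d\star_0\theta = \eta_2$ is, after applying $\star_0$, equivalent to the scalar equation $d^*\theta = -\star_0\eta_2 =: g$. Given $(\eta_1,\eta_2)$ in the target, I would write $\eta_1 = d\alpha_0$ with $\alpha_0 \in C^{k+1,\alpha}\cap A^1_D(L)$ and look for $\theta = \alpha_0 + du$, where $u$ is a function vanishing on $\partial L$; then $du \in A^1_D(L)$ by Lemma~\ref{lemma:hodgestaroperatorattheboundary}(i), the first equation $d\theta = d\alpha_0 = \eta_1$ holds automatically, and the second becomes the scalar Dirichlet problem
\[
\Delta u \;=\; g - d^*\alpha_0, \qquad u|_{\partial L} = 0,
\]
for the Laplace--Beltrami operator $\Delta = d^*d$ of $f_0^*g$ on functions. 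This is uniquely solvable with $u \in C^{k+2,\alpha}(L)$ by Schauder theory --- there is no compatibility obstruction and the kernel is trivial by the maximum principle --- and then $\theta = \alpha_0 + du \in C^{k+1,\alpha}\cap A^1_D(L)$ realizes $d_0F(\theta) = (\eta_1,\eta_2)$.

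The hard part is the opening step of (iv): producing the Dirichlet primitive $\alpha_0$ of $\eta_1$ at regularity $C^{k+1,\alpha}$ (rather than an arbitrary smooth primitive), which is where one must appeal to the Hodge decomposition for manifolds with boundary (Theorem~\ref{thm:hodgedecompositiontheoremformanifoldswithboundary}) together with elliptic regularity, and the verification that the ansatz $\theta = \alpha_0 + du$ genuinely decouples the system into a single scalar Dirichlet equation. This generalizes the cylinder computation of Solomon--Yuval, in which $H^1(L,\partial L;\RR)\cong \RR$ forces closed Dirichlet $1$-forms to be exact and the whole problem collapses to the scalar Laplacian; for general $L$ the non-exact directions are precisely the harmonic fields of (iii), which lie in the kernel and hence pose no obstruction to surjectivity onto the exact target $dA^1_D(L)\oplus dA^{n-1}(L)$.
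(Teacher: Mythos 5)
Your proposal is correct and follows essentially the same route as the paper: (i)--(iii) via the Cartan-formula homotopy argument along $s\mapsto f_{s\theta}$ together with Lemma~\ref{lemma:HWB-phitishodgestarofthetat}, and (iv) by the ansatz $\theta = \alpha_0 + du$ reducing surjectivity to the scalar Dirichlet problem $\Delta u = \star_0\eta_2 - d^*\alpha_0$, $u|_{\partial L}=0$, solved by Schauder theory. The only cosmetic difference is that the paper parametrizes the target directly as $d\theta\oplus d\phi$ with $\theta\in C^{k+1,\alpha}\cap A^1_D(L)$ given, so the ``Dirichlet primitive at the right regularity'' step you flag is absorbed into the definition of the codomain rather than argued separately.
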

     \begin{proof}~
         \begin{enumerate}[label={(\roman*)}]
             \item Let $\beta$ be any closed differential form on $X$ such that $f_0^*\beta = 0$. Then by Cartan's formula for Lie derivatives,
             \[\begin{aligned}
                 f_{\theta}^*\beta =&\ f_0^*\beta + \int_{0}^1 \frac{d}{ds} (f_{s\theta}^*\beta) ds\\
                 =&\ \int_{0}^1 f_{s\theta}^*\left( di_{\frac{df_{s\theta}}{ds}} \beta + i_{\frac{df_{s\theta}}{ds}} d\beta \right) ds \\
                 =&\ d \left(\int_{0}^1 f_{s\theta}^* i_{\frac{df_{s\theta}}{ds}} \beta ds\right).
             \end{aligned}\]
             Applying this argument to $\beta = \omega, \Im(\Omega)$ shows that $f_\theta^* \omega$ and $f_{\theta}^*\Im(\Omega)$ are exact. Finally, observe that for $\beta = \omega$,
             \[\begin{aligned}
                 \left( f_{s\theta}^* i_{\frac{df_{s\theta}}{ds}} \omega \right)(Y) =&\ \left(f_{s\theta}^* i_{\frac{df_{s\theta}}{ds}} \omega \right)(Y) \\
                 =&\ \omega\left( \frac{df_{s\theta}}{ds}, df_{s\theta}(Y) \right) \qquad \forall\ Y\in T\partial L.
             \end{aligned}\]
             Fix $p\in C_i$ and recall that $f_{s\theta}(C_i) \subset \Lambda_i$ for all $i$. Then by construction of $f_\theta$, $\frac{df_{s\theta}}{ds}$ is tangent to $\Lambda_i$ and so is $df_{s\theta}(Y)$. Since $\Lambda_i$ is Lagrangian, the RHS above will vanish. Conclude that
             \[ \left(f_{s\theta}^* i_{\frac{df_{s\theta}}{ds}} \omega \right)(Y) = 0\qquad \forall\ Y\in T\partial L,\]
             \[\implies \quad \left(\int_{0}^1 f_{s\theta}^* i_{\frac{df_{s\theta}}{ds}} \omega ds\right) \in  A^1_D(L)\]
             \[\implies \quad f_\theta^*\omega = d\left(\int_{0}^1 f_{s\theta}^* i_{\frac{df_{s\theta}}{ds}} \omega ds\right) \in  dA^1_D(L).\]
            This completes the proof of (i).
    
             \item Computing the linearization of $F $ at the origin,
                 \[\begin{aligned}
                     d_0F(\theta) =&\ \left. \frac{d}{ds} F(s\theta) \right|_{s=0} = \left. \frac{d}{ds}  f_{s\theta}^*\omega \oplus f_{s\theta}^* \Im(\Omega) \right|_{s=0} \\
                     =&\ f_{0}^*\left( di_{\Phi^{-1}\theta} \omega +  i_{\Phi^{-1}\theta} d \omega \right) \oplus f_{0}^*\left( di_{\Phi^{-1}\theta} \Im(\Omega)  +  i_{\Phi^{-1}\theta} d \Im(\Omega) \right) \\
                     =&\ f_{0}^*\left( di_{\Phi^{-1}\theta} \omega \right) \oplus f_{0}^*\left( di_{\Phi^{-1}\theta} \Im(\Omega) \right),\\
                     =&\ d\theta \oplus df_{0}^*\left( i_{\Phi^{-1}\theta} \Im(\Omega) \right),
                 \end{aligned}\]
                 where the second line follows by Cartan's formula for Lie derivatives and the formula $\frac{df_{s\theta}}{ds}|_{s=0} = \Phi^{-1} \theta\circ f_0$, the second line by closedness of $\omega, \Im(\Omega)$, and the final line by the definition of $\Phi$. Finally by \eqref{eq:HWB-normal-field-to-deformation-formula-3} and Lemma~\ref{lemma:HWB-phitishodgestarofthetat},
                 \[f_{0}^*\left( i_{\Phi^{-1}\theta} \Im(\Omega) \right) = \star_0 f_{0}^*\left( i_{\Phi^{-1}\theta} \omega \right) = \star_0 \theta\]
                 since $\frac{df_{s\theta}}{ds}|_{s=0} = \Phi^{-1} \theta\circ f_0$, where $\star_0$ is the Hodge star operator for $f_0^*g$. So
                 \begin{equation*}
                     d_0F(\theta) = d\theta \oplus d\star_0 \theta.
                 \end{equation*}
            
            \item If $d_0F(\theta) = 0$ then $d\theta = 0$ and $d\star \theta = 0$. Thus $\theta$ is closed and co-closed making it a harmonic field. Also note that any element in the domain of $d_0F$ has Dirichlet boundary conditions.

            \item Pick an arbitrary element from the codomain of $d_0F$, i.e., pick
            \begin{equation}
                \label{eq:HWB-SLAG-def-arbitrary-element}
                \theta \in C^{k+1,\alpha} \cap A^1_D(L),\quad \theta|_{\partial L} \equiv 0, \quad \phi \in C^{k+1,\alpha} \cap A^{n-1}(L).
            \end{equation}
            Then $d\theta \oplus d\phi$ is an arbitrary element of the codomain of $d_0F$. The strategy will be pick find a $C^{k+2,\alpha}$ function $f$, vanishing on the boundary such that $d_0F(\theta + df) = d\theta \oplus d\phi$.
            \begin{claim}
            \label{claim:HWB-elliptic-regularity}
                Let $L$ be a smooth Riemannian manifold with boundary. Fix $\theta \in C^{k+1,\alpha} \cap A^1_D(L)$ and $\phi \in C^{k+1,\alpha} A^{n-1}(L)$. Then there exists $f\in C^{k+2, \alpha}$ satisfying
                \[d\star\theta + d\star d f = d\phi.\]
                \[\begin{aligned}
                    d\star\theta + d\star d f = d\phi, \qquad &\text{on } L,\\
                    f \equiv 0 \qquad &\text{on } \partial L.
                \end{aligned}\]
            \end{claim}
            \begin{proof}
                Apply Hodge star to the given equation to get
                \begin{equation}
                    \label{eq:HWB-hodge-elliptic-PDE}
                    \begin{aligned}
                    \Delta f = \star d\star df = \star d\phi - \star d\star \theta, \qquad &\text{on } L,\\
                    f = 0 \qquad &\text{on } \partial L.
                \end{aligned}
                \end{equation}
                The RHS of the equation above is in $C^{k, \alpha}$. By the existence and regularity of solutions to elliptic PDE (Taylor \cite[Proposition 1.1, Proposition 1.6]{Taylor2011}), the PDE \eqref{eq:HWB-hodge-elliptic-PDE} admits a unique solution in $C^{k+2,\alpha}$.
            \end{proof}

            Apply Claim~\ref{claim:HWB-elliptic-regularity} to the given pair $\theta, \phi$ satisfying \eqref{eq:HWB-SLAG-def-arbitrary-element}, to obtain $f\in C^{k+2,\alpha}(L)$, vanishing on the boundary such that
            \[d_0F(\theta + df) = d\theta \oplus d \star (\theta + df) =  d\theta \oplus d\phi.\]
            $\theta + df \in C^{k+1,\alpha} \cap A^1_D(L)$ by properties of $f$, and so $d_0F$ is surjective from $C^{k+1}$ forms to $C^{k}$ forms. \qedhere
         \end{enumerate}
     \end{proof}

     \begin{corollary}
         \label{corollary:HWB-lifted-local-coordinate-chart}
         Fix a free immersed special Lagrangian $\zcal_0 \in \scal\lcal(X,L; \Lambda_1,\dots, \Lambda_d)$ and a free special Lagrangian immersion $f_0:L \rightarrow X$ representing it. Fix a basis
         \[\theta_1,\dots \theta_m  \in \ker d_0 F = Z^1_D \cap cZ^1(L)\]
         of harmonic fields with Dirichlet boundary conditions on $L$, with respect to the pullback metric $f_0^*g$.  Let $t_1,\dots, t_m$ denote coordinates on $\RR^m$. There also a simply-connected domain $0\in U \subset \RR^m$ and a smooth map
         \[f: U\times L \rightarrow X \]
         satisfying the following:
         \begin{enumerate}[label={(\roman*)}]
             \item $f(t,\cdot):L \rightarrow X$ is a free special Lagrangian immersion with boundary in $\Lambda_1,\dots, \Lambda_d$ for all $t \in U$,
             \item $f(0,\cdot) = f_0 $, the given immersion representing $\zcal_0$, and
             \item  $\displaystyle{f_0^* \left( \left. i_{\partial_{i} f}  \omega \right|_{t=0} \right) = \theta_i} $ for $i =1,\dots, m$, where $\partial_i f := \frac{\partial f}{ \partial t_i}$.
         \end{enumerate}
         Consequently, the map $t \mapsto [f_t]$ defines a smooth diffeomorphism (Definition~\ref{def:smoothpathoflagrangians}) from $U\subset \RR^m$ to some neighborhood $\ucal \subset \scal\lcal (X,L; \Lambda_1, \dots, \Lambda_d)$ of $\zcal_0$ mapping $0\in U$ to $\zcal_0 \in \ucal$.
     \end{corollary}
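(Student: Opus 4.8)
The plan is to apply the implicit function theorem in Hölder spaces to the special Lagrangian operator $F$ of \eqref{eq:HWB-linearization-of-SLAG-2}, extract the finite-dimensional family of solutions tangent to the prescribed harmonic fields $\theta_1,\dots,\theta_m$, and then upgrade this family to a genuine smooth chart on the quotient moduli space. First I would fix $k\ge 1$, $\alpha\in(0,1)$ and regard $F$ as a map $C^{k+1,\alpha}\cap A^1_D(L)\to C^{k,\alpha}\cap\bigl(dA^1_D(L)\oplus dA^{n-1}(L)\bigr)$, which is legitimate by Proposition~\ref{prop:HWB-ellipticity-of-SLAG-operator}(i), the target being a closed, hence Banach, subspace. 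By Proposition~\ref{prop:HWB-ellipticity-of-SLAG-operator}(ii)--(iv) its linearization at the origin is $d_0F(\theta)=d\theta\oplus d\star_0\theta$, this linearization is surjective onto the target, and its kernel is the space $K:=Z^1_D\cap cZ^1(L)$ of harmonic fields with Dirichlet boundary conditions, which by Corollary~\ref{cor:harmonicfieldsvanishingontheboundaryandcohomology}(i) is finite-dimensional of dimension $m=\dim H^1(L,\partial L;\RR)$. I would then split $C^{k+1,\alpha}\cap A^1_D(L)=K\oplus W$ with $W$ a closed complement (say the intersection of the Hölder space with the $L^2$-orthogonal complement of $K$), so that $d_0F|_W$ is a Banach-space isomorphism onto the target.

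Writing a general element as $(u,w)\in K\times W$, the partial derivative $\partial_w F(0,0)=d_0F|_W$ is invertible, so the implicit function theorem produces a neighborhood $V\ni 0$ in $K$ and a smooth map $g:V\to W$ with $g(0)=0$ and $d_0g=0$ whose graph $\{u+g(u):u\in V\}$ coincides with $F^{-1}(0)$ near the origin. Using the given basis I would set $u(t)=\sum_i t_i\theta_i$ and $\Theta(t):=u(t)+g(u(t))$ for $t$ in a small simply-connected $U\subset\RR^m$, and define $f(t,\cdot):=f_{\Theta(t)}$ through \eqref{eq:HWB-normal-field-to-deformation-formula-2}. Property (i) is then immediate, since $F(\Theta(t))=0$ forces $f_{\Theta(t)}^*\omega=0$ and $f_{\Theta(t)}^*\Im(\Omega)=0$, while $f_{\Theta(t)}$ is a free immersion with boundary on $\Lambda_1,\dots,\Lambda_d$ by Corollary~\ref{cor:HWB-boundary-lagrangian-deformation-space}(i). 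Property (ii) follows from $\Theta(0)=0$, giving $f(0,\cdot)=f_0$. For (iii), since $d_0g=0$ we have $\partial_{t_i}\Theta(0)=\theta_i$; because the differential of $\theta\mapsto f_\theta$ at $\theta=0$ sends a tangent vector $\eta$ to the vector field $\Phi^{-1}\eta$ along $f_0$ (the computation in the proof of Corollary~\ref{cor:HWB-boundary-lagrangian-deformation-space}(i)), we obtain $\partial_{t_i}f|_{t=0}=\Phi^{-1}\theta_i$, whence $f_0^*(i_{\partial_i f|_{t=0}}\omega)=\Phi(\Phi^{-1}\theta_i)=\theta_i$ by Proposition~\ref{prop:HWB-boundary-Lagrangian-vector-bundle-connection}(d).

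It remains to promote $f$ to a smooth chart. The map $t\mapsto\Theta(t)$ is smooth into $C^{k+1,\alpha}$, but joint $C^\infty$ smoothness of $f:U\times L\to X$ requires an elliptic bootstrap: each $f_{\Theta(t)}$ solves the elliptic system $F=0$, so $\Theta(t)$ is in fact smooth in $x$ and independent of the auxiliary $(k,\alpha)$, and differentiating the family in $t$ while inverting the elliptic linearization upgrades the regularity in $t$ as well. This exhibits $f$ as a smooth lift, so $t\mapsto[f_t]$ is smooth in the sense of Definition~\ref{def:smoothpathoflagrangians}. Finally I would check that $t\mapsto[f_t]$ is a bijection onto a neighborhood $\ucal$ of $\zcal_0$: surjectivity follows from Corollary~\ref{cor:HWB-boundary-lagrangian-deformation-space}(ii), which realizes any nearby immersed special Lagrangian as $f_\theta\circ\varphi$ with $\theta$ solving $F(\theta)=0$, forcing $\theta=\Theta(t)$ for some $t\in U$; injectivity, and the transversality needed to conclude that distinct $\Theta(t)$ give distinct classes $[f_t]$, rest on the fact that the deformation directions $\Phi^{-1}\theta_i$ are sections of $E$, which is complementary to $df_0(TL)$ by Proposition~\ref{prop:HWB-boundary-Lagrangian-vector-bundle-connection}(b), so the $E$-valued parametrization is a slice for the $\Diff(L)$-action.

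The main obstacle is this last point: verifying that the $1$-form parametrization descends faithfully to the $\Diff(L)$-quotient and captures every nearby special Lagrangian without redundancy, together with the elliptic-regularity bootstrap needed to pass from a smooth map between Hölder spaces to a $C^\infty$ map on $U\times L$. The implicit-function-theorem step itself is routine once Proposition~\ref{prop:HWB-ellipticity-of-SLAG-operator} is available.
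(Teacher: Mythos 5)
Your proposal is correct and follows essentially the same route as the paper: apply the implicit function theorem to the special Lagrangian operator $F$ of \eqref{eq:HWB-linearization-of-SLAG-2} using Proposition~\ref{prop:HWB-ellipticity-of-SLAG-operator}, realize $F^{-1}(0)$ as a graph over $\ker d_0F = Z^1_D\cap cZ^1(L)$, define $f(t,\cdot):=f_{\Theta(t)}$ via \eqref{eq:HWB-normal-field-to-deformation-formula-2}, and invoke Corollary~\ref{cor:HWB-boundary-lagrangian-deformation-space} to identify the result with a neighborhood in $\scal\lcal$. You are in fact more explicit than the paper about the Banach-space splitting, the elliptic bootstrap for joint smoothness of $f$ on $U\times L$, and the slice property for the $\Diff(L)$-action, all of which the paper's proof leaves implicit.
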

     \begin{proof}
         By Proposition~\ref{prop:HWB-ellipticity-of-SLAG-operator}, $d_0F$ is surjective and has a finite-dimensional kernel. $F^{-1}(0)$ is a smooth manifold in a small neighborhood of the origin. Explicitly, the map $G(\theta) := (\theta , F(\theta))$ is a diffeomorphism for $\theta$ is a small neighborhood of the origin in $ \ker d_0F$. But by the 1-1 correspondence of Corollary \ref{cor:HWB-boundary-lagrangian-deformation-space}, every free smooth immersion with boundary condition $\Lambda_1, \dots, \Lambda_d$ in a neighborhood of $f_0$ is of the form $f_\theta$ for some 1-form $\theta$ on $L$ vanishing on the boundary of $L$, and a neighborhood of $\zcal_0$ in $\scal\lcal(X,L; \Lambda_1,\dots, \Lambda_d)$ is identified with a neighborhood of $F^{-1}(0)$. Consider the isomorphism 
         \[\RR^m \rightarrow \ker d_0F, \qquad t = (t_1,\dots, t_m) \mapsto \theta_t := \sum_{i=1}^n t_i \theta_i.\]
         Combining all of these observations, a neighborhood $U$ of $0 \in \RR^m$ is diffeomorphic to a neighborhood $\ucal$ of $\zcal_0 \in \scal\lcal (X,L; \Lambda_1, \dots, \Lambda_d)$ via the map 
         \[ U \ni t \mapsto \theta_t = \sum_{i=1}^n t_i \theta_i \mapsto G^{-1}(\theta_t, 0) \mapsto f_{\theta_t} \mapsto [ f_{\theta_t} ] \in \scal\lcal (X,L; \Lambda_1, \dots, \Lambda_d)\]
         Now define $f: U\times L \rightarrow X $ by
         \[f(t,\cdot) := f_{\theta_t} \qquad \forall\ t\in U. \]
         Then $f$ satisfies the required properties. $f(t,\cdot)$ is a free special Lagrangian immersion by definition and $f(0,\cdot) = f_0$. Finally by \eqref{eq:HWB-normal-field-to-deformation-formula-3},
         \begin{equation}
             \frac{\partial f}{\partial t_i} = \frac{\partial f_{\theta_{t}}}{\partial t_i} \quad \implies \quad f_0^* \left( i_{ \partial_i f |_{t=0} } \omega \right)  = \frac{\partial \theta_{t}}{\partial t_i} = \theta_i. \tag*{\qedhere}
         \end{equation}
     \end{proof}

     \subsection{Proof of Theorem~\ref{thm:HWB-tangentspacetospeciallagrangianswithboundary-2}}
     \label{subsec:HWB-proof-of-thm-2-1}

\begin{proof}[Proof of Theorem~\ref{thm:HWB-tangentspacetospeciallagrangianswithboundary-2}]
Fix a special Lagrangian immersion $f_0:L \rightarrow X$ representing a generic element $[f_0] \in \scal\lcal(X,L; \Lambda_1, \dots, \Lambda_d)$. By Corollary \ref{cor:HWB-boundary-lagrangian-deformation-space}.(ii), every free smooth immersion with boundary condition $\Lambda_1, \dots, \Lambda_d$ in a neighborhood of $f_0$ is of the form $f_\theta$ for some 1-form $\theta$ modulo the reparametrization action of $\Diff(L)$. And so a neighborhood of $[f_0]$ in $\scal\lcal(X,L; \Lambda_1,\dots, \Lambda_d)$ is identified with a neighborhood of $F^{-1}(0)$.

By Proposition~\ref{prop:HWB-ellipticity-of-SLAG-operator}, $d_0F$ is surjective and has a finite-dimensional kernel. By the implicit function theorem, $F^{-1}(0)$ is a smooth manifold in a small neighborhood of the origin, and so $\scal\lcal(X,L;\Lambda_1,\dots, \Lambda_d)$ is a smooth manifold in a small neighborhood of the given immersed special Lagrangian $[f_0]$. The tangent space to $\scal\lcal(X,L;\Lambda_1,\dots, \Lambda_d)$ at $[f_0]$ is identified with $\ker d_0F$, the space of harmonic 1-fields with Dirichlet boundary conditions.

By Corollary \ref{cor:harmonicfieldsvanishingontheboundaryandcohomology}, this space of tangent fields is naturally isomorphic to the relative cohomology $H^1(L, \partial L; \RR)$. Finally by the Poincar\'e--Lefschetz duality for relative cohomology \cite[Theorem 3.43]{Hatcher-AT-MR1867354} and isomorphisms between a vector space and its dual, 
\[H^1(L, \partial L; \RR) \cong H^{n-k}(L; \RR). \]
Conclude that $\scal\lcal(X,L; \Lambda_1,\dots, \Lambda_d)$ is a finite-dimensional smooth manifold, whose dimension is $m = \dim H^1(L, \partial L; \RR) = \dim H^{n-k}(L; \RR) $.
\end{proof}

\section{Generalizations}
\label{sec:HWB-generalizations}

This section discusses generalizations of Proposition~\ref{prop:HWB-tangentspacetospeciallagrangianswithboundary-1}, and Theorem~\ref{thm:HWB-tangentspacetospeciallagrangianswithboundary-2} beyond Ricci flat Calabi--Yau manifolds $(X,\omega, J ,\Omega)$ from Definition~\ref{def:HWB-calabi-Yau-manifold}.

\begin{definition}
    An \textit{almost Calabi--Yau manifold} is a K\"ahler manifold $(X, \omega, J)$ with a trivial canonical bundle $K_X$. Since $K_X = \Lambda^{(n,0)}T^*X$ is trivial, $X$ admits a a nowhere vanishing $(n,0)$-form $\Omega$. There exists a smooth function $\rho:X \rightarrow (0,\infty)$ such that
    \begin{equation}
        \label{eq:HWB-almost-Calabi-Yau-equation}
        (-1)^{\frac{n(n-1)}{2}} \left( \frac{\sqrt{-1}}{2}\right)^n \Omega \wedge \overline{\Omega} = \rho^2 \frac{1}{n!} \omega^n.
    \end{equation}
    Compare \eqref{eq:HWB-almost-Calabi-Yau-equation} with \eqref{eq:HWB-calabi-Yau-equation}. If $\omega$ is in addition Ricci flat, then $\rho\equiv 1$ and $(X,\omega, J, \Omega)$ would be a Calabi--Yau manifold.
\end{definition}

On an almost Calabi--Yau manifold, special Lagrangians are not minimal surfaces for the Riemannian metric $g$. It turns out that special Lagrangians are minimal surfaces for the conformal metric
\begin{equation}
    \label{eq:HWB-almost-CY-conformal-metric}
    \widetilde{g} = \rho^{\frac{-2}{n}} g.
\end{equation}
Consider the following generalization of Lemma~\ref{lemma:HWB-harveylawson-calibration-lemma} \cite[Proposition 4.5]{joyce-MR2167283}.

\begin{lemma}
    \label{lemma:HWB-harlaw-for-almost-CY}
    Let $(X,\omega, J, \Omega)$ be an $n$-dim almost Calabi--Yau manifold satisfying \eqref{eq:HWB-almost-Calabi-Yau-equation} for a smooth function $\rho:X \rightarrow (0,\infty)$. Let $L$ be $n$-dim real manifold, and $f:L \rightarrow X$ be a special Lagrangian immersion, i.e., satisfying $f^* \omega \equiv 0$ and $f^* \Im(\Omega) \equiv 0$. Then
    \[f^* \Re(\Omega) \vol_{f^* \widetilde{g}} \]
    where $\widetilde{g}$ is defined by \eqref{eq:HWB-almost-CY-conformal-metric}. In particular $f(L)$ is calibrated by $\Re(\Omega)$ and hence is a minimal submanifold of the Riemannian manifold $(X,\widetilde{g})$.
\end{lemma}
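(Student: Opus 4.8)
The plan is to reduce the identity $f^*\Re(\Omega)=\vol_{f^*\widetilde g}$ to a pointwise statement of linear algebra on each tangent plane, and then to import the genuine Calabi--Yau calculation of Lemma~\ref{lemma:HWB-harveylawson-calibration-lemma} after rescaling $\Omega$. Both sides are $n$-forms on $L$, so it suffices to prove the equality fiberwise at an arbitrary $p\in L$. Write $V:=df_p(T_pL)\subset T_{f(p)}X$; since $f^*\omega=0$ the subspace $V$ is Lagrangian, and since $\rho(f(p))>0$ the rescaled form $\Omega_0:=\rho^{-1}\Omega$ is an $(n,0)$-form on the Hermitian vector space $(T_{f(p)}X,J,\omega)$ which is normalized exactly as in the genuine Calabi--Yau equation \eqref{eq:HWB-calabi-Yau-equation}: dividing \eqref{eq:HWB-almost-Calabi-Yau-equation} by $\rho^2$ cancels the conformal factor. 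Crucially, because $\rho>0$, the condition $f^*\Im(\Omega)=0$ is equivalent to $\Im(\Omega_0)|_V=0$, so $V$ is a special Lagrangian plane for the \emph{normalized} datum $(\omega,J,\Omega_0)$, to which the purely linear-algebraic content of Harvey--Lawson applies.

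The computation itself would proceed by choosing a unitary coframe $\zeta_1,\dots,\zeta_n$ of $T^*_{f(p)}X$ with $\omega=\tfrac{\sqrt{-1}}{2}\sum_j\zeta_j\wedge\bar\zeta_j$ and adapted to $V$, meaning $V$ is the common kernel of $\Im(\zeta_1),\dots,\Im(\zeta_n)$; this is possible precisely because $V$ is Lagrangian. Then $\Re(\zeta_1),\dots,\Re(\zeta_n)$ restrict to an orthonormal coframe of $(V,g|_V)$, so that $\zeta_1\wedge\cdots\wedge\zeta_n|_V=\vol_{g|_V}$ for the induced orientation. Writing $\Omega_0=h\,\zeta_1\wedge\cdots\wedge\zeta_n$, the standard normalization identity for unitary coframes together with \eqref{eq:HWB-calabi-Yau-equation} forces $|h|=1$, and the special Lagrangian condition $\Im(\Omega_0)|_V=0$ then forces $h=\pm1$, with $h=+1$ for the correct orientation. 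Hence $\Re(\Omega_0)|_V=\vol_{g|_V}$ (this is exactly the fiberwise statement underlying Lemma~\ref{lemma:HWB-harveylawson-calibration-lemma}), and multiplying by $\rho$ yields the key pointwise formula $\Re(\Omega)|_V=\rho\,\vol_{g|_V}$, i.e. $f^*\Re(\Omega)=\rho\,\vol_{f^*g}$. Minimality of $f(L)$ in $(X,\widetilde g)$ then follows from the general principle that a calibrated submanifold is homologically volume minimizing, hence minimal, once the calibration identity itself is in hand.

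The one genuinely delicate step is the conformal bookkeeping relating $\vol_{f^*\widetilde g}$ to $\vol_{f^*g}$. For a conformal change $\widetilde g=\lambda^2 g$ on an $n$-dimensional manifold one has $\vol_{\widetilde g}=\lambda^{n}\vol_{g}$, with $\lambda^2$ read off from the weight in \eqref{eq:HWB-almost-CY-conformal-metric}; combined with the pointwise formula $f^*\Re(\Omega)=\rho\,\vol_{f^*g}$, the lemma reduces to verifying that this weight produces exactly $\lambda^{n}=\rho$, so that $\vol_{f^*\widetilde g}=\rho\,\vol_{f^*g}$. I expect the main place to slip is tracking the exponent of $\rho$ (equivalently the sign of the conformal weight) consistently through the normalization $|h|=1$ and the volume scaling at the same time; a clean way to avoid errors is to carry out the whole calculation first in the flat model $\RR^n\subset\CC^n$ with $\Omega=\rho\,dz_1\wedge\cdots\wedge dz_n$, where every constant is explicit, and then transport it to $(T_{f(p)}X,g,J)$ via the adapted unitary coframe.
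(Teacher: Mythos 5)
Your strategy is correct and is essentially the standard argument behind the result the paper cites (Joyce's Proposition~4.5); the paper itself supplies no proof of Lemma~\ref{lemma:HWB-harlaw-for-almost-CY}, only the reference, so your fiberwise reduction is a genuine addition. The pointwise linear algebra is sound: normalizing $\Omega_0=\rho^{-1}\Omega$ cancels the $\rho^2$ in \eqref{eq:HWB-almost-Calabi-Yau-equation}, the adapted unitary coframe gives $|h|=1$, and the special Lagrangian condition forces $h=1$, yielding $f^*\Re(\Omega)=\rho\,\vol_{f^*g}$. However, the one step you explicitly defer --- checking that the conformal weight in \eqref{eq:HWB-almost-CY-conformal-metric} produces $\lambda^n=\rho$ --- does not close as the paper is written: with $\widetilde g=\rho^{-2/n}g$ one gets $\vol_{f^*\widetilde g}=\rho^{-1}\vol_{f^*g}$, not $\rho\,\vol_{f^*g}$. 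The identity $f^*\Re(\Omega)=\vol_{f^*\widetilde g}$ requires $\widetilde g=\rho^{+2/n}g$, which is what Joyce's convention $\widetilde g=\psi^2 g$ with $\psi^{2n}=\rho^2$ gives; the exponent in \eqref{eq:HWB-almost-CY-conformal-metric} appears to carry a sign error. You should carry the verification through rather than leave it as a reduction, since doing so is precisely what detects this discrepancy; once the exponent is corrected your argument is complete, and the concluding minimality statement via the local volume-minimizing property of calibrated immersions is standard.
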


Proposition~\ref{prop:HWB-tangentspacetospeciallagrangianswithboundary-1} and Theorem~\ref{thm:HWB-tangentspacetospeciallagrangianswithboundary-2} generalize to almost Calabi--Yau manifolds in a straight forward manner after replacing the pullback Riemannian metric $f^*g$ by the conformal metric $f^* \widetilde{g}$ defined by \eqref{eq:HWB-almost-CY-conformal-metric}. The proofs follow the same structure while keeping track of additional multiplicative coefficients in terms of $\rho$. In the proof of Lemma~\ref{lemma:HWB-phitishodgestarofthetat}, specifically in the proof of  Claim~\ref{claim:HWB-phi-hodgestar-of-theta-claim2}, Lemma~\ref{lemma:HWB-harveylawson-calibration-lemma} is replaced by Lemma~\ref{lemma:HWB-harlaw-for-almost-CY}.

\begin{appendices}

\section{Proof of Corollary~\ref{cor:HWB-boundary-lagrangian-deformation-space}(ii)}
\label{subsec:HWB-appendix-proof-of-converse}

\begin{proof}[Proof of Corollary~\ref{cor:HWB-boundary-lagrangian-deformation-space}(ii)]
    Recall that $L$ is a compact manifold with boundary compoennts $C_1,\dots, C_d$, and $\Lambda_1,\dots, \Lambda_d \subset X$ are given Lagrangian submanifolds, and $f:L \rightarrow X$ is a smooth Lagrangian immersion with boundary conditions $\Lambda_1,\dots, \Lambda_d$ (Definition~\ref{def:HWB-boundary-conditions-on-immersions-and-lagrangians}). The goal is to prove that any immersion $f_1:L \rightarrow X$ with boundary conditions $\Lambda_1,\dots, \Lambda_d$ that is sufficiently close to $f$ in the $C^1$ topology can be reparametrized to be of the form $f_{\theta}$ for some 1-form $\theta$.
    
    Recall from Proposition~\ref{prop:HWB-boundary-Lagrangian-vector-bundle-connection} the vector subbundle $E \rightarrow L$ of $f^*TX \rightarrow L$ and the bundle isomorphism
    \[\Phi:E \xrightarrow[]{\sim} T^*L, \qquad \Phi(V) = f^*i_V\omega. \]
    Let $\nabla^\Lambda$ be as in Proposition~\ref{prop:HWB-boundary-Lagrangian-vector-bundle-connection-2}, and  $\exp^\Lambda$ denote its exponential map. Define
    \begin{equation}
        \label{eq:HWB-appendix-converse-Psi-def}
        \Psi:T^*L \rightarrow X, \qquad \Psi(p,\theta) = \exp^\Lambda_{f(p)}(\Phi^{-1} \theta) \qquad \theta\in T_p^*L.
    \end{equation}
    Since $f$ is an immersion, $\Psi$ is a free immersion on a neighborhood of the zero section.
    \begin{claim}~
    \label{claim:HWB-appendix-converse-claim1}
    There exists a tubular neighborhood of the zero section $\vcal \subset T^*L$ satisfying the following
    \begin{enumerate}[label={(\roman*)}]
        \item $\Psi:\vcal \rightarrow X$ is an immersion and hence a local diffeomorphism onto its image.
        \item $\Psi(\vcal \cap \nu_{C_i}) \subset \Lambda_i$, where for each boundary component $C_i \subset \partial L$, $\nu_{C_i}$ denotes the conormal bundle
        \[ \nu_{C_i} := \{(p,\theta) \in T^*L: p\in C_i\ \&\  \theta(T_pC_i) = 0 \}. \]
        After further shrinking $\vcal$ if necessary, $(\Pi|_{\vcal})^{-1}(\lambda_i) \subset \nu_{C_i}$ for $i=1,\dots, d$.
    \end{enumerate}
         
    \end{claim}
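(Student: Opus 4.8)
The plan is to verify (i) by differentiating $\Psi$ along the zero section, and to obtain both directions of (ii) from property (e) of Proposition~\ref{prop:HWB-boundary-Lagrangian-vector-bundle-connection} together with the fact that each $\Lambda_i$ is totally geodesic for $\nabla^\Lambda$. For (i), observe first that $\exp^\Lambda_{f(p)}(0)=f(p)$, so $\Psi$ restricts to $f$ on the zero section. Fix a zero-section point $(p,0)$ and split $T_{(p,0)}(T^*L)\cong T_pL\oplus T^*_pL$ into horizontal and vertical parts. Differentiating $\Psi$ in the horizontal directions recovers $df_p$, with image the Lagrangian plane $df(T_pL)$, while differentiating in the vertical directions gives $\theta\mapsto\Phi^{-1}\theta\in E_p$, because the differential of $\exp^\Lambda_{f(p)}$ at the origin is the identity. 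By Proposition~\ref{prop:HWB-boundary-Lagrangian-vector-bundle-connection}(b) these images intersect only in $\{0\}$, and since $\dim E_p+\dim df(T_pL)=2n=\dim_{\RR}X$ the differential $d\Psi_{(p,0)}$ is an isomorphism. Thus $\Psi$ is a local diffeomorphism at every point of the compact zero section, and as this is an open condition there is a tubular neighborhood $\vcal$ on which $\Psi$ is an immersion, hence a local diffeomorphism onto its image.

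For (ii) I would prove the sharp statement that, for $p\in C_i$ and $\theta$ small, $\Psi(p,\theta)\in\Lambda_i$ if and only if $(p,\theta)\in\nu_{C_i}$. Since $\Lambda_i$ is a geodesic submanifold for $\nabla^\Lambda$ (Proposition~\ref{prop:HWB-boundary-Lagrangian-vector-bundle-connection-2}) and $f(p)\in\Lambda_i$ by the boundary conditions, the local diffeomorphism $\exp^\Lambda_{f(p)}$ carries a small ball in $T_{f(p)}\Lambda_i$ onto a neighborhood of $f(p)$ in $\Lambda_i$; hence for small $v$ one has $\exp^\Lambda_{f(p)}(v)\in\Lambda_i$ precisely when $v\in T_{f(p)}\Lambda_i$. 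Taking $v=\Phi^{-1}\theta$ and invoking the equivalence $\theta|_{T_pC_i}\equiv 0\iff\Phi^{-1}\theta\in T_{f(p)}\Lambda_i$ from property (e) of Proposition~\ref{prop:HWB-boundary-Lagrangian-vector-bundle-connection} yields the asserted equivalence; the forward inclusion $\Psi(\vcal\cap\nu_{C_i})\subset\Lambda_i$ is its ``if'' half.

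The remaining content of the converse inclusion $(\Psi|_{\vcal})^{-1}(\Lambda_i)\subset\nu_{C_i}$ is to exclude $(p,\theta)\in\vcal$ with $\Psi(p,\theta)\in\Lambda_i$ but $p\notin C_i$. If $p$ lies in another component $C_j$, then $\Psi(p,\theta)$ lies near $\Lambda_j$, which is disjoint from the compact $\Lambda_i$, so this is removed by shrinking $\vcal$; likewise any interior $p$ with $f(p)\notin\Lambda_i$ is removed by continuity after shrinking. I expect the one genuinely delicate case to be an interior point $p$ with $f(p)\in\Lambda_i$: the clean inclusion really requires $f^{-1}(\Lambda_i)=C_i$, and the transversality hypothesis $df(T_pL)\not\subset T_{f(p)}\Lambda_i$ of Definition~\ref{def:HWB-boundary-conditions-on-immersions-and-lagrangians} secures this only locally along $C_i$ (there $f$ leaves $\Lambda_i$ in the inward normal direction). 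In any case the inclusion holds on a neighborhood of $T^*L|_{\partial L}$ inside $\vcal$, which is all that the subsequent reparametrization argument needs.
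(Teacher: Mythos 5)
Your proof of (i) is the same as the paper's: split $T_{(p,0)}T^*L\cong T_pL\oplus T^*_pL$, compute $d\Psi_{(p,0)}$ as $df_p$ on the horizontal factor and as $\Phi^{-1}$ on the vertical factor (the differential of $\exp^\Lambda$ at the origin being the identity), and conclude injectivity from $E_p\cap df(T_pL)=\{0\}$ together with compactness of the zero section. The forward inclusion $\Psi(\vcal\cap\nu_{C_i})\subset\Lambda_i$ is likewise proved exactly as in the paper, from Proposition~\ref{prop:HWB-boundary-Lagrangian-vector-bundle-connection}(e) and the fact that $\Lambda_i$ is geodesic for $\nabla^\Lambda$.

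You diverge on the converse inclusion $(\Psi|_{\vcal})^{-1}(\Lambda_i)\subset\nu_{C_i}$. The paper argues that $\Psi|_{\vcal\cap\nu_{C_i}}$ is an immersion into $\Lambda_i$ between manifolds of equal dimension, hence a local diffeomorphism onto an open subset of $\Lambda_i$, and then ``shrinks $\vcal$'' so that $\Psi(\vcal)\cap\Lambda_i$ equals this image. Your pointwise statement over $C_i$ (local injectivity of $\exp^\Lambda_{f(p)}$ combined with the equivalence in Proposition~\ref{prop:HWB-boundary-Lagrangian-vector-bundle-connection}(e)) is sharper and cleaner where it applies. More importantly, you are right to flag the interior points: if $f(q)\in\Lambda_i$ for some $q$ in the interior of $L$, then $(q,0)$ lies in \emph{every} tubular neighborhood of the zero section and $\Psi(q,0)=f(q)\in\Lambda_i$ while $(q,0)\notin\nu_{C_i}$, so the stated inclusion fails and no shrinking can repair it; Definition~\ref{def:HWB-boundary-conditions-on-immersions-and-lagrangians} does not exclude this configuration, and the paper's shrinking step silently assumes $f^{-1}(\Lambda_i)=C_i$. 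Your fallback --- that the inclusion does hold on $\pi^{-1}$ of a neighborhood of $\partial L$, which is all that Claim~\ref{claim:HWB-appendix-converse-claim2} needs provided the boundary charts $U_\alpha$ are chosen inside that neighborhood --- is the correct repair. So your proposal matches the paper on (i) and on the forward half of (ii), and on the converse half it is both more careful and correctly identifies a genuine gap in the statement as written.
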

    \begin{proof} $\Psi$ is an immersion by the standard argument computing the linearization of the exponential map. Part (ii) follows from Proposition~\ref{prop:HWB-boundary-Lagrangian-vector-bundle-connection}.
    \begin{enumerate}[label={(\roman*)}]
        \item Fix a generic point on the zero section, say $(p,0) \in T^*L$. It is sufficient to prove that $d\Psi_{(p,0)}$ is injective on $T_{(p,0)}T^*L$ for each $p\in L$.
        
        Recall the natural decomposition $T_{(p,0)} T^*L = T_pL \oplus T^*_pL$. Since $\Psi(p,0) = f(p)$ for all $p\in L$, 
        \begin{equation}
           \label{eq:HWB-appendix-converse-claim1-eq1}
           d\Psi_{(p,0)}(Y) = df_p(Y) \in df(T_pL) \qquad \forall Y\in T_pL \subset T_{(p,0)}T^*L.
        \end{equation}
        And $df_p:T_pL \rightarrow T_pX$ is injective since $f$ is an immersion by assumption. Next differentiate $\Psi$ at $(p,0)$ along the fiber directions $E_p$. Fix $\theta \in T^*_pL$. Then since $\exp^\Lambda$ is a Riemannian exponential map,
        \begin{equation}
            \label{eq:HWB-appendix-converse-claim1-eq2}
            \begin{aligned}
            d\Psi_{(p,0)}(\theta) =&\ \left. \frac{d}{dt} \right|_{t=0} \Psi(p,t\theta) \\
            =&\ \left. \frac{d}{dt} \right|_{t=0} \exp_{f(p)}(t\Phi^{-1} \theta) \\
            =&\ \Phi^{-1}\theta \in T^*_pL.
        \end{aligned}
        \end{equation}
        Since $\Phi$ is a bundle isomorphism, $d\Psi_{(0,p)}$ is injective on $T^*_pL \subset T_{(0,p)}T^*L$. By \eqref{eq:HWB-appendix-converse-claim1-eq1} and \eqref{eq:HWB-appendix-converse-claim1-eq2}, $d\Psi_{(p,0)}$ is the sum of two injective maps $T_pL \oplus T^*_pL \rightarrow df(T_pL) + E_p$. Furthermore by Proposition~\ref{prop:HWB-boundary-Lagrangian-vector-bundle-connection}.1(a), $E_p$ is transverse to $df(T_pL)$ for each $p\in L$. Thus $d\Psi_{(p,0)}$ is injective.
        
        Since $p\in L$ was arbitrary, $d\Psi$ is injective along the zero section. By compactness of $L$, $\Psi$ is an immersion on a small neighborhood of the zero section in $T^*L$. Let $\vcal$ denote this neighborhood.

        \item By Proposition~\ref{prop:HWB-boundary-Lagrangian-vector-bundle-connection}.1(e),
        \[\Phi ^{-1} \theta \in T_{f(p)} \Lambda_i \cap E_p \qquad \forall\ (p,\theta) \in \nu_{C_i}.\]
        By Proposition~\ref{prop:HWB-boundary-Lagrangian-vector-bundle-connection-2}, $\Lambda_i$ is a geodesic submanifold for $\nabla^\Lambda$ and so
        \[\exp^\Lambda_{f(p)}( T_{ f(p ) } \Lambda_i ) \subset \Lambda_i, \qquad p\in C_i,\ i=1,\dots, d\]
        Conclude from \eqref{eq:HWB-appendix-converse-Psi-def} that
        \[\Psi(\nu_{C_i}) \subset \Lambda_i.\]
        But $\Psi|_{\vcal}$ is an immersion by (i), and so is $\Psi|_{\vcal \cap \nu_{C_i}}$ is an immersion onto its image in $\Lambda_i$. Furthermore \[\dim \vcal \cap \nu_{C_i} = \dim C_i + 1 = \dim \lambda_i.\]
        Thus $\Psi|_{\vcal \cap\nu_{C_i}}: \vcal \cap \nu_{C_i} \rightarrow \Psi(\vcal) \cap \Lambda_i$ is a local diffeomorphism onto its image for $i=1,\dots, d$. Shrink $\vcal$ to ensure $\vcal \cap \Lambda_i$  equals this image for $i=1,\dots, d$ to conclude $(\Psi|_{\vcal})^{-1}(\Lambda_i) \subset \nu_{C_i}$. \qedhere
        \end{enumerate}
        \end{proof}

    \begin{figure}
        \centering
        \begin{tikzcd}
            T^*L \supset \vcal \quad \qquad  \arrow[d, "\pi"] \arrow[dr, "\Psi"] & \\
             L \arrow[r, "f"] \arrow[r, bend right, "f_1"] \arrow[u, bend left, "\widetilde{f_1}"]& X
        \end{tikzcd}
        \caption{$f_1:L \rightarrow X$ is an immersion in the neighborhood $\ical \subset  C^\infty(L,X; \Lambda_1,\dots, \Lambda_d)$ of $f$, and $\widetilde{f_1}$ is a lifting of this immersion.}
        \label{fig:HWB-appendix-lifting-diagram}
    \end{figure}
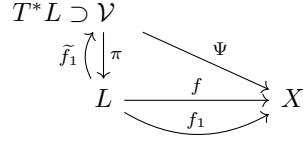
    Fix the neighborhood $\vcal \subset T^*L$ of the zero section from Claim~\ref{claim:HWB-appendix-converse-claim1}. The next step is to identify a $C^1$ neighborhood $\ical$ of $f$ in $C^\infty(L,X; \Lambda_1,\dots, \Lambda_d)$. If $\|f_1 - f\|_{C^1} <<1$ then by compactness of $L$ $f_1$ must also be an immersion, so it is sufficient to pick $\ical \subset \Imm(L,X) \cap C^\infty(L,X; \Lambda_1,\dots,\Lambda_d)$. Start by choosing a pair of open covers $\{W_\alpha: \alpha \in I\}$ and $\{ U_\alpha : \alpha \in I\}$ of $L$ satisfying the following.
    \begin{enumerate}
        \item $W_\alpha \subset \overline{W_\alpha} \subset U_\alpha \subset L$, and $W_\alpha$ is an open cover of $L$.
        \item By compactness of $L$, $\overline{W_\alpha}$ is compact. Furthermore assume that the indexing set $I$ is finite.
        \item $W_\alpha $ and $U_\alpha$ are either interior coordinate neighborhoods in $L$, or are boundary coordinate charts.
        \item $f|_{U_{\alpha}}$ is injective for each $\alpha \in I$, and after possibly shrinking $\vcal$, \[\Psi_\alpha := \Psi|_{\vcal \cap \pi^{-1} U_\alpha}: \vcal \cap \pi^{-1}U_\alpha \rightarrow X \]
        is a diffeomorphism for each $\alpha \in I$.
    \end{enumerate}
     Let $\ical \subset \Imm(L,X)\cap C^\infty (L,X; \Lambda_1,\dots, \Lambda_d)$ denote the open subset of immersions satisfying
    \begin{equation}
        \label{eq:HWB-appendix-converse-eq3}
        f_1(\overline{W_\alpha} \cap \overline{W}_\beta ) \subset \Psi(\vcal \cap \pi^{-1}U_\alpha \cap\pi^{-1}U_\beta) \qquad \forall\ \alpha, \beta \in I .
    \end{equation}
    Note that $\ical$ is an open subset of all $C^\infty(L,X; \Lambda_1,\dots, \Lambda_d)$ since \eqref{eq:HWB-appendix-converse-eq3} is an open condition, and $\ical$ contains a $C^0$ neighborhood of $f$ (and hence a $C^1$ neighborhood). 

    \begin{claim}
    \label{claim:HWB-appendix-converse-claim2}
        For each $f_1 \in \ical$ there exists a lifting $\widetilde{f_1}:L \rightarrow \vcal$ such that $\Psi\circ \widetilde{f_1} = f$ (see Figure~\ref{fig:HWB-appendix-lifting-diagram}). The lifting $\widetilde{f_1}$ is an immersions and $\widetilde{f_1}(C_i) \subset \nu_{C_i}$ for $i=1,\dots, d$.
    \end{claim}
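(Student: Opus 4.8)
The plan is to construct $\widetilde{f_1}$ by patching together local inverses of $\Psi$. By Claim~\ref{claim:HWB-appendix-converse-claim1}(i) the map $\Psi$ is an immersion of $\vcal \subset T^*L$ into $X$, and since $\dim \vcal = \dim T^*L = 2n = \dim_\RR X$ it is in fact a local diffeomorphism; by property~(4) of the chosen cover each restriction $\Psi_\alpha := \Psi|_{\vcal \cap \pi^{-1}U_\alpha}$ is a diffeomorphism onto its image. First I would define the local lift
\[ \widetilde{f_1}^\alpha := \Psi_\alpha^{-1} \circ f_1 : W_\alpha \longrightarrow \vcal, \qquad \alpha \in I, \]
which makes sense because taking $\beta = \alpha$ in \eqref{eq:HWB-appendix-converse-eq3} forces $f_1(\overline{W_\alpha}) \subset \Psi(\vcal \cap \pi^{-1}U_\alpha)$, the image of $\Psi_\alpha$. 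By construction $\Psi \circ \widetilde{f_1}^\alpha = f_1$ on $W_\alpha$.

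The main obstacle is showing that these local lifts agree on overlaps, and this is exactly where the definition of $\ical$ via \eqref{eq:HWB-appendix-converse-eq3} is used (the global injectivity of $\Psi$ fails, so one cannot simply invert it). Fix $p \in W_\alpha \cap W_\beta$. The overlap condition \eqref{eq:HWB-appendix-converse-eq3} furnishes a point $q \in \vcal \cap \pi^{-1}(U_\alpha \cap U_\beta)$ with $\Psi(q) = f_1(p)$. Both $q$ and $\widetilde{f_1}^\alpha(p)$ lie in $\vcal \cap \pi^{-1}U_\alpha$ and have image $f_1(p)$ under $\Psi$, so injectivity of $\Psi_\alpha$ gives $q = \widetilde{f_1}^\alpha(p)$; the same reasoning over $U_\beta$ gives $q = \widetilde{f_1}^\beta(p)$. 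Hence $\widetilde{f_1}^\alpha = \widetilde{f_1}^\beta$ on $W_\alpha \cap W_\beta$, and the local lifts glue to a single smooth map $\widetilde{f_1}:L \to \vcal$ satisfying $\Psi \circ \widetilde{f_1} = f_1$. This is the analog for manifolds with boundary of the lifting step in Cervera--Mascar\'o--Michor \cite[Theorem~1.5]{Cervera-Mascaro-MichorMR1244452}.

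Finally I would check the two remaining assertions. Differentiating $f_1 = \Psi \circ \widetilde{f_1}$ and using that $d\Psi$ is a fiberwise isomorphism yields $d\widetilde{f_1} = (d\Psi)^{-1}\circ df_1$, which is injective because $f_1$ is an immersion; thus $\widetilde{f_1}$ is an immersion. For the boundary condition, fix $p \in C_i$. Since $f_1$ satisfies the boundary conditions $\Lambda_1,\dots,\Lambda_d$ we have $f_1(p) \in \Lambda_i$, so $\widetilde{f_1}(p) \in \vcal$ satisfies $\Psi(\widetilde{f_1}(p)) \in \Lambda_i$, i.e.\ $\widetilde{f_1}(p) \in (\Psi|_{\vcal})^{-1}(\Lambda_i)$. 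By Claim~\ref{claim:HWB-appendix-converse-claim1}(ii) this preimage is contained in $\nu_{C_i}$, whence $\widetilde{f_1}(C_i) \subset \nu_{C_i}$, as required.
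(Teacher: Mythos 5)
Your proposal is correct and follows essentially the same route as the paper: define the lift locally as $\bigl(\Psi|_{\vcal\cap\pi^{-1}U_\alpha}\bigr)^{-1}\circ f_1$ on $W_\alpha$, use condition \eqref{eq:HWB-appendix-converse-eq3} together with injectivity of $\Psi$ over $\pi^{-1}(U_\alpha\cap U_\beta)$ to glue on overlaps, and deduce the immersion property and $\widetilde{f_1}(C_i)\subset \nu_{C_i}$ exactly as the paper does via Claim~\ref{claim:HWB-appendix-converse-claim1}(ii). Your overlap argument is in fact slightly more explicit than the paper's, which is fine.
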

    The zero section $L \rightarrow \vcal \subset T^*L$ is a lifting of the given immersion $f:L \rightarrow X$. Claim~\ref{claim:HWB-appendix-converse-claim2} identifies a section lifting any immersion in the neighborhood $\ical$ of $f$. The definition of $\ical$ from \eqref{eq:HWB-appendix-converse-eq3} differs slightly from that in Cervera--Mascar\'o--Michor \cite[Theorem 1.5]{Cervera-Mascaro-MichorMR1244452}. This choice is necessary for the lift \eqref{eq:HWB-appendix-claim2-lift-def} to be well defined. 
    \begin{proof}
         Recall the open cover $W_\alpha$ and $U_\alpha$ of $L$. Define $\widetilde{f_1}:L \rightarrow \vcal$ by
        \begin{equation}
            \label{eq:HWB-appendix-claim2-lift-def}
            \widetilde{f_1}(p):= \left( \Psi|_{\vcal \cap \pi^{-1} U_\alpha} \right)^{-1} (f_1(p)) \qquad \forall\ p\in W_\alpha.
        \end{equation}
        
        \begin{itemize}
        \item $\widetilde{f_1}$ is well defined: If $p \in W_\alpha \cap W_\beta$ then $f_1(p) \in \Psi(\vcal \cap \pi^{-1}(U_\alpha) \cap \pi^{-1}(U_\beta))$. But $\Psi|_{\vcal \cap \pi^{-1}U_\alpha \cap \pi^{-1} U_\beta}$ is a diffeomorphism, so
        \[\left( \Psi|_{\vcal \cap \pi^{-1} U_\alpha} \right)^{-1} (f_1(p)) = \left( \Psi|_{\vcal \cap \pi^{-1} U_\beta} \right)^{-1} (f_1(p)).\]
        \item $\widetilde{f_1}$ is an immersion: By \eqref{eq:HWB-appendix-claim2-lift-def}, $\widetilde{f_1}|_{W_{\alpha}}$ is the composition of an immersion and a diffeomorphism. Since $(W_{\alpha})_{\alpha \in I}$ covers $L$, $\widetilde{f_1}$ is an immersion.

        \item $\widetilde{f_1}(C_i) \subset \nu_{C_i}$: Note that $f_1(p) \in \Lambda_i$ for each $p\in C_i$ by assumption, and $\Psi^{-1}(\Lambda_i) \subset \nu_{C_1}$ by Claim~\ref{claim:HWB-appendix-converse-claim1}. So $\widetilde{f_1}(p) \in \nu_{C_i}$ for every $p\in C_i$, $i=1,\dots, d$. \qedhere
        \end{itemize}
    \end{proof}

    The final step is shrink the neighborhood $\ical$---defined by \eqref{eq:HWB-appendix-converse-eq3}---to ensure that the lifting constructed in Claim~\ref{claim:HWB-appendix-converse-claim2} can be reparametrized to be of the form $f_{\theta}$ for some 1-form $\theta$. The following lemma is a general statement of about openness of relative homotopy classes in the space of smooth maps.

    \begin{lemma}
    \label{lemma:HWB-appendix-homotopy-class-open}
        Let $f:(M,\partial M) \rightarrow (N, \partial N)$ be a smooth boundary preserving map of compact manifolds. Then the homotopy class of $f$ is an open subset of the space of smooth boundary preserving maps $C^\infty ((M,\partial M), (N, \partial N))$,
    \end{lemma}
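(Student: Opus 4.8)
The plan is to produce a concrete homotopy between any two boundary-preserving maps that are $C^0$-close, by joining their values along short minimizing geodesics, after first fixing a metric on $N$ that is adapted to the boundary. Since being boundary-preservingly homotopic is an equivalence relation, it suffices to show that each $f$ has a $C^0$-neighborhood consisting of maps homotopic to $f$ through boundary-preserving maps; openness of the homotopy class follows, and passes to the finer $C^1$ and $C^\infty$ topologies automatically.

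First I would choose a Riemannian metric $h$ on $N$ that is a Riemannian product $h = h_{\partial N} + dt^2$ on a collar $\partial N \times [0,\delta)$, where $t$ denotes the distance to $\partial N$, extended arbitrarily over the interior. With this choice $\partial N = \{t=0\}$ is totally geodesic, and inside the collar the $t$-coordinate of any geodesic is an affine function of its arclength parameter. To have honest short geodesics near $\partial N$, I would first enlarge the collar to $\partial N \times (-\delta,\delta)$ with the same product metric, obtaining an open manifold $\widehat N \supset N$ without boundary containing $N$ as a compact codimension-zero submanifold.

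Next, by Whitehead's theorem on totally normal (geodesically convex) neighborhoods together with compactness of $N$, I would extract a uniform $\epsilon > 0$ such that any $p,q \in \widehat N$ lying in the compact region $N$ with $d(p,q) < \epsilon$ are joined by a unique minimizing geodesic $\gamma_{p,q} : [0,1] \to \widehat N$ depending smoothly on $(p,q)$. Two boundary properties then hold after shrinking $\epsilon$: (a) if $p,q \in \partial N$ then $\gamma_{p,q} \subset \partial N$, because $\partial N$ is totally geodesic, so its intrinsic minimizing geodesic between $p$ and $q$ is also an ambient geodesic and uniqueness forces the two to coincide; and (b) if $p,q \in N$ then $\gamma_{p,q} \subset N$, since away from the boundary this is automatic once $\epsilon$ is below the collar width, while inside the collar the product structure makes the $t$-coordinate of $\gamma_{p,q}$ affine, hence bounded below by $\min(t(p),t(q)) \ge 0$, so the geodesic never leaves $N$. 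Given $f$ and any boundary-preserving $g$ with $\sup_{x \in M} d(f(x),g(x)) < \epsilon$, I would then set
\[
H : M \times [0,1] \to N, \qquad H(x,s) := \gamma_{f(x),\,g(x)}(s).
\]
Smoothness of $\gamma$ in its endpoints makes $H$ smooth, property (b) keeps it in $N$, and $H(\cdot,0)=f$, $H(\cdot,1)=g$. For $x \in \partial M$ both $f(x),g(x)$ lie in $\partial N$, so by (a) the slice $H(x,\cdot)$ stays in $\partial N$; thus each $H(\cdot,s)$ is boundary-preserving and $H$ is a homotopy through boundary-preserving maps.

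I expect the only genuine obstacle to be the boundary bookkeeping in the last step, namely arranging simultaneously that the connecting geodesics (i) remain inside $N$ rather than escaping through the boundary and (ii) keep $\partial M$ mapped into $\partial N$. Both are forced by the product-collar choice of metric; apart from this the argument is the standard fact that $C^0$-close maps are smoothly homotopic via the geodesic homotopy, and the final conclusion that the $C^0$-ball of radius $\epsilon$ about $f$ lies in a single boundary-preserving homotopy class is immediate.
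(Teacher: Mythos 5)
Your argument is correct, but it takes a genuinely different route from the paper's. The paper embeds $N$ into a half-space $\RR^K\times[0,\infty)$ with $\partial N$ landing in $\RR^K\times\{0\}$, takes a tubular neighborhood $T$ of $N$ there, and homotopes a nearby map $f'$ to $f$ by the straight-line homotopy in $T$ followed by the deformation retraction $r:T\to N$; boundary preservation is inherited from the compatibility of the tubular neighborhood with the half-space structure. You instead work intrinsically: you rig a product metric on a collar of $\partial N$, extend across the boundary to get an open manifold $\widehat N$, and use the geodesic homotopy, with the two key boundary facts --- that $\partial N$ is totally geodesic (so geodesics between boundary points stay in $\partial N$) and that the $t$-coordinate of a collar geodesic is affine (so geodesics between points of $N$ stay in $N$) --- both forced by the product structure. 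Your version avoids the Whitney embedding and the tubular neighborhood theorem for manifolds with boundary, at the price of the collar-doubling and uniform-convexity-radius bookkeeping; it also makes the boundary preservation of every intermediate map $H(\cdot,s)$ completely explicit, which the paper's proof leaves somewhat implicit. Both arguments yield a $C^0$-neighborhood of $f$ inside its homotopy class, which as you note suffices since the $C^\infty$ topology is finer.
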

    \begin{proof}
        Assume WLOG that $N\subset H:= \RR^K\times[0,\infty)$ for some large $K>>1$ such that $\partial N = \RR^K\times \{0\} \cap N $ \cite[Theorem~4.3]{Hirsch-diff-top-MR448362}. By the tubular neighborhood theorem for manifolds with boundary \cite[Theorem~6.3, 6.4]{Hirsch-diff-top-MR448362}, $N$ admits a tubular neighborhood $T$ in the half space $\RR^K\times[0,\infty)$. By compactness of $N$, there is some $\epsilon >0$ such that 
        \[\{y \in H: d(y, M)< \epsilon \} \subset T.\]
        Consider the neighborhood of $f$ 
        \[ \ucal := \{ f':(M, \partial M) \rightarrow (N, \partial N): \sup_{x\in M} d_{H}(f(x), f'(x) ) < \epsilon \}.\]
        For any $f'\in \ucal$, $f_t := tf + (1-t)f$ is a smooth homotopy of maps $(M,\partial M) \rightarrow (T,\partial T)$. But $T$ being a tubular neighborhood of $N$ deformation retracts onto $N$, and let $r:T \rightarrow N$ denote this retraction. Then $r\circ f_t : (M, \partial M) \rightarrow (N,\partial N)$ defines a smooth homotopy between $f$ and $f'$. Thus $\ucal$ is the required neighborhood.
    \end{proof}

    Corollary~\ref{cor:HWB-boundary-lagrangian-deformation-space}(ii) now follows from Claim~\ref{claim:HWB-appendix-converse-claim3}.

    \begin{claim}
    \label{claim:HWB-appendix-converse-claim3}
        For each $f_1 \in \ical$ there exists a smooth map $\varphi = \varphi_{f_1} :L \rightarrow L$ and a 1-form $\theta = \theta_{f_1}:L \rightarrow \vcal \subset T^*L$ such that
        \[\widetilde{f_1}(p) = (\varphi(p), \theta_{ \varphi(p)}).\]
        There is an open neighborhood $\ical' \subset \ical$ of $f$, such that $\varphi_{f_1}$ is a diffeomorphism for each $f_1 \in \ical'$. For any such $f_1\in \ical''$ (see \ref{eq:HWB-normal-field-to-deformation-formula-2}), 
        \[f_1 \circ \varphi^{-1}(p) = f_{\theta}.\]
    \end{claim}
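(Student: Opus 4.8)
The plan is to read off $\varphi$ and $\theta$ directly from the lift $\widetilde{f_1}$ produced in Claim~\ref{claim:HWB-appendix-converse-claim2}, and to reduce the entire statement to showing that $\varphi$ is a diffeomorphism. Set $\varphi := \pi\circ \widetilde{f_1}:L\to L$, where $\pi:T^*L\to L$ is the bundle projection. Since $\widetilde{f_1}(C_i)\subset \nu_{C_i}$ by Claim~\ref{claim:HWB-appendix-converse-claim2} and $\pi(\nu_{C_i})\subset C_i$, the map $\varphi$ preserves each boundary component, i.e.\ $\varphi(C_i)\subset C_i$. Once $\varphi$ is known to be a diffeomorphism I would set
\[
    \theta := \widetilde{f_1}\circ \varphi^{-1}: L\longrightarrow \vcal\subset T^*L,
\]
and observe that $\pi\circ\theta = \varphi\circ\varphi^{-1}=\id_L$, so $\theta$ is an honest $1$-form valued in $\vcal$; the decomposition $\widetilde{f_1}(p)=(\varphi(p),\theta_{\varphi(p)})$ then holds tautologically. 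The conormal condition $\widetilde{f_1}(C_i)\subset\nu_{C_i}$ says precisely that $\theta$ annihilates $TC_i$ along each $C_i$, i.e.\ $\theta\in A^1_D(L)$. Finally, since $\Psi\circ\theta = f_\theta$ is immediate from the definitions of $\Psi$ in \eqref{eq:HWB-appendix-converse-Psi-def} and of $f_\theta$ in \eqref{eq:HWB-normal-field-to-deformation-formula-2}, the target identity follows formally:
\[
    f_1\circ\varphi^{-1} = \bigl(\Psi\circ\widetilde{f_1}\bigr)\circ\varphi^{-1} = \Psi\circ\theta = f_\theta.
\]

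So everything reduces to choosing $\ical'\subset\ical$ so that $\varphi_{f_1}$ is a diffeomorphism for each $f_1\in\ical'$. The key observation is that $f_1\mapsto\widetilde{f_1}$ is continuous in the $C^1$ topology: on each $W_\alpha$ the lift is $\widetilde{f_1}|_{W_\alpha}=\bigl(\Psi|_{\vcal\cap\pi^{-1}U_\alpha}\bigr)^{-1}\circ f_1$, that is, $f_1$ composed on the left with a fixed diffeomorphism, and when $f_1=f$ this lift is the zero section, for which $\varphi=\id_L$. Shrinking $\ical$ to a smaller neighborhood $\ical'$ of $f$, for $f_1\in\ical'$ the lift $\widetilde{f_1}$ is $C^1$-close to the zero section, hence $\varphi$ is $C^1$-close to $\id_L$. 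In particular $d\varphi_p$ is invertible for every $p$, so $\varphi$ is a local diffeomorphism carrying interior to interior and, by the paragraph above, $\partial L$ to $\partial L$.

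The remaining, and only substantial, step is to promote this local diffeomorphism to a global one. Here I would use that $\varphi$ is a proper local diffeomorphism of the compact connected manifold $L$ onto itself, hence a finite-sheeted covering map, together with Lemma~\ref{lemma:HWB-appendix-homotopy-class-open}: since $\varphi$ is $C^0$-close to $\id_L$ it lies in the homotopy class of $\id_L$, so the covering is single-sheeted and $\varphi$ is bijective, therefore a diffeomorphism; as it preserves boundary components it lies in $\Diff(L)$. This covering-degree argument is the main obstacle, and the presence of boundary requires mild extra care---one treats $\operatorname{int} L\to\operatorname{int} L$ and $\partial L\to\partial L$ separately (or passes to the double of $L$) to justify that a proper, boundary-preserving local diffeomorphism is a covering. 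Taking $\ical'$ this small then yields $\varphi_{f_1}\in\Diff(L)$ for all $f_1\in\ical'$, completing the proof and exhibiting $f_1 = f_\theta\circ\varphi$ as required by Corollary~\ref{cor:HWB-boundary-lagrangian-deformation-space}(ii).
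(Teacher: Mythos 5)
Your proposal is correct and follows essentially the same route as the paper: read $\varphi = \pi\circ\widetilde{f_1}$ and $\theta$ off the lift from Claim~\ref{claim:HWB-appendix-converse-claim2}, invoke Lemma~\ref{lemma:HWB-appendix-homotopy-class-open} to see that $\varphi$ is homotopic to $\id_L$, and shrink the neighborhood so that $\varphi$ becomes a diffeomorphism, after which the identity $f_1\circ\varphi^{-1}=\Psi\circ\theta=f_\theta$ is formal. The only divergence is that where the paper cites as a known fact that a neighborhood of $\id_L$ in $C^\infty(L,L)$ consists of diffeomorphisms, you supply the standard proof of that fact (a boundary-preserving proper local diffeomorphism is a covering, of degree one since homotopic to the identity); this is a legitimate expansion of a cited step rather than a different argument.
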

    \begin{proof}
        Any element $f_1\in \ical$ is an immersion by the definition of $\ical$. Furthermore $\Psi$ is a local diffeomorphism. The existence of $\varphi_{f_1}$ and $\theta_{f_1}$ follows immediately.
        
        By Lemma~\ref{lemma:HWB-appendix-homotopy-class-open} applied to $f:(L,\partial L) \rightarrow (\overline{\vcal}, \partial \overline{\vcal})$, there is an open neighborhood $\ical'' \subset \ical$ containing $f$ such that for any $f_1\in \ical''$, its lifting $\widetilde{f_1}$ is homotopic to the zero section in $T^*L$ that lifts $f$. Composing this homotopy with the projection $\pi:T^*L \rightarrow L$ obtain a homotopy from $\varphi$ to $\id_L:L \rightarrow L$. There is a neighborhood of $\id_L$ in $C^\infty(L,L)$ in whih every element is a diffeomorphism on $L$. Pulling this neighborhood back to $C^\infty(L,X)$ under $\pi$, obtain a neighborhood $\ical'' \subset \ical$ such that $\pi\circ \widetilde{f_1} = \varphi_{f_1}$ is diffeomorphism for each $f_1\in \ical ''$. Finally for any such $f_1\in \ical''$, by \eqref{eq:HWB-normal-field-to-deformation-formula-2}
        \begin{equation}
            \widetilde{f_1}\circ \varphi^{-1}(p) = (p, \theta_p) \qquad \implies \qquad f-1 \circ \varphi^{-1} = f_{\theta}. \tag*{\qedhere}
        \end{equation}
    \end{proof}
\end{proof}

\end{appendices}

\printbibliography

\textsc{University of Maryland, College Park, Maryland}

\href{mailto:pvasanth@umd.edu}{pvasanth@umd.edu}

\end{document}